\newtheorem{introthm}{Theorem}
\newtheorem{introprop}[introthm]{Proposition}
\newtheorem{prob}[introthm]{Problem}
\newtheorem{thm}{Theorem}[section]
\newtheorem{lem}[thm]{Lemma}
\newtheorem{prop}[thm]{Proposition}
\newtheorem{cor}[thm]{Corollary}
\theoremstyle{definition}
\newtheorem{defi}[thm]{Definition}
\newtheorem{defnot}[thm]{Definition/Notation}
\newtheorem{ex}[thm]{Example}
\newtheorem{introex}[introthm]{Example}
\newtheorem{rem}[thm]{Remark}
\newcommand{\defeq}{\vcentcolon=}
\newcommand{\isom}{\cong}
\newcommand{\rmap}{\dasharrow}
\newcommand{\psmap}{\smash{\xymatrix@C=0.5cm@M=1.5pt{ \ar@{..>}[r]& }}}
\def\dim{\operatorname{dim}}
\def\deg{\operatorname{deg}}
\def\det{\operatorname{det}}
\def\NE{\overline{\operatorname{NE}}}
\def\Exc{\operatorname{Exc}}
\def\Pic{\operatorname{Pic}}
\def\Spec{\operatorname{Spec}}
\def\Bir{\operatorname{Bir}}
\def\Aut{\operatorname{Aut}}
\def\divv{\operatorname{div}}
\def\Amp{\operatorname{Amp}}
\def\disc{\operatorname{disc}}
\def\modd{\operatorname{mod}}
\newcommand{\OO}{\mathcal{O}}
\newcommand{\II}{\mathcal{I}}
\newcommand{\ZZ}{\mathbb{Z}}
\newcommand{\QQ}{\mathbb{Q}}
\newcommand{\RR}{\mathbb{R}}
\newcommand{\CC}{\mathbb{C}}
\newcommand{\PP}{\mathbb{P}}
\definecolor{AGcolor}{RGB}{200,3,3}
\definecolor{olive}{HTML}{3c8031}
\definecolor{myorange}{HTML}{eb811a}
\definecolor{myorchid}{HTML}{9837ba}
\title{On Gizatullin's Problem for quartic surfaces of Picard rank $2$}
\author{Carolina Araujo \and Daniela Paiva \and Sokratis Zikas}
\address{\sc Carolina Araujo\\
	IMPA\\
	Estrada Dona Castorina 110\\
	22460-320 Rio de Janeiro\\ Brazil}
\email{caraujo@impa.br}
\address{\sc Daniela Paiva\\
	IMPA\\
	Estrada Dona Castorina 110\\
	22460-320 Rio de Janeiro\\ Brazil}
\email{da.paiva@impa.br}
\address{\sc Sokratis Zikas\\
	IMPA\\
	Estrada Dona Castorina 110\\
	22460-320 Rio de Janeiro\\ Brazil}
\email{sokratis.zikas@impa.br}
\begin{document}
	
	\maketitle
	
	\begin{abstract}
		In this paper we determine which automorphisms of general smooth quartic surfaces $S\subset \PP^3$ of Picard rank $2$ are restrictions of Cremona transformations of $\PP^3$.
	\end{abstract}
	
	\section*{Introduction}
	In this paper, we address the following question:
	
	\begin{prob}[Gizatullin]\label{G-problem}
		When is a nontrivial automorphism of a smooth quartic surface $S\subset \PP^3$ the restriction of a Cremona transformation of $\PP^3$?
	\end{prob}
	
	\noindent 
	This question has its origin in a classical theorem by Matsumura and Monsky describing automorphisms of smooth hypersurfaces in projective spaces. Let $X_d\subset  \PP^{n+1}$ be a smooth hypersurface of degree $d$. Except for two special cases, namely $(n,d) = (1,3)$ and $(n,d) = (2,4)$, every automorphism of $X_d$ is the restriction of a linear automorphism of the ambient space $\PP^{n+1}$. 
	For $n\geq 2$ this was proved by Matsumura and Monsky \cite{MM64}, and for $n=1$ it is due to Chang  \cite{Chang}. 
	For $(n,d) = (1,3)$, that is, when $C=X_3\subset  \PP^2$ is an elliptic curve, 
	$$
	\Aut(C) \ = \ C \rtimes \ZZ_m, \ \text{ for some } m\in\{2,4,6\},
	$$
	where $C$ acts on itself by translations. 
	Automorphisms of $C$ that are restrictions of linear automorphisms of $\PP^2$ form a finite subgroup of the infinite group $\Aut(C)$. 
	However, one can still describe automorphisms of $C$ in terms of birational self-maps of $\PP^2$. Indeed, 
	every automorphism of $C$ is the restriction of a Cremona transformation of  $\PP^2$  \cite[Theorem 2.2]{Oguiso2}. 
	One may wonder if the same is true for the other exceptional case $(n,d) = (2,4)$, that is, when $S=X_4\subset  \PP^3$ is a smooth quartic surface. This is the context of Problem~\ref{G-problem}.

	For a smooth quartic surface $S\subset  \PP^3$ with Picard rank $\rho(S)=1$, Problem~\ref{G-problem} is not very interesting, as in this case $\Aut(S) = \{id\}$ \cite[Chapter 15, Corollary 2.12]{Huybrechts}. 
	For quartic surfaces with higher Picard rank, the problem was first addressed by Oguiso in \cite{Oguiso2} and \cite{Oguiso1}.
	In \cite{Oguiso2}, Oguiso provided a positive example:
	he constructed a smooth quartic surface $S\subset \PP^3$ with $\rho(S)=3$ and $\Aut(S)\cong\ZZ_2*\ZZ_2*\ZZ_2$ such that every automorphism of $S$ is the restriction of a Cremona transformation of $\PP^3$. 
	The involutions generating $\Aut(S)$ in this example are shown to preserve special elliptic fibrations on $S$, a property that is exploited to show that they are restrictions of Cremona transformations of $\PP^3$.
	On the negative side, Oguiso constructed in \cite{Oguiso1} a smooth quartic surface $S\subset \PP^3$ with Picard rank $\rho(S)=2$, $\Aut(S)\cong\ZZ$, and such that no nontrivial automorphism of $S$ is induced by a Cremona transformation of $\PP^3$.
	A key idea utilized in this example is the following consequence of the \emph{Sarkisov program} due to Takahashi \cite{Takahashi}:
	the existence of a Cremona transformation stabilizing a quartic surface $S\subset \PP^3$ forces the existence of a curve $C\subset S$ of degree $d<16$ that is not a complete intersection of $S$ with another surface in $\PP^3$, a property not satisfied in Oguiso's example.
	This approach was pushed further by Paiva and Quedo in \cite{paivaquedo} to investigate Gizatullin’s Problem for smooth quartic surfaces $S\subset \PP^3$ with $\rho(S)=2$ more generally. 
	In this case, for a suitable basis of $\Pic(S)\cong \ZZ^2$, the intersection product in $S$ is given by a matrix of the form 
	\[
	\left(\begin{array}{cc}
		4 & b \\
		b & 2c
	\end{array}\right),
	\]
	with $b,c\in \ZZ$. We denote by $r=b^2-8c$ the discriminant of $S$. 
	By \cite{paivaquedo}, if $r> 233$, then there is no curve $C\subset S$ of degree $d<16$ that is not a complete intersection of $S$ with another surface in $\PP^3$. Hence, in those cases, the group $\Bir(\PP^3;S)$ of Cremona transformation of $\PP^3$ stabilizing $S$ coincides with the group $\Aut(\PP^3;S)$ of automorphisms of $\PP^3$ stabilizing $S$.

	The aim of this paper is to complete this analysis and solve Problem~\ref{G-problem} for quartic surfaces $S$ with Picard rank $2$. Our main result is the following. We refer to Section~\ref{sec:aut_general} for the definition of the finite index subgroup $\Aut^{\pm}(S)$ of $\Aut(S)$. 
	Here we only mention that a very general K3 surface satisfies $\Aut(S)= \Aut^{\pm}(S)$ (see Remark \ref{rem:generalIsAutGeneral}). 
	In this case, the surface is called \emph{Aut-general}.
	
	\begin{introthm}\label{introthm}
		Let $S\subset \PP^3$ be a smooth quartic surface over $\CC$ with Picard rank $2$ and discriminant $r$. 
		\begin{enumerate}
			\item If $r > 57$ or $r =52$, then $\Bir(\PP^3;S)=\Aut(\PP^3;S) =\{id\}$. 
			\item If $r \leq 57$ with $r\neq 52$, then every element of the subgroup $\Aut^{\pm}(S) \subseteq \Aut(S)$ is the restriction of a nonregular Cremona transformation of $\PP^3$. 
			Moreover, one of the following holds:
			\begin{itemize}
				\item $r \in  \{9, 12, 16, 24, 25, 33, 36, 44, 49, 57\}$ and $\Aut(S) = \Aut^{\pm}(S) = \{id\}$;
				\item $r \in  \{17, 41\}$ and $\Aut(S) = \Aut^{\pm}(S) \cong \ZZ_2$;
				\item $r \in  \{28, 56\}$ and $\Aut^{\pm}(S) \cong \ZZ_2 \ast \ZZ_2$; or
				\item $r \in  \{20, 32, 40, 48\}$ and $\Aut^{\pm}(S) \cong \ZZ$.
			\end{itemize}       
		\end{enumerate}
	\end{introthm}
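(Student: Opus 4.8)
The proof has the same two-part shape as the statement, and in both parts the link between the Cremona group and arithmetic is the observation that $S\in |-K_{\PP^3}|$: a Cremona transformation stabilizing $S$ is a birational self-map of the log Calabi--Yau pair $(\PP^3,S)$, so by Takahashi's consequence of the Sarkisov program \cite{Takahashi} (as exploited in \cite{Oguiso1,paivaquedo}) its existence forces $S$ to contain a curve $C$ that is the center of a volume-preserving Sarkisov link out of $\PP^3$; conversely, by the Torelli theorem, for a \emph{general} $S$ the group $\Aut(S)$ is computed purely on $\Pic(S)\cong\ZZ^2$. The first thing I would do is fix this dictionary precisely: identify the polarization class $H$ ($H^2=4$), the $(-2)$-classes (which cut out $\Nef(S)$ and thus control finiteness of $\Aut(S)$), the isotropic classes (elliptic pencils), and record that for a general member the only Hodge isometries of the rank-$20$ transcendental lattice are $\pm\mathrm{id}$, so that $\Aut(S)$ is exactly the subgroup of $\mathrm{O}(\Pic(S))$ preserving $\Nef(S)$ and acting by $\pm\mathrm{id}$ on the discriminant group.

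For part (1) the plan is to sharpen the Paiva--Quedo estimate far below their threshold $233$. The point is to replace the crude bound $\deg C<16$ by the exact list of numerical types $(\deg C,p_a(C))=(d,g)$ that can occur as the center of a volume-preserving Sarkisov link out of $\PP^3$ with boundary a smooth quartic --- essentially the base loci of the classical Cremona transformations of $\PP^3$ with smooth irreducible fundamental curve (line, conic, twisted cubic, elliptic quartic, elliptic quintic, and a couple more of small degree and genus), together with the genericity-forced requirement that $C$ is not a complete intersection $S\cap T$. Each admissible type demands a solution $x\in\Pic(S)$ of $x^2=2g-2$, $x\cdot H=d$ with $x$ effective and irreducible; writing this in the basis with Gram matrix $\left(\begin{smallmatrix}4&b\\ b&2c\end{smallmatrix}\right)$ turns the whole list into finitely many binary quadratic Diophantine conditions in $b,c$, hence in $r=b^2-8c$. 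I would then check that for $r>57$, and separately for $r=52$, none of these has a solution meeting all the positivity/irreducibility constraints --- $52$ being precisely the sporadic discriminant where the relevant curve exists numerically but is always a complete intersection, so no link is produced. Takahashi's criterion then yields $\Bir(\PP^3;S)=\Aut(\PP^3;S)$; and since $\Aut(\PP^3;S)=\Aut(S,H)$ is finite and embeds in the stabilizer of $H$ in $\mathrm{O}(\Pic(S))$, the ``Aut-general'' hypothesis is exactly the dense condition in the relevant moduli space under which this finite group vanishes, giving the last clause of (1).

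For part (2), I would first run the same lattice classification in the remaining range $r\le 57$, $r\ne 52$, retaining only those Gram matrices for which the general K3 in the family is genuinely a \emph{smooth} quartic of Picard rank exactly $2$: this discards the small discriminants where $H$ fails to be very ample (an isotropic class of $H$-degree $\le 2$, i.e. a double cover of a quadric) or even ample (a $(-2)$-class orthogonal to $H$, i.e. a nodal image), leaving precisely the eighteen listed values. For each I would compute $\Aut(S)$ from the recipe above: the nef cone of a rank-$2$ K3 is bounded by one or two extremal rays, each spanned either by a $(-2)$-class (a smooth rational curve on $S$) or by an irrational ray, while $\mathrm{O}(\Pic(S))$ is governed by the Pell equation attached to $r$; intersecting this with the discriminant-group condition gives $\Aut(S)\in\{1,\ \ZZ_2,\ \ZZ_2\ast\ZZ_2,\ \ZZ\}$ as stated --- the $\ZZ$ cases being those with no $(-2)$-class and no isotropic class, where a fundamental Pell unit generates an infinite-order automorphism, and the $\ZZ_2$ and $\ZZ_2\ast\ZZ_2$ cases arising from one or two ``Cremona-type'' reflections which, unlike $(-2)$-reflections, act nontrivially on $S$.

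It then remains, for the eight discriminants with $\Aut(S)\ne 1$, to realize each generator as the restriction of a non-regular Cremona transformation. Here I would read off from $\Pic(S)$ the distinguished low-degree curve $C\subset S$ supplied by part (1) --- according to $r$, a line, a conic, a twisted cubic, or an elliptic normal curve of degree $\le 5$ --- and use the associated special birational self-map of $\PP^3$: blow up $C$, run the volume-preserving link to its other end (a chain of flops followed by a divisorial contraction back to $\PP^3$, or a conic-/del Pezzo-bundle Mori fiber space carrying its standard involution), and verify on $\Pic(S)$ that the induced automorphism is the required generator; in the $\ZZ$ cases one checks in addition that the self-map has infinite order (its action on $\Pic(S)$ being the Pell generator), and in the $\ZZ_2\ast\ZZ_2$ cases that the two involutions generate a free product. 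I expect the real obstacles to be exactly these last two: pinning down the \emph{sharp} list of admissible Sarkisov-link centers so that the threshold $57$ and the exception $52$ come out exactly (rather than some weaker bound), and matching the abstract lattice automorphism with the concrete birational map in every case, especially controlling the infinite-order ones. The remaining Diophantine bookkeeping in $r$ and the nef-cone computations should be routine once this framework is in place.
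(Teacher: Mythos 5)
There is a genuine gap, and it sits exactly where you suspected: the list of admissible Sarkisov-link centers. Your proposal replaces the paper's key technical input by the guess that the centers are ``the base loci of the classical Cremona transformations of $\PP^3$'' (line, conic, twisted cubic, elliptic quartic/quintic, plus a couple more small cases). The actual classification (the paper's Proposition~\ref{prop:blancLamyCurves}, built on the Blanc--Lamy classification of curves with weak Fano blowup, the rank-$2$ fibration criterion of Blanc--Lamy--Zimmermann, and a separate argument ruling out links that start with an antiflip) contains $34$ pairs $(p_a,d)$ with degree up to $11$ and genus up to $14$. This matters in both directions. For part~(1), the threshold $r\le 57$, $r\ne 52$ is obtained by computing $d^2-8(p_a-1)$ for \emph{all} pairs in that list and imposing divisibility by $r$; with your shorter list the resulting set of possible discriminants is much smaller and the statement you would prove is different (and false as an ``if and only if'' dichotomy). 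Your explanation of the exception $r=52$ (a curve that exists numerically but is forced to be a complete intersection) is also not what happens: $52$ is excluded simply because no pair in the admissible list has $d^2-8(p_a-1)$ divisible by $52$; complete intersections have class proportional to $H$ and never enter this computation. For part~(2), the realization is impossible with your curves: the surfaces with $r\in\{41,56,20,32,40,48\}$ contain none of the low-degree curves you name, and the generators are realized precisely via links centered at curves of type $(14,11)$, $(6,9)$, $(10,10)$, $(2,8)$, $(11,10)$, $(5,8)$, $(4,8)$, $(3,8)$.

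A second, independent gap is in the $\ZZ$ cases ($r\in\{20,32,40,48\}$). A single link centered at the relevant curve maps $S$ isomorphically to a quartic $S_1$ that is \emph{not} projectively equivalent to $S$, so no ``standard involution at the other end'' produces the generator; the paper must compose two links, the second centered at a general curve in $|4H_1-C_1|$ (passing through the quintic del Pezzo threefold $X_5$ when $r=40$), and then match the composite action on $\Pic(S)$ with the lattice-theoretic generator (computed via Lee's Gluing/Torelli criterion) and invoke the projective-equivalence criterion of Lemma~\ref{lem:projEquivalent}. Your outline of ``blow up $C$, run the link, check infinite order'' does not account for this two-link mechanism, nor for the verification that the composite returns to a surface projectively equivalent to $S$. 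The lattice-theoretic computation of $\Aut(S)$ for $r\le 57$ in your third paragraph is essentially the paper's Proposition~\ref{prop:Autforrleq57} and is fine; but without the correct classification of link centers and the two-link construction, neither the bound in part~(1) nor the realizations in part~(2) can be completed along the route you describe.
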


	Let us briefly explain the strategy of the proof of Theorem~\ref{introthm}. 
	Our main tool is a special version of the Sarkisov program for Calabi-Yau pairs.
	The classical Sarkisov program allows one to factorize any birational self-map of $\PP^n$ as a composition of simple birational maps  between Mori fiber spaces, called  \emph{Sarkisov links} (see Section~\ref{section:birational} for details).  
	The special version for Calabi-Yau pairs allows one to factorize any \emph{volume preserving} birational self-map of a Calabi-Yau pair of the form $(\PP^n, D)$ as a composition of volume preserving Sarkisov links. We refer to Subsection~\ref{Subsection:CYpairs} for the definition of Calabi-Yau pair and volume preserving birational maps. 
	Here, we only mention that when $S \subset \PP^3$ is a smooth quartic surface, the pair $(\PP^3, S)$ is a \emph{canonical} Calabi-Yau pair. In this special case, a Cremona transformation of $\PP^3$ is volume preserving for the pair $(\PP^3, S)$ if and only if it restricts to an automorphism of $S$ (see Remark~\ref{rem:decomposition_group}). This feature makes this special version of the Sarkisov program particularly convenient in the context of Gizatullin’s problem.
	The volume preserving condition imposes strong restrictions.
	In particular, it forces the first link in the decomposition to start with the blowup of a curve $C\subset S$ (Proposition~\ref{CinS}). 
	While curves in $\PP^3$ whose blowups initiate Sarkisov links are not completely classified yet, a key technical result that we prove is the following classification of space curves \emph{contained in smooth quartic surfaces with Picard rank $2$} whose blowups initiate Sarkisov links.

	\begin{introprop}[=Proposition~\ref{prop:blancLamyCurves}] \label{intro_prop}
		Let $C\subset \PP^3$ be a (possibly singular) curve of arithmetic genus $p_a$ and degree $d$ lying on a smooth quartic surface with Picard rank $2$,
		and denote by $X \to \PP^3$ the blowup of $\PP^3$ along $C$.
		If $X \to \PP^3$ initiates a Sarkisov link, then $X$ is weak Fano (i.e., $-K_X$ is nef and big) and 
		\[
		(p_a,d) \in 
		\left\{
		\def\arraystretch{1.2}
		\begin{array}{c}
			\arraycolsep=1.4pt\def\arraystretch{1.2}
			\begin{array}{llllllllllll}
				(0,1), & (0,2), & (0,3), & (0,4), & (0,5), & (0,6), & (0,7), & (1,3), & (1,4), & (1,5), & (1,6), & (1,7),\\
				(2,5), & (2,6), & (2,7), & (2,8), & (3,6), & (3,7), &
				(3,8), & (4,6), & (4,7), & (4,8), & (5,7), & (5,8), 
			\end{array}\\
			\arraycolsep=2.6pt
			\begin{array}{llllllllll}
				{(6,8)}, & (6,9), & (7,8), & (7,9), & (8,9), & (9,9), & (10,9), & (10,10), & (11,10), & (14,11)
			\end{array}
		\end{array}
		\right\}.
		\]
	\end{introprop}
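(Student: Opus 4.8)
The plan is to convert the hypothesis into a few numerical inequalities in $(p_a,d)$ and then enumerate.

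\smallskip
\emph{Weak Fano.} If $X\to\PP^3$ initiates a volume preserving Sarkisov link, then $X$ is a $\QQ$-factorial terminal weak Fano threefold with $\rho(X)=2$; this follows from the structure of volume preserving links recalled above (in particular $-K_X$ is nef and big). This gives the first assertion, so from now on I treat $X$ as a weak Fano threefold.

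\smallskip
\emph{Reduction to a condition on $S$.} Since $S$ is smooth, $C$ is a Cartier divisor on $S$, so the strict transform $\widetilde S\subset X$ of $S$ is isomorphic to $S$; and since $S$ contains $C$ with multiplicity one,
\[
[\widetilde S]\;=\;4f^*H-E\;=\;-K_X,
\]
where $H$ is the hyperplane class and $E$ the exceptional divisor (note that consequently $K_X+\widetilde S\sim 0$, which is exactly why such a blowup is the natural volume preserving first move). Restricting $-K_X$ to $\widetilde S\cong S$ yields the class $4H-C\in\Pic(S)$, and — this is the crucial point — this restriction loses nothing about positivity: a curve of $X$ not contained in $\widetilde S$ meets the effective divisor $\widetilde S=-K_X$ non-negatively, while the curves contained in $\widetilde S$ are precisely the curves of $S$. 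Hence $-K_X$ is nef (resp.\ big) on $X$ if and only if $4H-C$ is nef (resp.\ big) on $S$, and
\[
(-K_X)^3\;=\;(4H-C)^2\;=\;62+2p_a-8d .
\]

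\smallskip
\emph{Numerical constraints.} Next I use that $S$ is a K3 surface of Picard rank $2$ and that $C$ is not a complete intersection of $S$ with a surface (otherwise $[C]\in\ZZ H$; by Takahashi's criterion only curves that are not complete intersections are relevant). Then $H$ and $C$ span a rank-$2$ sublattice of the even hyperbolic lattice $\Pic(S)$, so its discriminant
\[
r\;=\;-\det\begin{pmatrix}4 & d\\ d & 2p_a-2\end{pmatrix}\;=\;d^2+8-8p_a
\]
is positive. Combining this with: $C$ effective, so $C^2=2p_a-2\ge -2$; and $4H-C$ nef and effective, so $(4H-C)\cdot H=16-d\ge 0$, $(4H-C)\cdot C=4d-2p_a+2\ge 0$, and $(4H-C)^2=62+2p_a-8d>0$ — a direct check leaves only finitely many candidate pairs $(p_a,d)$, all with $d\le 13$.

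\smallskip
\emph{Pruning to the list.} The final step — which I expect to be the main obstacle — is to cut this preliminary list down to the tabulated one. For each admissible discriminant $r$ one must first identify the Picard lattice of a genuine smooth quartic of Picard rank $2$ with that discriminant: not every $r$ occurs, because $H$ must be very ample, and this fails exactly when $\Pic(S)$ contains a nef elliptic class of $H$-degree $\le 2$ or a $(-2)$-class of $H$-degree $0$ — which already eliminates all remaining candidates of degrees $12$ and $13$. One then computes the nef and effective cones of $S$ and retains only the classes $C$ that are effective, representable by an irreducible (possibly singular) curve, and for which $4H-C$ is genuinely nef (this is not automatic and fails for several numerically admissible pairs). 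Finally one verifies, case by case, that the resulting weak Fano $X$ actually initiates a Sarkisov link, the key data being the null locus of $-K_X$, namely the $(-2)$-curves of $S$ orthogonal to $4H-C$ together with the multisecant lines of $C$; at this point one may also invoke the Blanc--Lamy classification of space curves whose blowup makes $\PP^3$ weak Fano. Running through the finitely many surviving pairs $(r,C)$ leaves exactly the stated set of $(p_a,d)$.
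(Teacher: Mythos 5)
There is a genuine gap, and it sits exactly where the real work lies: your opening claim that initiating a Sarkisov link forces $X$ to be weak Fano. This is not a formal consequence of ``the structure of volume preserving links'' (volume preservation is not even a hypothesis of the statement): by Definition~\ref{def:links}, a link of type (I) or (II) starting with the divisorial contraction $X\to\PP^3$ may continue with antiflips, and in that case $-K_X$ is not nef. Proving nefness is the heart of the paper's proof of Proposition~\ref{prop:blancLamyCurves}: if $X$ were not weak Fano, the first antiflipped ray would be generated by a smooth rational curve $\Gamma\subset\tilde S$ with $-K_X\cdot\Gamma=-1$ (Remark~\ref{rem:antiflipsRationalCurves} together with \cite[Lemma 4.4]{ACM}), and writing $\check\Gamma$ and $C$ in a basis of $\Pic(S)$ turns this into a Diophantine system which, using the bound $d<16$ of Lemma~\ref{lem:degreeBound}, admits an integer solution only for $(p_a,d)=(15,11)$ with $r=9$; that residual case is then excluded by showing that the anticanonical map of $X$ is a \emph{divisorial} contraction onto a hypersurface of $\PP^4$ with strictly canonical, non-terminal singularities, violating condition (3) of a rank $2$ fibration. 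Note that $(15,11)$ satisfies every inequality you impose ($r=9>0$, $(4H-C)^2=4>0$, $d\le 16$, $(4H-C)\cdot C\ge 0$), so your enumeration cannot dispose of it without the weak Fano property you assumed at the outset: the argument is circular precisely at the one case that matters.

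Even granting weak Fano-ness, your ``pruning'' step is a program rather than a proof: the inequalities you list also admit pairs such as $(12,10)$ and $(13,10)$, and the paper does not cut down to the stated list by computing nef and effective cones surface by surface. Instead one must (a) replace the possibly singular $C$ by a general member $C'\in|C|$, which is smooth by Corollary~\ref{cor:linearSystem}, and prove that the blowup along $C'$ is again weak Fano and again gives a rank $2$ fibration --- this is Lemma~\ref{lem:rank2FibrIff}, whose proof requires transferring intersection numbers through $\tilde S\cong S$ and a semicontinuity argument in the universal family over $|C|$ to keep the anticanonical contraction small; and then (b) apply the Blanc--Lamy classification in the filtered form of Theorem~\ref{thm:blancLamyCurves}, i.e.\ weak Fano blowups of \emph{smooth} curves whose anticanonical morphism is a small contraction. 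You mention Blanc--Lamy only as an optional aid and never explain why a classification of smooth curves constrains your possibly singular $C$; without step (a) the deduction of the list does not go through.
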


	Let $S$ be a smooth quartic surface with Picard rank $2$ and discriminant $r$, and suppose that $\Bir(\PP^3;S)\setminus \Aut(\PP^3;S)\neq \emptyset$.
	Proposition~\ref{intro_prop} above implies that $S$ contains a curve $C$ with arithmetic genus and degree $(p_a,d)$ in the above list. 
	The discriminant of the sublattice of $\Pic(S)$ generated by the hyperplane class and the curve $C$ can be readily computed and must be divisible by $r$.
	This implies that $r \leq 57$ and $r\neq 52$ (Corollary~\ref{cor:noRealizationForr>57}).
	Combined with Corollary~\ref{cor:actionDetermByPic}(3), this gives the first part of Theorem~\ref{introthm}. 
	
	In order to realize the automorphisms of $S\subset \PP^3$ as Cremona transformations and prove the second part of Theorem~\ref{introthm}, we introduce a new approach, again exploiting Sarkisov links. 
	Let us explain the general strategy.
	First of all, for each discriminant $r \leq 57$, $r\neq 52$, we determine $\Aut^{\pm}(S)$ for a smooth quartic surface $S$ with Picard rank $2$ and discriminant $r$. 
	This can be done by verifying the existence of certain numerical classes in $\Pic(S)$, using results of \cite{GLP} and \cite{paivaquedo} (Proposition~\ref{prop:Autforrleq57}). The possibilities are the following:
	\[
	\Aut^{\pm}(S) \cong 
	\left\{
	\begin{array}{ll}
		\{id\},            & \text{ if } r \in \{9, 12, 16, 24, 25, 33, 36, 44, 49, 57\};\\
		\ZZ_2,            & \text{ if } r \in \{17, 41\};\\
		\ZZ_2 \ast \ZZ_2, & \text{ if } r \in \{28, 56\}; \\
		\ZZ,              & \text{ if } r \in \{20, 32, 40, 48\}.    
	\end{array}   
	\right.
	\]
	After describing the action of the generators of $\Aut^{\pm}(S)$ on $\Pic(S)$, we proceed to construct Cremona transformations realizing them.
	We do so by considering the Sarkisov links initiated by blowing up curves $C\subset S$ with invariants $(p_a,d)$ listed in Proposition~\ref{intro_prop}. These links have been well studied, and many of them are already Cremona transformations $\PP^3\dasharrow \PP^3$.
	
	\begin{introex}[$r=41$]\label{ex:r=41}
		Let $S\subset \PP^3$ be a smooth quartic surface with Picard rank $2$ and discriminant $r=41$.
		Denote by $H\in \Pic(S)$ the hyperplane class, and by $\iota=\varphi_{|H|}\colon S\to \PP^3$ the corresponding embedding. 
		We show in Proposition~\ref{prop:Autforrleq57} that $\Aut(S) \cong \ZZ_2$.
		There are exactly 2 classes in $\Pic(S)$ corresponding to curves in $S$ with arithmetic genus and degree $(p_a,d)$ listed in Proposition~\ref{intro_prop}, namely $(p_a,d)=(2,7)$ and $(p_a,d)=(6,9)$.
		Indeed, in Proposition~\ref{prop:boldcurvesasgenerator} we prove the existence of a curve $C\subset S$ with $(p_a,d)=(6,9)$. The case  
		$(p_a,d)=(2,7)$ can be treated in the same way. 
		
		Consider a smooth curve $C\subset S$ with $(p_a,d)=(6,9)$. 
		We show in Proposition~\ref{prop:boldcurvesasgenerator} that $\{H,C\}$ is a basis of $\Pic(S)$, and that 
		the action of the generator $g$ of $\Aut(S)$ on $\Pic(S)$ with respect to this basis is given by the following matrix:
		\[
		\left(\begin{array}{rr}
			27 & 104\\
			-7 & -27
		\end{array}\right).
		\]  
		The Sarkisov link initiated by the blowup $p\colon X\to \PP^3$ of the curve $C$ was described in detail in \cite[Proposition 3.1 and Remark 3.2]{ZikRigid}. It is 
		a birational involution $\varphi\colon \PP^3\dasharrow \PP^3$ that preserves the quartic surface $S$ (after composition with a suitable automorphism of $\PP^3$). More precisely, $\varphi$ fits into a commutative diagram:
		\[
		\xymatrix@R=.3cm{
			X \ar[dd]_p \ar[rd] \ar@{..>}^{\tilde \varphi}[rr] && X \ar[ld] \ar[dd]^p\\
			& Z \ar@(dl,dr)^{\alpha}\\
			\PP^3 \ar@{-->}_{\varphi}[rr] && \ \PP^3 \ ,
		}
		\]
		where $\tilde \varphi \colon X\psmap X$ is a flop, the anti-canonical model $Z$ of $X$ is a double cover of $\PP^3$ ramified along a sextic hypersurface, and $\alpha \colon Z\to Z$ is the deck transformation of $Z$ over $\PP^3$.
		In order to check that the birational involution $\varphi$ of $\PP^3$ restricts to the involution $g$ of $S$, we verify that the linear systems on $S$ corresponding to the embeddings $\varphi\circ \iota \colon S\to \PP^3$ and $\iota\circ g \colon S\to \PP^3$ coincide:
		\[
		\xymatrix@R=.5cm@C=1.3cm{
			\PP^3 \ar@{-->}[r]^{\varphi} & \PP^3\\
			S \ar[u]^{\iota} \ar[r]_{g} & \, S \, . \ar[u]_{\iota}
		}
		\]
		
		For the sake of completeness, we mention that the Sarkisov link initiated by the blowup of a smooth curve with invariants $(p_a,d)=(2,7)$ was described in \cite[Example 5.12]{BL}. It is a birational map $\PP^3\dasharrow V_4 \subset \PP^5$, where $V_4$ denotes a complete intersection of two quadrics in $\PP^5$.
	\end{introex}

	When $S\subset \PP^3$ is a smooth quartic surface with Picard rank $2$ and discriminant $r \in \{17, 28, 56\}$, $\Aut^{\pm}(S)$ is also generated by involutions, and we follow the same strategy described in Example~\ref{ex:r=41} to realize these generators as birational involutions of $\PP^3$. 
	On the other hand, when $r \in \{20, 32, 40, 48\}$, one has $\Aut^{\pm}(S)\cong \ZZ$. 
	In these cases, the construction of a Cremona transformation of $\PP^3$ restricting to a generator of $\Aut^{\pm}(S)$ also uses Sarkisov links, but it is more involved. We explain it in a special case. 
	
	\begin{introex}[$r=48$]\label{ex:r=48}
		Let $S\subset \PP^3$ be a smooth quartic surface with Picard rank $2$ and discriminant $r=48$.
		Denote by $H\in \Pic(S)$ the hyperplane class. By Proposition~\ref{prop:Autforrleq57}, $\Aut^{\pm}(S) \cong \ZZ$, and we want to realize the generator $g$ of $\Aut^{\pm}(S)$ as the restriction of a Cremona transformation of $\PP^3$. 
		There are exactly 2 classes in $\Pic(S)$ corresponding to curves $C$ and $C'$ in $S$ with arithmetic genus and degree $(p_a,d)=(3,8)$, and this is the only pair $(p_a,d)$ listed in Proposition~\ref{intro_prop} realized by curves in $S$ (Proposition~\ref{prop:boldcurvesasgenerator}). Moreover, the curves $C$ and $C'$ satisfy $C+C'=4H$ in $\Pic(S)$.
		The action of the generator $g$ of $\Aut^{\pm}(S)$ on $\Pic(S)$, with respect to the basis $\{H,C\}$, is given by the following matrix (see Proposition~\ref{prop:boldcurvesasgenerator}):
		\[
		\left(\begin{array}{rr}
			209 & 56\\
			-56 & -15
		\end{array}\right).
		\]  
		The Sarkisov link $\varphi_1\colon \PP^3\dasharrow \PP^3$ initiated by the blowup $p\colon X\to \PP^3$ of a smooth curve $C$ of genus and degree $(g,d)=(3,8)$ is described in \cite[Table 2]{BL}. It fits into a commutative diagram:
		\[
		\xymatrix@R=.5cm@C=1.3cm{
			X \ar@{..>}[r]^{\chi} \ar[d]_p  &  \, X_1\ar[d]^{p_1} \\
			\PP^3  \ar@{-->}[r]^{\varphi_1} &  \, \PP^3  \, ,
		}
		\]
		where $\chi$ is a flop and $p_1\colon X_1 \to \PP^3$ is the blowup of $\PP^3$ along another smooth curve $C_1$ of genus and degree 
		$(g,d)=(3,8)$.
		One can check that $\varphi_1$ maps $S$ isomorphically onto its image $S_1\subset \PP^3$ and $C_1\subset S_1$ (see Proposition~\ref{prop:realizationZ}).
		However, $S$ and $S_1$ are not projectively equivalent, and so  $\varphi_1$ is not the Cremona transformation that we are looking for. 
		We then choose a smooth curve $C_2\subset S_1$ in the linear system $|4H_1-C_1|$, where $H_1\in \Pic(S_1)$ denotes the hyperplane class. The curve $C_2$ also has  genus and degree $(g,d)=(3,8)$.
		The Sarkisov link $\varphi_2\colon \PP^3 \dasharrow \PP^3$ initiated by the blowup $p_2\colon X_2\to \PP^3$ of the curve $C_2$ is described exactly as before.
		We consider the composition $\varphi=\varphi_2\circ \varphi_1$:
		\[
		\xymatrix@R=.5cm{
			X \ar@{..>}[r] \ar[d]_{p}  &  \, X_1\ar[dr]^{p_1} && X_2\ar[dl]_{p_2} \ar@{..>}[r] &  X'  \ar[d]^{p'}  \\
			\PP^3  \ar@{-->}[rr]^{\varphi_1}\ar@{-->}@/_2.0pc/[rrrr]^{\varphi} &&   \PP^3 \ar@{-->}[rr]^{\varphi_2}  &&  \, \PP^3  \, .
		}
		\]
		\smallskip
		
		\noindent We denote by $S'\subset \PP^3$ the image of $S$ under $\varphi$.
		We can check that $S$ and $S'$ are projectively equivalent. So, after composition with a suitable automorphism of $\PP^3$, we have $\varphi\in \Bir(\PP^3;S)$. Moreover, as in the previous example, we verify that $\varphi$ restricts to the generator $g$ of $\Aut^{\pm}(S)$ by comparing the linear systems that give $\varphi_{|S}$ and $g$. 
	\end{introex}

	In conclusion, our results suggest that the answer to Problem~\ref{G-problem} for smooth quartic surfaces with Picard rank at least two should be: \emph{almost never}. One may even hope for a complete classification of automorphisms of smooth quartic surfaces that are restrictions of Cremona transformations of $\PP^3$. 
	On the other hand, as far as we know, there is currently no classification of the possible automorphism groups of K3 surfaces with higher Picard rank, even when the surface is Aut-general. 
	It should be also noted that in all the cases for which we provide a positive answer to Problem~\ref{G-problem}, we only describe one way to realize each automorphism in $\Aut^{\pm}(S)$.
	More specifically, we only exploit the Sarkisov links initiated by the blowups of \emph{some} classes of curves with invariants $(p_a,d)$ listed in Proposition~\ref{intro_prop}.
	We expect that many of these automorphisms admit other Sarkisov decompositions, which raises the interesting problem of describing the structure of the group $\Bir(\PP^3;S)$, as well as the kernel of the restriction homomorphism $\Bir(\PP^3;S) \to \Aut(S)$.
	These groups are classically known as the \emph{decomposition} and  \emph{inertia} groups of the quartic surface $S\subset \PP^3$,
	respectively. 
	
	\medskip

	This paper is organized as follows:
	in Section~\ref{section:K3s}, we review some of the basic theory of K3 surfaces and their automorphisms, focusing on smooth quartic surfaces of Picard rank $2$;
	in Section~\ref{section:birational}, we review the Sarkisov Program, both its classical version and the volume preserving version for Calabi-Yau pairs;
	in particular, we prove Proposition~\ref{intro_prop} classifying space curves contained in smooth quartic surfaces with Picard rank $2$ whose blowups initiate Sarkisov links; 
	finally, we put these tools together in Section~\ref{section:main} to prove Theorem~\ref{introthm}.

	\medskip
	
	\paragraph{\textbf{Notation \& Conventions}}
	In this paper we always work over $\CC$. 
	All varieties are assumed to be projective and irreducible.
	In particular, all K3 surfaces are assumed to be projective.
	By a curve we always mean an irreducible and reduced curve.
	We often abuse notation and use the same symbol to denote a curve $C$ on a surface $S$ and the corresponding class in $\Pic(S)$.
	While we write $X\dasharrow Y$ to denote an arbitrary birational map between varieties $X$ and $Y$, we write $X\psmap Y$ to denote a small birational map between $X$ and $Y$, i.e.,\ a birational map that is an isomorphism in codimension one. 
	
	\medskip

	\paragraph{\textbf{Acknowledgements}} 
	Gizatullin’s problem was the topic of the Working Group in Algebraic Geometry during the BIRS' ``Latin American and Caribbean Workshop on Mathematics and Gender'' at Casa Matem\'atica Oaxaca in 2022. We thank the participants of this working group for many rich discussions, especially Michela Artebani, Paola Comparin, Alice Garbagnati, Ana Quedo and Cec\'\i lia Salgado. 
	We also thank Fabio Bernasconi, Enrica Floris, Alex Massarenti, Alessandra Sarti and Nikolaos Tsakanikas.
	Finally, we are grateful to the referee for the valuable suggestions that helped improve the presentation of the paper.
	Carolina Araujo has been supported by grants from CNPq, Faperj and CAPES/COFECUB. Daniela Paiva  has been supported by CAPES.
	Sokratis Zikas acknowledges the support of the Swiss National Science Foundation Grant ``Cremona groups of higher rank via the Sarkisov program'' P500PT\_211018 and would like to thank the Instituto de Matem{\'a}tica Pura e Aplicada for their hospitality.

	\section{K3 surfaces and their automorphisms}\label{section:K3s}

	\subsection{The lattices of a K3 surface and the global Torelli Theorem}
	
	\begin{defi}\label{def:lattice}
		A lattice $L$ is a free $\ZZ$-module of finite rank equipped with a non-degenerate symmetric bilinear form $q$.
		We denote by the same symbol the extensions of $q$ to a bilinear form on $L\otimes_{\ZZ} \QQ$ or $L\otimes_{\ZZ} \RR$.
		\begin{enumerate}
			\item The \emph{discriminant} of $L$ is
			\[
			\disc(L) = -\det(Q),
			\]
			where $Q$ is the matrix of $q$ with respect to any basis of $L$.
			\item The lattice $L$ is said to be \emph{even} if $q(x,x)\in 2\ZZ$ for any $x\in L$.
			\item The \emph{signature} of $L$ is the signature of the extended bilinear form $q$ on $L\otimes_{\ZZ} \RR$. 
			\item The \emph{orthogonal group}  of $L$ is  the group $O(L)$ of isometries of $L$, i.e.,  isomorphisms of $L$ preserving the bilinear form $q$.
			\item The \emph{dual lattice} of $L$ is
			\[
			L^* = \big\{ x \in L \otimes_{\ZZ} \QQ \, \big| \, \forall y \in L \, ,\  q(x,y) \in \ZZ\big\}\supset L.
			\]
			
			\item The \emph{discriminant group}  of $L$ is the quotient $A(L)=L^*/L$.
			\item An isometry $\varphi$ of $L$ induces an automorphism of the discriminant group $A(L)$, which we denote by $\overline{\varphi}$.
			
		\end{enumerate}
	\end{defi}

	\begin{defi}
		A \emph{K3 surface} is a smooth projective surface $S$ such that $H^1(S,\mathcal{O}_S)=0$ and $H^0(S,\Omega_S^2) = \CC \cdot \omega_S$ for a nowhere vanishing $2$-form $\omega_S$.
		
	\end{defi}
	
	Let $S$ be a K3 surface. There are two natural lattices associated to $S$: the Picard group $\Pic(S)$, endowed with the intersection product, and the second cohomology group $H^2(S,\ZZ)$, equipped with the cup product. The exponential sequence identifies the Picard group $\Pic(S)\cong H^1(S,\OO^*_S)$ with a sublattice of $H^2(S,\ZZ)$, described as $\Pic(S)=\big\{x\in H^2(S,\ZZ)\ | \ \langle x,\omega_S\rangle=0\big\}$, where the restriction of the cup product coincides with the intersection product of $S$.
	The Picard group $\Pic(S)$ is an even lattice of signature $(1,\rho(S)-1)$, 
	where $\rho(S)$ is the Picard rank of $S$. Conversely, we have the following.
	
	\begin{thm}[{\cite[Corollary 2.9]{Morrison}}]\label{thm:K3andlattices}
		Let $L$ be an even lattice of signature $(1,\rho-1)$, with $\rho\leq 10$. Then there exists a K3 surface $S$ and a lattice isometry $\Pic(S)\cong L$.
	\end{thm}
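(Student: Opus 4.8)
The plan is to combine two classical ingredients: Nikulin's theory of primitive embeddings of even lattices, and the surjectivity of the period map for K3 surfaces. Recall that $\Lambda_{\mathrm{K3}} = U^{\oplus 3}\oplus E_8(-1)^{\oplus 2}$ — with $U$ the hyperbolic plane and $E_8(-1)$ the negative-definite $E_8$ lattice — is the unique even unimodular lattice of signature $(3,19)$, and that for every K3 surface $S$ there is a lattice isometry $H^2(S,\ZZ)\cong \Lambda_{\mathrm{K3}}$. The first step is to produce a \emph{primitive} embedding $\iota\colon L\hookrightarrow \Lambda_{\mathrm{K3}}$. Since $L$ is even of signature $(1,\rho-1)$ with $\rho\le 10$, we have $1<3$, $\ \rho-1<19$, and the minimal number of generators $\ell\big(A(L)\big)$ of the discriminant group satisfies $\ell\big(A(L)\big)\le \rank(L)=\rho\le 22-\rho$; hence Nikulin's existence criterion for primitive embeddings into even unimodular lattices applies and such an $\iota$ exists. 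This is precisely where the hypothesis $\rho\le 10$ is used.

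Next, set $T=\iota(L)^{\perp}\subset\Lambda_{\mathrm{K3}}$, a lattice of signature $(2,20-\rho)$, and consider the period domain $\Omega_T=\big\{[\omega]\in\PP(T\otimes_{\ZZ}\CC)\,:\,\langle\omega,\omega\rangle=0,\ \langle\omega,\bar\omega\rangle>0\big\}$. Since $T$ has signature $(2,20-\rho)$ with $\rho\le 10$, the space $\Omega_T$ is a nonempty complex manifold of dimension $20-\rho\ge 1$, so it is not contained in the countable union $\bigcup_{0\ne v\in T}\{[\omega]:\langle\omega,v\rangle=0\}$ of proper analytic subsets. Choose $[\omega]$ in the complement of this union. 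Viewing $[\omega]$ as a point of the period domain of $\Lambda_{\mathrm{K3}}$ via the inclusion $T\subset\Lambda_{\mathrm{K3}}$, the surjectivity of the period map for (a priori possibly non-algebraic) K3 surfaces yields a K3 surface $S$ with a marking $\phi\colon H^2(S,\ZZ)\xrightarrow{\ \sim\ }\Lambda_{\mathrm{K3}}$ such that $\phi(\CC\cdot\omega_S)=\CC\cdot\omega$.

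It remains to identify the Picard lattice. By the description $\Pic(S)=\{x\in H^2(S,\ZZ):\langle x,\omega_S\rangle=0\}$ recalled before the statement, $\phi$ carries $\Pic(S)$ isomorphically onto $\omega^{\perp}\cap\Lambda_{\mathrm{K3}}$. One has $\iota(L)\subseteq\omega^{\perp}$ because $\omega\in T=\iota(L)^{\perp}$. Conversely, if $x\in\Lambda_{\mathrm{K3}}$ with $\langle x,\omega\rangle=0$, decompose $x=a+b$ with $a\in\iota(L)\otimes\QQ$ and $b\in T\otimes\QQ$; since $\iota(L)\perp T$ and $\omega\in T\otimes\CC$ we get $0=\langle x,\omega\rangle=\langle b,\omega\rangle$, and clearing denominators the genericity of $[\omega]$ forces $b=0$, so $x\in(\iota(L)\otimes\QQ)\cap\Lambda_{\mathrm{K3}}=\iota(L)$ by primitivity of $\iota$. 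Hence $\phi$ restricts to an isometry $\Pic(S)\cong\iota(L)\cong L$. Finally, since $L$, and therefore $\Pic(S)$, has signature $(1,\rho-1)$ and thus contains a class of positive self-intersection, the K3 surface $S$ is projective, in accordance with the conventions of the paper.

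The main obstacle is the first step: guaranteeing the primitive embedding $L\hookrightarrow\Lambda_{\mathrm{K3}}$, which rests on Nikulin's discriminant-form machinery and is exactly where the numerical hypothesis $\rho\le 10$ enters (for larger $\rho$ such an embedding need not exist for every $L$). Once this is granted, the remainder is the standard ``very general period point'' argument; its only delicate point is the countable avoidance ensuring that the resulting Picard lattice is not strictly larger than $L$, and that in turn uses only $\dim\Omega_T>0$, which would already follow from $\rho\le 19$.
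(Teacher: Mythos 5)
The paper gives no proof of this statement---it is imported directly from \cite[Corollary 2.9]{Morrison}---and your argument is exactly the proof of that cited result: Nikulin's criterion (where $\rho\leq 10$ guarantees the numerical conditions) provides a primitive embedding $L\hookrightarrow \Lambda_{\mathrm{K3}}$, and a very general period point in the orthogonal complement combined with surjectivity of the period map yields a K3 surface with $\Pic(S)\cong L$, which is projective since $L$ has signature $(1,\rho-1)$ and hence contains a class of positive square. Your write-up is correct, including the genericity argument ensuring $\Pic(S)$ is not strictly larger than $\iota(L)$.
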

	
	The second cohomology group $H^2(S,\ZZ)$ has a weight-two Hodge structure given by the decomposition
	$$H^2(S,\mathbb{C})=H^{2,0}(S)\oplus H^{1,1}(S)\oplus H^{0,2}(S).$$
	Here $H^{2,0}(S)\cong H^0(S,\Omega_S^2)$ and, after scalar extension of the bilinear form,  $H^{2,0}(S)\oplus H^{0,2}(S)$ is orthogonal to $H^{1,1}(S)$. 
	
	\begin{defi}
		Let $S$ be a K3 surface.
		\begin{enumerate}
			\item The \emph{discriminant} of $S$ is the discriminant of its Picard lattice $\Pic(S)$.
			\item The \emph{ample cone} $\Amp(S)\subset \Pic(S)\otimes \RR$ is the set of all finite sums $\sum a_iD_i$ with $D_i\in \Pic(S)$ ample and $a_i\in\RR_{> 0}$. 
			\item The \emph{transcendental lattice of $S$} is the orthogonal complement $T(S)\defeq\Pic(S)^{\perp}\subset H^2(S,\ZZ)$.
			\item A \emph{Hodge isometry} of $H^2(S,\ZZ)$ is an isometry $\varphi\in O(H^2(S,\ZZ))$ whose extension to $H^2(S,\mathbb{C})$ preserves the Hodge decomposition.
		\end{enumerate}
	\end{defi}
	
	For a K3 surface $S$, the discriminant groups of $\Pic(S)$ and $T(S)$ are naturally isomorphic, $A(\Pic(S))\cong A(T(S))$.
	Any Hodge isometry $\varphi$ of $H^2(S,\ZZ)$ preserves the lattices $\Pic(S)$ and $T(S)$, inducing isometries $\varphi_1$ and $\varphi_2$ on $\Pic(S)$ and $T(S)$, respectively, such that $\overline{\varphi_1}=\overline{\varphi_2}$ under the identification $A(\Pic(S))\cong A(T(S))$.     
	The following result is a converse of this last fact. We refer to \cite[Corollary 1.5.2]{Nikulin2} for a more general result. 
	
	\begin{thm}[Gluing isometries]\label{thm:gluing} Let $S$ be a K3 surface.
		Let $\varphi_1$ and $\varphi_2$ be isometries of $\Pic(S)$ and $T(S)$, respectively. If $\overline{\varphi_1}=\overline{\varphi_2}$ under the identification $A(\Pic(S))\cong A(T(S))$, then there exists an isometry $\varphi$ on $H^2(S,\ZZ)$ whose restrictions to $\Pic(S)$ and $T(S)$ are $\varphi_1$ and $\varphi_2$, respectively.    
	\end{thm}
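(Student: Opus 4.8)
The plan is to reduce the statement to the gluing theorem for lattices of Nikulin (\cite[Corollary 1.5.2]{Nikulin2}), which the paper explicitly allows us to cite. The key point is that $\Pic(S)$ and $T(S)$ sit inside $H^2(S,\ZZ)$ as a pair of mutually orthogonal primitive sublattices whose sum has finite index, and the overlattice $H^2(S,\ZZ)$ is recorded — up to isometry — by a suitable ``gluing isomorphism'' between the discriminant groups. So the first step is to recall the general lattice-theoretic mechanism: if $M$ is an even unimodular lattice containing a primitive sublattice $N$ with orthogonal complement $N^\perp$, then $M/(N\oplus N^\perp)$ is the graph of an anti-isometry $\gamma\colon A(N)\to A(N^\perp)$ (with respect to the discriminant quadratic forms), and conversely $M$ can be reconstructed from $\gamma$ as the preimage in $N^*\oplus (N^\perp)^*$ of this graph. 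Here $H^2(S,\ZZ)$ is the even unimodular K3 lattice, $N=\Pic(S)$, $N^\perp=T(S)$, and the identification $A(\Pic(S))\cong A(T(S))$ referenced in the statement is exactly (the inverse of) this gluing anti-isometry $\gamma$.

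Next I would translate the hypothesis. We are given $\varphi_1\in O(\Pic(S))$ and $\varphi_2\in O(T(S))$ with $\overline{\varphi_1}=\overline{\varphi_2}$ under $A(\Pic(S))\cong A(T(S))$; concretely this says $\gamma\circ\overline{\varphi_1}=\overline{\varphi_2}\circ\gamma$, i.e.\ the pair $(\varphi_1,\varphi_2)$ is compatible with the gluing. Then $\varphi_1\oplus\varphi_2$ is an isometry of $\Pic(S)\oplus T(S)$, hence extends to an isometry of the rational span $\Pic(S)_\QQ\oplus T(S)_\QQ = H^2(S,\QQ)$, and the compatibility condition is precisely what guarantees that this rational isometry preserves the overlattice $H^2(S,\ZZ)$: indeed $H^2(S,\ZZ)$ is the preimage of $\mathrm{graph}(\gamma)$, and $(\overline{\varphi_1},\overline{\varphi_2})$ sends $\mathrm{graph}(\gamma)$ to itself by the displayed identity. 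Thus we obtain the desired $\varphi\in O(H^2(S,\ZZ))$ restricting to $\varphi_1$ on $\Pic(S)$ and to $\varphi_2$ on $T(S)$. This is literally the content of Nikulin's corollary applied to the primitive embedding $\Pic(S)\hookrightarrow H^2(S,\ZZ)$, so the ``proof'' is mostly a matter of spelling out that $\Pic(S)$ is primitive in $H^2(S,\ZZ)$ (which holds because $\Pic(S)=\{x : \langle x,\omega_S\rangle=0\}$ is a kernel, hence saturated) and that $T(S)=\Pic(S)^\perp$ by definition.

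I do not expect a genuine obstacle here — the result is a known consequence of Nikulin's theory and the paper only needs the existence statement, not a canonical choice of $\varphi$. The one point that deserves care is the direction of the compatibility: one must check that $\overline{\varphi_1}=\overline{\varphi_2}$ (as stated, using the fixed identification $A(\Pic(S))\cong A(T(S))$) is the correct hypothesis to feed into Nikulin's gluing, i.e.\ that it matches the condition ``$\varphi_1$ and $\varphi_2$ induce the same automorphism of the discriminant group of the glued lattice.'' Since the identification $A(\Pic(S))\cong A(T(S))$ in the excerpt is defined to be the gluing isomorphism coming from $H^2(S,\ZZ)$ itself, this is automatic. Therefore the proof is: invoke primitivity of $\Pic(S)$ in the unimodular lattice $H^2(S,\ZZ)$, recall that $H^2(S,\ZZ)$ is recovered from the glued pair $(\Pic(S),T(S))$ via the identification $A(\Pic(S))\cong A(T(S))$, note that $(\varphi_1,\varphi_2)$ respects this identification by hypothesis, and conclude by \cite[Corollary 1.5.2]{Nikulin2} that $\varphi_1\oplus\varphi_2$ extends to an isometry $\varphi$ of $H^2(S,\ZZ)$.
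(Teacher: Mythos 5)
Your proposal is correct and follows essentially the same route as the paper, which simply invokes \cite[Corollary 1.5.2]{Nikulin2}: you unwind that citation by noting that $\Pic(S)$ is primitive in the unimodular lattice $H^2(S,\ZZ)$ with orthogonal complement $T(S)$, that the overlattice is the preimage of the graph of the gluing anti-isometry $A(\Pic(S))\cong A(T(S))$, and that the hypothesis $\overline{\varphi_1}=\overline{\varphi_2}$ is exactly the compatibility needed for $\varphi_1\oplus\varphi_2$ to preserve $H^2(S,\ZZ)$ inside $H^2(S,\QQ)$. The directional check you flag (that the identification in the statement is the gluing isomorphism itself) is indeed the only point needing care, and you handle it correctly.
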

	
	We end this subsection with the global Torelli Theorem, which allows us to study  automorphisms  of $S$ via isometries of $H^2(S,\ZZ)$. A reference for it is {\cite[Chapter 7, Theorem 5.3]{Huybrechts}}.
	
	\begin{thm}[Global Torelli Theorem]\label{thm:torelli} Let $S$ be a K3 surface.
		Let $\varphi:H^{2}(S,\mathbb{Z})\longrightarrow H^2(S,\mathbb{Z})$ be a Hodge isometry sending an ample class to an ample class. Then there exists a unique automorphism $f$ of $S$ such that $f^*=\varphi$.    
	\end{thm}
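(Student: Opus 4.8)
The plan is to split the statement into an existence part and a uniqueness part. Uniqueness will follow from the faithfulness of the natural action of $\Aut(S)$ on $H^2(S,\ZZ)$, which can be proved directly. Existence will be reduced, via markings, to the surjectivity of the period map for marked K3 surfaces together with an analysis of its (non-separated) fibers — the genuinely deep input.

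For uniqueness, suppose $f,g\in\Aut(S)$ satisfy $f^*=g^*=\varphi$ and set $h\defeq g^{-1}\circ f$, so that $h^*=\id$ on $H^2(S,\ZZ)$; it suffices to show $h=\id$. Since $h$ fixes an ample class, $h$ has finite order and preserves a projective embedding of $S$. Because $h^*$ is the identity on $H^{2,0}(S)=\CC\cdot\omega_S$, the automorphism $h$ is symplectic. A nontrivial symplectic automorphism of finite order on a K3 surface has only isolated fixed points, at most $8$ of them, each contributing $+1$ to the topological Lefschetz number; but $h^*=\id$ forces the Lefschetz number to equal $\chi_{\mathrm{top}}(S)=24$, a contradiction. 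Hence $h=\id$ and $f=g$.

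For existence, let $\Lambda$ denote the K3 lattice and fix a marking $\phi\colon H^2(S,\ZZ)\xrightarrow{\ \sim\ }\Lambda$. Put $\psi\defeq\phi\circ\varphi^{-1}$, a second marking of $S$. Since $\varphi$ is a Hodge isometry it scales $\omega_S$, so $(S,\phi)$ and $(S,\psi)$ have the same period point in the period domain $D_\Lambda$. Moreover $\varphi^{-1}$ sends ample classes to ample classes, so for any ample $\beta$ on $S$ the class $\varphi^{-1}(\beta)$ is again ample and $\phi(\varphi^{-1}(\beta))=\psi(\beta)$; thus the two markings carry Kähler cones of $S$ to overlapping chambers inside $\Lambda\otimes\RR$. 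The crucial input is then the effective Torelli theorem for marked K3 surfaces: two marked K3 surfaces with the same period point and with overlapping marked Kähler cones are isomorphic via a unique isomorphism identifying the markings. Granting this and applying it to $(S,\psi)$ and $(S,\phi)$ produces $f\in\Aut(S)$ with $\phi\circ f^*=\psi$, i.e.\ $f^*=\phi^{-1}\circ\psi=\varphi$.

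The hard part is precisely the effective Torelli theorem for marked K3 surfaces. The approach I would take is to invoke surjectivity of the period map (Kulikov, Persson–Pinkham, Todorov, Looijenga) to realize the common period as the period of a family over a disc with constant period, and then run a deformation/limit argument — the twistor-space density argument, or equivalently Burns–Rapoport's analysis of the non-Hausdorff fibers $\mathcal{P}^{-1}(x)$ — to extract the desired isomorphism, using that the ample cone of $S$ is a fundamental chamber for the Weyl group acting on the positive cone of $\Pic(S)$ in order to control $(-2)$-classes. Everything lattice-theoretic here is formal; the step that upgrades an abstract Hodge isometry to an honest morphism of varieties is where all the depth lies. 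A complete treatment follows Huybrechts, \emph{Lectures on K3 surfaces}, Ch.~7; for projective K3 surfaces one may alternatively run the original Piatetski-Shapiro–Shafarevich argument via Kummer surfaces and a density argument.
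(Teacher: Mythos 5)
The paper offers no proof of this statement at all: it is quoted as the classical Global Torelli Theorem with a pointer to Huybrechts, \emph{Lectures on K3 surfaces}, Ch.~7, Thm.~5.3, so there is no in-paper argument to compare yours against. Your sketch is a correct outline of the standard proof, and its genuinely deep inputs (surjectivity of the period map and the Burns--Rapoport strong Torelli theorem for marked K3 surfaces) are exactly the content of the reference the paper cites, so in substance you are deferring to the same literature. Your uniqueness argument is fine as stated: $h^*=\id$ forces $h$ symplectic of finite order, a nontrivial such automorphism has only isolated fixed points, at most $8$, each of local index $+1$, contradicting the Lefschetz number $24$.

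Two small repairs in the existence step. First, with $\psi\defeq\phi\circ\varphi^{-1}$, an isomorphism identifying the markings satisfies $\phi\circ f^*=\psi$, i.e.\ $f^*=\phi^{-1}\circ\psi=\varphi^{-1}$, not $\varphi$ as you wrote; either set $\psi\defeq\phi\circ\varphi$ or replace $f$ by $f^{-1}$ at the end. Second, you use that $\varphi^{-1}$ carries ample classes to ample classes, whereas the hypothesis only provides one ample class $\alpha$ with $\varphi(\alpha)$ ample; this upgrade is true but needs the standard lattice-theoretic step (a Hodge isometry moving some ample class into the ample cone preserves the positive cone and the chamber decomposition by $(-2)$-classes, hence maps the ample cone onto the ample cone), which you should state explicitly since it is also what guarantees the ``overlapping chambers'' condition you feed into the marked Torelli theorem. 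With these adjustments the proposal is a faithful reduction of the statement to the cited classical results.
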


	\subsection{Curves and Linear Systems on quartic surfaces}
	
	In this subsection we gather some results about curves and linear systems on K3 surfaces that we will use in the following sections. 
	
	\begin{thm}[{\cite[Theorem 1]{MoriK3}, \cite[Theorem 1.1]{Knutsen}}]
		\label{thm:gdBounds}
		Let $(g,d)$ be a pair of integers, with $g\geq 0$ and $d>0$.
		Then there exists a smooth curve $C$ of genus $g$ and degree $d$ contained in some smooth quartic surface $S$ if and only if $g= \frac{d^2}{8} + 1$, or $g < \frac{d^2}{8}$ and $(g,d)\neq (3,5)$.
		Moreover, if $(g,d)$ is not the degree and genus of the complete intersection of $S$ with another surface in $\PP^3$, then $g < \frac{d^2}{8}$, and $C$ and $S$ can be chosen so that $\Pic(S) = \ZZ H \oplus \ZZ C$, where $H$ is the class of a hyperplane section.
	\end{thm}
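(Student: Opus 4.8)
The plan is to reduce the statement to lattice theory on K3 surfaces, using adjunction, the Hodge index theorem, Theorem~\ref{thm:K3andlattices}, and the classical criteria of Saint--Donat for base-point-freeness, very ampleness and smoothness of members of a linear system on a K3 surface.

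\textbf{Necessity.} Suppose $C\subset S$ is a smooth curve of genus $g$ and degree $d=C\cdot H$ on a smooth quartic $S$; then $S$ is a K3 surface with $H^2=4$, and adjunction gives $C^2=2g-2$. Since $\Pic(S)$ has signature $(1,\rho(S)-1)$ and $H^2>0$, the Hodge index theorem forces the Gram matrix $\bigl(\begin{smallmatrix}4 & d\\ d& 2g-2\end{smallmatrix}\bigr)$ of the sublattice $\langle H,C\rangle$ to have determinant $\leq 0$, with equality exactly when $H$ and $C$ are proportional in $\Pic(S)\otimes\QQ$. If they are proportional, then using that $H$ is primitive (a relation $H=mH'$ would give $m^2\mid 4$, and $m=2$ is impossible on an even lattice) one gets $4\mid d$ and $C=\tfrac d4H$, hence $g=\tfrac{d^2}{8}+1$, and $C$ is the complete intersection of $S$ with a surface of degree $d/4$. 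Otherwise $\det<0$, i.e.\ $g\leq\tfrac{d^2}{8}$. The value $g=\tfrac{d^2}{8}$ (which forces $d=4k$) is impossible: then $\delta:=C-kH$ has $\delta^2=-2$ and $\delta\cdot H=0$, so by Riemann--Roch one of $\pm\delta$ is effective, contradicting that a nonzero effective divisor pairs positively with the very ample $H$. Finally $(g,d)=(3,5)$ is impossible: $\delta:=C-H$ has $\delta^2=-2$ and $\delta\cdot H=1$, so $\delta$ is effective, while $\delta\cdot C=-1<0$; since $C$ is irreducible it must be a component of $\delta$, and iterating, $\delta-nC$ is effective for every $n\geq0$, contradicting $(\delta-nC)\cdot H=1-5n<0$ for $n\geq1$. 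Together with the existence part below, this gives the ``if and only if'' and the first assertion of the ``moreover'' clause.

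\textbf{Sufficiency.} If $g=\tfrac{d^2}{8}+1$, write $d=4k$ and take a general quartic $S$ with $\Pic(S)=\ZZ H$; a general member of $|kH|$ is by Bertini a smooth irreducible curve of degree $4k=d$ and genus $8k^2+1=g$. Now assume $g<\tfrac{d^2}{8}$ and $(g,d)\neq(3,5)$, and let $M$ be the rank-two lattice with Gram matrix $\bigl(\begin{smallmatrix}4 & d\\ d& 2g-2\end{smallmatrix}\bigr)$. It is even, $\det M=8g-8-d^2<0$, so $M$ has signature $(1,1)$, and $g<\tfrac{d^2}{8}$ gives $\disc(M)=d^2+8-8g\geq 9$. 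By Theorem~\ref{thm:K3andlattices}, together with the surjectivity of the period map and the description of the ample cone by $(-2)$-walls, one can choose a K3 surface $S$ with a marking $\Pic(S)\cong M$ under which the first basis vector $H$ (of square $4$) is ample: this is possible because a $(-2)$-class $xH+yC$ orthogonal to $H$ would satisfy $y^2\disc(M)=8$, impossible since $\disc(M)\geq9$, so $H^{\perp}\cap M$ contains no $(-2)$-class and $H$ lies in the interior of a chamber of the positive cone. By Saint--Donat's criterion $H$ is then very ample: for an ample class of square $4$ the only obstruction is an isotropic class $E\in\Pic(S)$ with $E\cdot H\in\{1,2\}$, which would make $\langle H,E\rangle$ a sublattice of discriminant $1$ or $4$, forcing $\disc(M)\mid 4$, impossible. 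Thus $\varphi_{|H|}$ realizes $S$ as a smooth quartic in $\PP^3$. It remains to choose $S$ so that in addition the second basis vector $C$ (with $C^2=2g-2$, $C\cdot H=d$) is represented by a smooth irreducible curve; then $\Pic(S)=\ZZ H\oplus\ZZ C$. It suffices to arrange that $C$ is nef: if $g\geq2$ then $C^2>0$ and the absence of an isotropic $E$ with $E\cdot C=1$ (again forbidden since $\disc(M)\geq9$) makes $|C|$ base-point free, so a general member is smooth by Bertini and irreducible because $h^1(\OO_S(C))=0$; if $g=1$ then $C^2=0$ is primitive, so $|C|$ is an elliptic pencil with smooth general member; and if $g=0$ one checks directly that the unique divisor in $|C|$ is an irreducible $(-2)$-curve. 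That $C$ can be made nef while $H$ is ample amounts to checking that no $(-2)$-class $\delta\in M$ satisfies $(\delta\cdot H)(\delta\cdot C)<0$, i.e.\ that $H$ and $C$ lie in the closure of a common chamber of the positive cone of $M$.

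\textbf{Main obstacle.} The heart of the argument is this last point: proving that, for every $(g,d)$ with $0\leq g<\tfrac{d^2}{8}$ and $(g,d)\neq(3,5)$, the lattice $M$ has no $(-2)$-class separating $H$ from $C$ (and, when $g=0$, that $C$ itself is a simple root of the chamber of $H$). This is a Pell-type Diophantine analysis of the integral solutions of $4x^2+2dxy+(2g-2)y^2=-2$, controlled by the bound $\disc(M)\geq9$, and it is exactly here that $(g,d)=(3,5)$ is singled out: there the class $C-H$ has $(C-H)\cdot H=1>0$ and $(C-H)\cdot C=-1<0$, so it separates $H$ from $C$ and $C$ can never be made nef --- which is precisely the failure already detected on the necessity side. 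A finite list of small-genus cases ($g\in\{0,1,2\}$, where $|C|$ is too small for a generic-member argument) must be inspected separately, but in each of them the same Diophantine bound resolves the matter.
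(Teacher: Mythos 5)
This statement is not proved in the paper at all: it is quoted verbatim from the literature, with the proof delegated to Mori \cite{MoriK3} and Knutsen \cite{Knutsen}. So the comparison here is with those sources, whose strategy (adjunction plus Hodge index for necessity; lattice-theoretic construction of a K3 with prescribed Picard lattice, made projective and equipped with the right curve via Saint--Donat's criteria, for sufficiency) is exactly the one you outline. Your necessity argument is complete and correct: the Hodge index dichotomy, the exclusion of $g=\tfrac{d^2}{8}$ via a $(-2)$-class orthogonal to $H$, and the exclusion of $(3,5)$ via the effective class $C-H$ with $(C-H)\cdot C=-1$ are all sound.

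The sufficiency direction, however, has a genuine gap, and it sits precisely where you place your ``main obstacle''. Granting the standard facts you invoke (Theorem~\ref{thm:K3andlattices}, choice of period/chamber so that $H$ is ample, Saint--Donat), what remains to prove is that for \emph{every} pair with $0\leq g<\tfrac{d^2}{8}$, $(g,d)\neq(3,5)$, the class $C$ lies in the closure of the chamber of $H$ (no $(-2)$-class $\delta$ with $\delta\cdot H>0>\delta\cdot C$), and, for $g=0$, that $C$ is represented by an \emph{irreducible} $(-2)$-curve. You assert this is ``a Pell-type Diophantine analysis controlled by the bound $\disc(M)\geq 9$'', but that bound cannot do the work by itself: the excluded pair $(3,5)$ also has $\disc(M)=9$ (as does, e.g., $(0,1)$, which succeeds), so whatever argument rules out separating roots must genuinely use the shape of the solutions of $4x^2+2dxy+(2g-2)y^2=-2$ relative to both $H$ and $C$, uniformly in infinitely many $(g,d)$. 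Note also that for $g\geq 1$ this nefness claim is \emph{equivalent} to the theorem (an irreducible curve with $C^2\geq 0$ is nef), so it cannot be waved through; it is the actual content of Mori's and Knutsen's papers, where it is handled by a careful case analysis (and where the $g=0$ and small-genus cases, which you also defer, require separate effective-decomposition arguments). As it stands your proposal is a correct reduction of the theorem to this lattice-theoretic core, plus a complete proof of necessity, but the core itself is asserted rather than proved.
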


	\begin{prop}
		[{\cite[Theorem 5]{MoriK3}}]\label{prop:criterioveryample} Let $D$ be a nef divisor on a K3 surface $S$ such that $D^2\geq4$. Then $D$ is very ample if and only if the following three conditions hold:
		\begin{enumerate}
			\item There is no irreducible curve $E\subset S$ such that $E^{2}=0$ and
			$D \cdot E\in\{1,2\}$.
			\item There is no irreducible curve $E\subset S$ such that $E^{2}=2$, and $D \sim 2E$.
			\item There is no irreducible curve $E\subset S$ such that $E^{2}=-2$ and
			$E \cdot D=0$.
		\end{enumerate}
	\end{prop}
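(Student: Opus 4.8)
\textbf{Proof proposal for Proposition~\ref{prop:criterioveryample}.}

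The plan is to recall Saint-Donat's classical analysis of linear systems on K3 surfaces and organize it into the stated trichotomy of obstructions. Since $D$ is nef with $D^2\geq 4>0$, it is in particular big, and $\dim|D| = \tfrac12 D^2 + 1 \geq 3$ by Riemann--Roch together with the vanishing $H^1(S,\OO_S)=H^2(S,\OO_S(-D))=0$ (Kawamata--Viehweg, using nefness and bigness of $D$). The first reduction is to show that the linear system $|D|$ is base-point free. The only way a nef divisor with positive self-intersection can fail to be base-point free on a K3 surface is if there is an irreducible curve $E$ with $E^2=0$ and $D\cdot E = 1$; one shows in that case that $|D|$ has a fixed component, and conversely if no such $E$ exists then $|D|$ is free. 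This is exactly where condition (1) enters, in its ``$D\cdot E = 1$'' half. So after this step I may assume $\varphi_{|D|}\colon S\to \PP^N$ is a morphism, and the question becomes whether it is a closed embedding.

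Next I would invoke the standard dichotomy for the morphism $\varphi = \varphi_{|D|}$: either $\varphi$ is birational onto its image (the ``non-hyperelliptic'' case) or $\varphi$ is a degree-$2$ map onto a surface of minimal degree in $\PP^N$ (the ``hyperelliptic'' case). The hyperelliptic case is precisely obstructed by the remaining possibilities in (1) and (2): after analyzing the image, one finds it is either a rational normal scroll or the Veronese surface $\PP^2\hookrightarrow\PP^5$, and pulling back the ruling (resp. the hyperplane) produces either an elliptic curve $E$ with $E^2=0$, $D\cdot E = 2$, or a curve $E$ with $E^2=2$ and $D\sim 2E$. Conversely each of these configurations manifestly forces $\varphi$ to have degree $2$. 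So under the hypotheses (1) and (2), $\varphi$ is birational onto its image $\varphi(S)$.

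Finally, assuming $\varphi$ is birational onto its image, I claim $\varphi$ is an embedding if and only if there is no $(-2)$-curve $E$ with $D\cdot E = 0$. If such an $E$ exists, then $D$ is not ample (it is trivial on $E$), so $\varphi$ contracts $E$ and cannot be an embedding. For the converse, the birational morphism $\varphi$ is the contraction of exactly the curves on which $D$ is zero; by the adjunction/genus formula on a K3 surface, any irreducible curve $E$ with $D\cdot E = 0$ satisfies $E^2 = -2$ (it cannot have $E^2\geq 0$, as then $E$ would move and meet the big divisor $D$ positively, and $E^2$ is even). Hence condition (3) says $D$ is strictly positive on every curve, so $D$ is ample by the Nakai--Moishezon criterion, $\varphi$ is finite, and a finite birational morphism from a normal variety is an isomorphism onto its image. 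Collecting the three steps: (1), (2), (3) together are equivalent to $D$ very ample, which is the assertion.

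The main obstacle is the hyperelliptic case in the second paragraph: ruling it out requires the precise classification of surfaces of minimal degree and a careful pullback computation to see that each such image forces one of the listed curve classes $E$, including the subtlety that separates the $D\cdot E=2$ elliptic-pencil case from the $D\sim 2E$ Veronese case. The base-point-freeness reduction and the final Nakai--Moishezon argument are comparatively routine once Saint-Donat's structural results are available.
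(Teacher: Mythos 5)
The paper does not prove this statement at all: it is quoted verbatim from Mori's Theorem 5, which in turn rests on Saint-Donat's analysis, so the relevant comparison is with that classical argument --- and your first two steps (base-point freeness failing exactly for an $E$ with $E^2=0$, $D\cdot E=1$, and the hyperelliptic dichotomy governed by $E^2=0$, $D\cdot E=2$ or $D\sim 2E$ with $E^2=2$) do follow its skeleton correctly, as does the necessity direction of all three conditions.

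However, your final step has a genuine gap. From conditions (1)--(3) you get that $\varphi=\varphi_{|D|}$ is a finite morphism (Nakai--Moishezon, as you say) that is birational onto its image, and you conclude that it is an isomorphism onto its image because ``a finite birational morphism from a normal variety is an isomorphism onto its image.'' That statement is false: the normalization of a surface with a cusp or with two branches glued along a curve is a finite birational morphism from a normal (even smooth) variety which is not an isomorphism. Zariski's main theorem requires the \emph{target} to be normal, and the normality of $\varphi(S)$ is precisely what is not yet known at this point --- a finite birational $\varphi$ could still identify two points or kill a tangent direction, which is exactly the difference between ample and very ample. Closing this gap is the substantive content of the theorem in the birational case: Saint-Donat does it by showing that $|D|$ separates points and tangent vectors, via the restriction $H^0(S,D)\twoheadrightarrow H^0(C,\omega_C)$ to curves $C\in |D|$ and a proof that conditions (1) and (2) exclude hyperelliptic-type behavior of these curves (equivalently, via the projective normality statement that this paper cites separately as \cite[Theorem 6.1]{SD74}, which makes the image normal and only then allows the Zariski argument). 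As written, your proof establishes that $D$ is ample and $\varphi$ is finite birational, but not that $D$ is very ample.
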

	\begin{lem}\label{lem:noFixedPart}
		Let $D \neq 0$ be an effective divisor on a K3 surface $S$ such that $D^2 \geq 0$, and let $H$ be an ample class.
		Write $D = M + F$, where $M$ and $F$ are the mobile and fixed parts of $D$, respectively.
		If $M$ and $H$ are not proportional and $F \neq 0$, then $M \not\in \ZZ H \oplus \ZZ D \subset \Pic(S)$.
		In particular, if $S$ is a smooth quartic surface with $\Pic(S) = \ZZ H \oplus \ZZ D$, where $H$ is the class of the hyperplane section and $D$ is a divisor with $D^2\geq 0$, then $D$ has no fixed component,
		unless $\disc(S)=9$ and $D$ is not nef.
	\end{lem}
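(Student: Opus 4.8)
The plan is to prove the two statements in turn, with the first (the lattice-theoretic dichotomy) doing the real work.

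\textbf{Step 1: the general statement about mobile and fixed parts.} Suppose $D = M + F$ with $M$ mobile, $F\neq 0$ effective, and $M$ not proportional to $H$. I want to show $M\notin \ZZ H\oplus \ZZ D$. Assume for contradiction $M = aH + bD$ in $\Pic(S) = \ZZ H \oplus \ZZ D$ for integers $a,b$ (this is where the hypothesis $\Pic(S)=\ZZ H\oplus\ZZ D$ is implicitly being used in the ``in particular'' — for the first assertion I should instead phrase it as: if $M$ lies in the rank-two sublattice $\ZZ H\oplus\ZZ D$, derive a contradiction). Then $F = D - M = -aH + (1-b)D$ is also in that sublattice. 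The idea is to play off three facts: $M$ is mobile hence $M^2\geq 0$ and $M\cdot H > 0$; $F$ is effective and nonzero, so $F\cdot H > 0$ and (since on a K3 an effective divisor with negative self-intersection is a sum of $(-2)$-curves) either $F^2 \geq 0$ or $F$ has a $(-2)$-curve component; and $M\cdot F \geq 0$ because $M$ is mobile and $F$ is effective (a mobile class meets any effective divisor non-negatively — indeed a general member of $|M|$ meets $F$ properly). Expanding $D^2 = M^2 + 2M\cdot F + F^2$ and using $H^2 = 4$ together with the Hodge index inequality on the rank-two lattice $\ZZ H\oplus\ZZ M$ (which has signature $(1,1)$ unless $M$ and $H$ are proportional, which we excluded), one obtains $(H\cdot M)^2 > H^2\cdot M^2 = 4M^2$, i.e.\ $M^2 < (H\cdot M)^2/4$. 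The main obstacle is organizing these inequalities into an actual contradiction rather than a near-miss; the key leverage is that $F\neq 0$ forces a strict separation, so I expect to argue that $M$, $F$ both nonzero with $M\cdot F\geq 0$ forces $M^2 < D^2$ and $F^2 < D^2$ while simultaneously the sublattice $\ZZ H\oplus \ZZ D$ has a fixed small discriminant, pinning down the possible $(a,b)$ to finitely many cases and eliminating them.

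\textbf{Step 2: deducing the ``in particular'' for quartic surfaces.} Now take $S$ a smooth quartic with $\Pic(S) = \ZZ H\oplus\ZZ D$, $H^2 = 4$, $D^2 \geq 0$, and suppose $D$ has a fixed component; I must show $\disc(S)=9$ and $D$ is not nef. Write $D = M+F$ as above. Since any mobile class proportional to $H$ would have to be $H$ itself (as $H$ is primitive in $\Pic(S)$ and mobile classes are effective with positive coefficients), the excluded case ``$M$ proportional to $H$'' would force $M = kH$ with $k\geq 1$ and then $F = D - kH$ effective — but an effective class of the form $D-kH$ combined with $D$ effective and $H^2=4$ can be ruled out or pushed into the exceptional case by a direct sign analysis. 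In all remaining cases Step 1 gives $M\notin \ZZ H\oplus\ZZ D = \Pic(S)$, which is absurd. Hence $F$ cannot exist unless we are in the degenerate situation; I then need to show that situation is exactly $\disc(S)=9$ with $D$ not nef. This is where I compute: the only way the argument of Step 1 fails is when the Hodge-index slack vanishes or when $F$ is forced to be a single $(-2)$-curve $E$ with $E\cdot H$ and $E\cdot D$ very small; writing out $\disc(\ZZ H\oplus \ZZ E) = -(4(E^2) - (H\cdot E)^2) = 8 + (H\cdot E)^2$ and matching against $\disc(S)$ being divisible appropriately forces $H\cdot E = 1$, giving $\disc = 9$, and the presence of a line $E$ with $D\cdot E < 0$ shows $D$ is not nef.

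\textbf{Step 3: bookkeeping.} Finally I would carefully check the boundary/degenerate cases: $M^2 = 0$ (then $|M|$ is an elliptic pencil and $M\cdot H \geq 2$, which tightens everything), $D^2 = 0$, and small values of $D\cdot H$. I expect the genuinely delicate point — and the main obstacle — to be showing that $\disc(S)=9$ is the \emph{only} surviving exception, i.e.\ that no other small discriminant admits a divisor $D$ with $D^2\geq 0$ and a fixed component lying in $\Pic(S)$; this requires knowing that a $(-2)$-curve $E\subset S$ with $\Pic(S)=\ZZ H\oplus \ZZ E'$ for some $E'$ and $H\cdot E = 1$ indeed occurs only for $\disc(S) = 9$, which follows from the discriminant computation above together with the classification of rank-two hyperbolic lattices containing a class of square $4$ and a class of square $-2$ meeting it in $1$.
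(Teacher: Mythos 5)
Your Step 1 does not contain the idea that actually makes the lemma work, and the inequality you plan to aim for goes in the wrong direction. The engine of the proof is cohomological: since $F$ is the fixed part, $h^0(S,M)=h^0(S,D)$; Riemann--Roch on the K3 gives $h^0(S,D)\geq \chi(S,D)=\tfrac{D^2}{2}+2$ (using $D$ effective, $D^2\geq 0$), while for the mobile part (nef and big) Kawamata--Viehweg vanishing gives $h^0(S,M)=\tfrac{M^2}{2}+2$. Hence $M^2\geq D^2$ --- the \emph{opposite} of the separation $M^2<D^2$ you say you expect to force. Combining $M^2\geq D^2$ with $H\cdot M\leq H\cdot D$ ($H$ ample, $F$ effective) and with the divisibility $\disc(\ZZ H\oplus\ZZ M)=k\,\disc(\ZZ H\oplus\ZZ D)$, $k\geq 1$, both discriminants positive by Hodge index, one gets $k(d^2-\alpha D^2)=d_M^2-\alpha M^2\leq d^2-\alpha D^2$ (here $d=H\cdot D$, $d_M=H\cdot M$, $\alpha=H^2$), forcing $k=1$ and $d_M=d$, i.e.\ $H\cdot F=0$, which contradicts $F\neq 0$. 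Without the $h^0$ comparison your listed facts ($M\cdot F\geq 0$, $F\cdot H>0$, Hodge index) are indeed only a ``near-miss''. Moreover, your fallback plan --- ``the sublattice $\ZZ H\oplus\ZZ D$ has a fixed small discriminant, pinning down finitely many $(a,b)$'' --- is not available: in the first statement $S$ is an arbitrary K3 and nothing bounds $\disc(\ZZ H\oplus\ZZ D)$, so there is no finite case analysis to run.

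Step 2 has a second genuine gap: you claim that matching $\disc(\ZZ H\oplus\ZZ E)=(H\cdot E)^2+8$ against divisibility by $\disc(S)$ ``forces $H\cdot E=1$''. It does not: when $\{H,F\}$ is itself a basis of $\Pic(S)$ one simply has $\disc(S)=(H\cdot F)^2+8$, and any value of $H\cdot F$ is a priori consistent with divisibility. What actually pins down the exceptional case in the paper is again the inequality $D^2\leq M^2$ applied with $M=\lambda H$, which yields $F^2\leq -2\lambda\, H\cdot F<0$; a discriminant comparison between $\ZZ H\oplus\ZZ\Gamma$ (for $\Gamma$ a component of $F$) and $\ZZ H\oplus\ZZ F$ then shows $F$ is a single irreducible curve, so $F^2=-2$, and $-2\leq -2\lambda H\cdot F$ forces $\lambda=H\cdot F=1$, giving $\disc(S)=9$ and $D\cdot F=(H+F)\cdot F=-1$, so $D$ is not nef. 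So both the uniqueness of the exception $\disc(S)=9$ and the non-nefness of $D$ rest on the same Riemann--Roch/vanishing input that is missing from your Step 1; as written, your argument cannot exclude, say, a reducible fixed part or a fixed curve with $H\cdot F\geq 2$ on surfaces of larger discriminant.
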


	\begin{proof}
		Set $d = H \cdot D$, $d_M = H\cdot M$ and $\alpha = H^2$. 
		Suppose that $M$ and $H$ are not proportional, and that $M$ lies in the lattice spanned by $H$ and $D$.   
		Then $H$ and $M$ span a sublattice of $\ZZ H \oplus \ZZ D$ of rank $2$ and so, 
		\[
		{d_M}^2 - \alpha M^2 = \disc \left( \ZZ H \oplus \ZZ M \right) = k\disc \left( \ZZ H \oplus \ZZ D \right) = k(d^2 - \alpha D^2)
		\]
		for some integer $k$.
		By the Hodge index theorem, we have $d^2 - \alpha D^2>0$ and  $k \geq 1$.
		Since $D = M + F$ and $H$ is ample, we have $d_M \leq d$.
		Moreover, $h^0(S,D) = h^0(S,M)$ and, since $D\neq 0$, we have
		\[
		h^0(S,M) = h^0(S,D) \geq \chi(S,D) = \frac{D^2}{2} + 2.
		\]
		Since $M$ is effective with no fixed component, it is nef and big, and so, by the Kawamata-Viewheg vanishing theorem we get
		\[ 
		h^0(S,M) = \frac{M^2}{2} + 2.
		\]
		Combining the inequalities we get
		\[
		k(d^2 - \alpha D^2) = d_M^2 - \alpha M^2 \leq d^2 - \alpha D^2,
		\]
		which can only be satisfied when $k = 1$ and, more importantly, when $d = d_M$, i.e.,  when $D$ has no fixed part.

		For the second part, write $D = M + F$, where $M$ and $F$ are the mobile and fixed parts of $D$, respectively.
		If $M$ and $H$ are not proportional, we conclude that $F=0$ by the first part.
		So we treat the case $M = \lambda H$, $\lambda\geq 1$.
		Suppose that $F \neq 0$. 
		It follows from the discussion above that 
		\[
		4\lambda^2+F^2+2\lambda H\cdot F = (\lambda H + F)^2 = D^2\leq M^2=(\lambda H)^2=4\lambda^2,
		\]
		and thus $F^2\leq - 2\lambda H\cdot F <0$. 
		Since $\{H,D\}$ span $\Pic(S)$, so does $\{H,F\}$.
		Let $\Gamma$ be a curve in the support of $F$.
		Developing the equality
		\[
		\disc\left(\ZZ H \oplus \ZZ \Gamma \right) = k \disc\left(\ZZ H \oplus \ZZ F \right)
		\]
		as above, we get that $k = 1$ and $H\cdot F = H\cdot \Gamma$, and so $F = \Gamma$, i.e.,  $F$ is irreducible and reduced.
		Moreover,
		\[
		-2=F^2\leq - 2\lambda H\cdot F  \implies \lambda = H\cdot F = 1
		\] 
		Computing the discriminant of $S$ with respect to $\{H,F\}$, we get that $\disc(S) = 9$.
		In that case, $D\cdot F = (H + F)\cdot F = -1$.
	\end{proof}
	
	Combining Lemma \ref{lem:noFixedPart} with \cite[Corollary 3.2]{SD74}, which says that a linear system on a K3 surface has no base points outside its fixed components, we get the following:
	
	\begin{cor}\label{cor:linearSystem}
		Let $S$ be a smooth quartic surface with $\Pic(S) = \ZZ H \oplus \ZZ D$, where $H$ is the class of a hyperplane section and $D$ is an effective nef divisor with $D^2 \geq 0$. 
		Then $|D|$ is base point free, and thus a general element in $|D|$ is a smooth curve.
	\end{cor}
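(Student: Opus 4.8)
The plan is to derive base point freeness of $|D|$ directly from Lemma~\ref{lem:noFixedPart} and \cite[Corollary 3.2]{SD74}, and then to invoke Bertini's theorem for the smoothness of a general member. Note first that since $\Pic(S)=\ZZ H\oplus \ZZ D$ has rank two, $D$ is nonzero and is not a multiple of $H$, so Lemma~\ref{lem:noFixedPart} is applicable to $D$.

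First I would show that $|D|$ has no fixed component. Write $D=M+F$ with $M$ the mobile part and $F$ the fixed part of $D$. By the second part of Lemma~\ref{lem:noFixedPart}, $F=0$ unless $\disc(S)=9$ and $D$ is not nef; since $D$ is nef by hypothesis, this exceptional case is excluded, and hence $F=0$. Next, by \cite[Corollary 3.2]{SD74} a complete linear system on a K3 surface has no base points lying outside its fixed component; as that fixed component is now empty, $|D|$ is base point free. One also records that, since $D\neq 0$ is effective with $D^2\geq 0$, Riemann--Roch on the K3 surface $S$ together with $h^2(S,\OO_S(D))=h^0(S,\OO_S(-D))=0$ gives $h^0(S,D)\geq \tfrac{1}{2}D^2+2\geq 2$, so $|D|$ has positive dimension.

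Finally, $|D|$ is a base point free linear system of positive dimension on the smooth projective surface $S$, so by Bertini's theorem a general member of $|D|$ is smooth, which is the desired conclusion. I do not anticipate a genuine obstacle: the substantive work is already packaged in Lemma~\ref{lem:noFixedPart} and in \cite{SD74}. The one point that deserves care is that the nef hypothesis on $D$ is exactly what disables the $\disc(S)=9$ exception in Lemma~\ref{lem:noFixedPart}; without nefness the conclusion can truly fail, e.g.\ for $D=H+F$ on a quartic of discriminant $9$, where $F$ is an irreducible curve with $F^2=-2$ and $H\cdot F=1$, in which case $F$ is a fixed component of $|D|$ and $D$ is not nef. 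One should also keep in mind that Bertini yields only smoothness of the general member, and that when $D^2=0$ this member is a disjoint union of smooth elliptic curves rather than an irreducible curve, so ``smooth curve'' is to be read in that mildly loose sense.
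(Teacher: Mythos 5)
Your proof is correct and follows essentially the same route as the paper: the paper obtains the corollary precisely by combining Lemma~\ref{lem:noFixedPart} (whose $\disc(S)=9$ exception is ruled out by the nefness of $D$) with \cite[Corollary 3.2]{SD74} and Bertini. Your closing caveat about $D^2=0$ is unnecessary, since $D$ is part of a basis of $\Pic(S)$ and hence primitive, so the general member of the free pencil $|D|$ is an irreducible smooth elliptic curve.
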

	
	\subsection{Aut-general K3 surfaces}\label{sec:aut_general}
	Let $S$ be a K3 surface, and write $H^0(S,\Omega_S^2) = \CC \cdot \omega_S$.
	It follows from Theorem \ref{thm:gluing} and Theorem \ref{thm:torelli} that any automorphism $g$ of $S$ is completely determined by its action on $\Pic(S)$ and its action on $\omega_S$ (see for instance \cite[Chapter 3, Lemma 3.3]{Huybrechts}).
	The action of $g$ on $\omega_S$ is always of the form $g^*\omega_S = \zeta\omega_S$ for some root of unity $\zeta$. 
	When $\zeta=1$, we say that $g$ is \emph{symplectic}, and when $\zeta=-1$, we say that $g$ is \emph{anti-symplectic}.
	We now introduce a distinguished finite index subgroup of $\Aut(S)$, and a generality condition for K3 surfaces that is particularly useful when studying their automorphisms. 
	
	\begin{defi}\label{def:Autgeneral}
		We introduce the subgroup of $\Aut(S)$ consisting of symplectic and anti-symplectic automorphisms:
		\[
		\Aut^{\pm}(S)\ := \ \big\{g \in \Aut(S) \ \big| \ g^*\omega_S = \pm\omega_S \big\}.
		\]
		By \cite[Theorem 10.1.2 a)]{Nikulin3}, $\Aut^{\pm}(S)$ is a finite index subgroup of $\Aut(S)$. 
		
		We say that $S$ is \emph{Aut-general} if $\Aut^{\pm}(S)=\Aut(S)$.
	\end{defi}
	
	\begin{rem}\label{rem:generalIsAutGeneral}
		As the name suggests, being Aut-general is a generality condition on the period domain of K3 surfaces with a fixed embedding of a lattice $L \hookrightarrow \Pic(S)$ (see \cite[Theorem 10.1.2 c)]{Nikulin3}). 
	\end{rem}
	
	Any K3 surface with odd Picard rank is Aut-general (see for instance \cite[Chapter 3, Corollary 3.5]{Huybrechts}).	Explicit examples of non-Aut-general K3 surfaces can be found, for instance, in \cite{ArtebaniSarti}. 
	We mention here the following two particularly interesting cases.
	The first is a smooth quartic surface $S\subset \PP^3$ with $\rho(S)=4$.
	The second is a K3 surface $S'$ with $\rho(S')=2$, which cannot be embedded as a quartic surface in $\PP^3$.
	We do not know of any example of a smooth quartic surface in $\PP^3$ of Picard rank $2$ that is not Aut-general, 
	however Corollary \ref{cor:actionDetermByPic} gives some restrictions on the automorphism group of such surfaces.

	\begin{ex}\label{ex:nonAutGeneral}
		Let $\zeta$ be a primitive 3-rd root of unity. 
		\begin{enumerate}
			\item Let $S\subset\PP^3$ be a smooth quartic surface given by a general equation of the form 
			\[F_4(x_0,x_1,x_2)+F_1(x_0,x_1,x_2)x_3^3=0,\] 
			where each $F_i$ is a homogeneous polynomial of degree $i$. By \cite[Theorem 3.3 and Proposition 4.9]{ArtebaniSarti}, $\rho(S)=4$.
			Consider the automorphism $g\in\Aut(S)$ defined by $g(x_0,x_1,x_2,x_3)=(x_0,x_1,x_2,\zeta x_3)$. 
			It has order three and acts on the 2-form $\omega_S$ by $g^*\omega_S=\zeta\omega_S$. 
			\item Let $S'\subset\PP^4$ be the complete intersection of a quadric and a cubic hypersurfaces given by general equations of the form:
			\[\left\{\begin{array}{l}
				F_2(x_0,...,x_3)=0 \\
				F_3(x_0,...,x_3)+x_4^3=0 \ ,
			\end{array}\right. \]
			where each $F_i$ is a homogeneous polynomial of degree $i$. By \cite[Theorem 3.3 and Proposition 4.7]{ArtebaniSarti}, $\rho(S')=2$.
			Consider the automorphism $g\in\Aut(S')$ defined by $g(x_0,x_1,x_2,x_3,x_4)=(x_0,x_1,x_2,x_3,\zeta x_4)$. 
			It has order three and acts on the 2-form $\omega_{S'}$ by $g^*\omega_{S'}=\zeta\omega_{S'}$.
		\end{enumerate}
	\end{ex}
	
	\subsection{K3 surfaces of Picard rank $2$}\label{sec:S_with_rho=2}
	In this subsection, we discuss K3 surfaces of Picard rank $2$ and their automorphism groups, with special attention to smooth quartic surfaces. 
	
	Let $S$ be a K3 surface with $\rho(S) = 2$. 
	By \cite[Proposition 3 and Proposition 4]{GLP}, the finite index subgroup $\Aut^\pm(S)$ of $\Aut(S)$ is isomorphic to one of the following groups: $\{id\}$, $\ZZ_2$, $\ZZ$ or $\ZZ_2\ast\ZZ_2$. Moreover, the finiteness of $\Aut^\pm(S)$, and so of $\Aut(S)$, is equivalent to the existence of a divisor class $D\in \Pic(S)$ with $D^2\in\{0,-2\}$.

	\begin{rem} \label{rem:kovacs}
		This characterization of the finiteness of $\Aut(S)$ for a K3 surface $S$ with $\rho(S) = 2$ admits the following geometric interpretation. 
		Denote by $R_1$ and $R_2$ the two extremal rays of the cone of curves $\NE(S)$ of $S$.
		By \cite[Corollary 2]{Kovacs}, one of the following conditions holds:
		\begin{enumerate}
			\item For $i=1,2$,  $R_i$ is spanned by the class of an irreducible curve $C_i$, with $C_i^2=0$ or $C_i^2=-2$.
			\item For $i=1,2$,  $R_i$ does not contain the class of any irreducible curve. 
		\end{enumerate}
		In case (1), any automorphism $g \in \Aut(S)$ acts on $\Pic(S)$ by either fixing or exchanging the classes $[C_1]$ and $[C_2]$, and
		so  $\Aut^\pm(S)$ is either $\{id\}$ or $\ZZ_2$. In case (2), $S$ does not contain any curve $C$ with $C^2=0$ or $-2$. 
	\end{rem}

	On the other hand, when $S$ is a smooth quartic surface, we have the following characterization of elements of finite order in $\Aut(S)$. 
	
	\begin{prop}[{\cite[Proposition 5.1.5]{Paiva}}, {\cite[Proposition 12, Remark 13]{paivaquedo}}]\label{prop:chracterization_of_involutions}
		Let  $S\subset \PP^3$ be a smooth quartic surface with $\rho(S) = 2$. Any non-trivial automorphism $g\in\Aut(S)$ of finite order is an involution. Moreover,
		there is a one-to-one correspondence between involutions in $\Aut(S)$ and ample divisor classes $A\in \Pic(S)$ with $A^2=2$. 
		
		Explicitly, given an involution $g\in \Aut(S)$, the associated ample class $A\in \Pic(S)$ is the generator of the rank one invariant sublattice $H^2(S,\ZZ)^g \defeq \left\{x\in H^2(S,\ZZ) \mid g^*x=x \right\} \subset \Pic(S)$.
		Conversely, given an ample class $A\in \Pic(S)$ with $A^2=2$, the linear system $|A|$ defines a double cover $S\to \PP^2$, and
		the associated involution $g\in \Aut(S)$ is the corresponding deck transformation.
		It acts on $H^2(S,\ZZ)$ as the reflection along the line generated by $A$:
		\[
		g^*\alpha = (A\cdot \alpha)A - \alpha.
		\]
	\end{prop}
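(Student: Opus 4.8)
The plan is to set up a bijection between non-trivial involutions $g\in\Aut(S)$ and ample classes $A\in\Pic(S)$ with $A^2=2$, passing in both directions through the Global Torelli Theorem (Theorem~\ref{thm:torelli}) and the gluing of isometries (Theorem~\ref{thm:gluing}), and using the elementary lattice theory of involutions on the unimodular lattice $H^2(S,\ZZ)$.

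For the assignment $g\mapsto A$, I would start from a non-trivial involution $g$ of $S$. Since $(g^*)^2$ fixes $\omega_S$, we have $g^*\omega_S=\pm\omega_S$; if $g^*\omega_S=\omega_S$ then $g$ is symplectic, so its coinvariant lattice lies in $\Pic(S)$ and has rank $8$, contradicting $\rho(S)=2$. Hence $g^*\omega_S=-\omega_S$, so $g^*$ acts as $-\id$ on the transcendental lattice $T(S)$ (an irreducible Hodge structure containing $\omega_S$), and the invariant lattice $M:=H^2(S,\ZZ)^g$ is contained in $\Pic(S)$. Being the invariant lattice of an involution of a unimodular lattice, $M$ is $2$-elementary, i.e.\ $A(M)\cong(\ZZ/2\ZZ)^{\ell}$. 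Averaging an ample class with its $g^*$-image shows that $M$ meets $\Amp(S)$, so $M$ is hyperbolic of rank $1$ or $2$. If $\rank M=2$ then $M=\Pic(S)$ by primitivity, so $\Pic(S)$ would be $2$-elementary and $g^*$ trivial on $\Pic(S)$; this does not occur in our setting (for a quartic surface with $\disc(S)$ as in Theorem~\ref{introthm}, a direct check of the Gram matrix $\left(\begin{smallmatrix}4&b\\ b&2c\end{smallmatrix}\right)$ shows $A(\Pic(S))$ is never $2$-elementary). Hence $M=\ZZ A$ with $A$ ample after possibly replacing it by $-A$, and $2$-elementarity of $A(M)\cong\ZZ/A^2\ZZ$ forces $A^2=2$. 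Finally, $g^*|_{\Pic(S)}$ is an involution fixing $A$ which is not the identity (else $g=\id$ by Theorem~\ref{thm:torelli}); since an involution of a rank-$2$ lattice fixing a primitive vector $A$ acts by $\pm1$ on the rank-$1$ quotient, and the $+1$ case is the identity, it must be the reflection $\alpha\mapsto(A\cdot\alpha)A-\alpha$.

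For the converse, let $A\in\Pic(S)$ be ample with $A^2=2$. Then $h^0(S,A)=3$, the system $|A|$ is base point free, and $\varphi_{|A|}$ is a degree-$2$ morphism $\pi\colon S\to\PP^2$ branched over a plane sextic; let $g_A$ be the deck transformation. Then $g_A$ is a non-trivial involution with $g_A^*A=A$, and $g_A^*\omega_S=-\omega_S$ by the standard computation $\pi_*\omega_S\cong\OO_{\PP^2}(-3)\oplus\OO_{\PP^2}$. As in the previous paragraph, $g_A^*=-\id$ on $T(S)$ and $g_A^*|_{\Pic(S)}$ is the reflection $\alpha\mapsto(A\cdot\alpha)A-\alpha$, whose invariant sublattice is exactly $\ZZ A$. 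It then remains to see that the two assignments are mutually inverse: if $g$ is a non-trivial involution with associated class $A$, then $g$ and $g_A$ induce the same Hodge isometry of $H^2(S,\ZZ)$ — namely $-\id$ on $T(S)$ and the reflection in $A$ on $\Pic(S)$ — so $g=g_A$ by the uniqueness in Theorem~\ref{thm:torelli}; and starting from an ample $A$ with $A^2=2$, the invariant sublattice of $g_A$ is $\ZZ A$, whose ample generator is $A$.

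The hard part is the forward direction, and specifically two points: ruling out that $g^*$ acts trivially on $\Pic(S)$, and then pinning down $A^2=2$ rather than a larger even square. Both hinge on the $2$-elementarity of invariant lattices of involutions on the unimodular lattice $H^2(S,\ZZ)$, and the first is exactly where one uses that $S$ is a quartic surface with the discriminants occurring in Theorem~\ref{introthm}, rather than an arbitrary K3 surface of Picard rank $2$ (for which a non-symplectic involution acting trivially on a $2$-elementary $\Pic(S)$ could otherwise appear).
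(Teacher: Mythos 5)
The paper itself offers no argument for this proposition---it is quoted from \cite[Proposition 12]{paivaquedo}---so there is no internal proof to compare with. Your lattice-theoretic route (an involution is non-symplectic because the coinvariant lattice of a symplectic involution has rank $8>\rho(S)$; hence $g^*=-\id$ on $T(S)$ by irreducibility; the invariant lattice is $2$-elementary, hyperbolic, and contained in $\Pic(S)$; rank one forces $A^2=2$ and the reflection formula; the converse via the double plane, gluing and the Torelli theorem) is the natural one, and its core steps are sound.

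The genuine issue is one of scope. The proposition, as stated and as used in Proposition~\ref{thm:possibilitiesForAut}, concerns an arbitrary Aut-general K3 surface with $\rho(S)=2$, whereas your exclusion of the rank-two invariant lattice appeals to $S$ being a quartic with discriminant as in Theorem~\ref{introthm}. As you yourself observe, that case cannot simply be dismissed: if $\Pic(S)$ is $2$-elementary (e.g.\ $\Pic(S)\cong U$ for a very general elliptic K3 with a section), the isometry acting as the identity on $\Pic(S)$ and as $-\id$ on $T(S)$ glues (Theorem~\ref{thm:gluing}) and is realized by an automorphism (Theorem~\ref{thm:torelli})---concretely, the fiberwise inversion---which is a nontrivial involution on an Aut-general surface admitting no ample class of square $2$. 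So your argument does not prove the statement in the stated generality; what it proves is the statement under the extra hypothesis that $\Pic(S)$ is not $2$-elementary, which is presumably implicit in the source (whose setting is quartic surfaces). Even granting that, your formulation of the exclusion is narrower than necessary and slightly misleading: the relevant fact is simply that a rank-two even $2$-elementary lattice has discriminant group of order $1$ or $4$, while \emph{every} smooth quartic with $\rho(S)=2$ has discriminant at least $9$; no appeal to the particular list of discriminants in Theorem~\ref{introthm} is needed. Two smaller repairs: the parenthetical ``else $g=\id$ by Torelli'' is not the right reason that $g^*|_{\Pic(S)}\neq\id$ (an isometry trivial on $\Pic(S)$ and equal to $-\id$ on $T(S)$ is not the identity on $H^2$, so Torelli gives nothing here); in the rank-one case nontriviality on $\Pic(S)$ is automatic. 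Also, base-point freeness of $|A|$ for ample $A$ with $A^2=2$ should be justified via Saint-Donat (ampleness excludes the configuration $A\sim 2E+\Gamma$ with $A\cdot\Gamma=0$), and the non-symplecticity of the deck involution is cleaner from the fact that a symplectic involution has only isolated fixed points while the covering involution fixes the ramification curve; your $\pi_*\omega_S$ computation as written does not yet identify which summand is the invariant one.
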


	It should be noted that both the assumption on the Picard rank and on $S$ being a quartic surface in $\PP^3$ are essential in Proposition~\ref{prop:chracterization_of_involutions}, as Example \ref{ex:nonAutGeneral} demonstrates.

	\begin{cor}\label{cor:actionDetermByPic}
		Let $S\subset \PP^3$ be a smooth quartic surface with $\rho(S)=2$.
		\begin{enumerate}
			\item\label{it:actionDetermByPic1} If $\Aut^{\pm}(S)$ is a finite group, then $\Aut(S) = \Aut^{\pm}(S)$.
			\item\label{it:actionDetermByPic2} If two automorphisms $\chi, \tau\in \Aut(S)$ induce the same action on $\Pic(S)$, then
			$\chi = \tau$.
			\item\label{it:noRealizationByAutos} $\Aut(\PP^3;S) =\{id\}$.
		\end{enumerate}
	\end{cor}
	
	\begin{proof}
		Since $\Aut^{\pm}(S)$ is a subgroup of finite index in $\Aut(S)$, if
		$\Aut^{\pm}(S)$ is finite, then so is $\Aut(S)$.
		By Proposition \ref{prop:chracterization_of_involutions}, all non-trivial elements in $\Aut(S)$ are involutions, and so can only act on $\omega_S$ by multiplication by $\pm 1$.
		Therefore $\Aut(S) = \Aut^{\pm}(S)$.
		
		As for \eqref{it:actionDetermByPic2}, suppose that $\chi$ and $\tau$ induce the same action on $\Pic(S)$. 
		Write  $\chi^*\omega_S = \zeta_\chi \omega_S$ and $\tau^*\omega_S = \zeta_\tau\omega_S$ for suitable roots of unity $\zeta_\chi$ and $\zeta_\tau$.
		Then $f \defeq \chi \tau^{-1}$ acts trivially on the Picard group and by multiplication by $\zeta_{\chi}\zeta_{\tau}^{-1}$ on $\omega_S$, in particular it has finite order.        
		If $f$ were non-trivial, by Proposition \ref{prop:chracterization_of_involutions}, its invariant lattice would have rank one, a contradiction.
		Thus $f = id$ and consequently $\chi = \tau$.
		
		To prove \eqref{it:noRealizationByAutos}, let $g \in \Aut(\PP^3;S)$, and suppose that $g \neq id$.  
		Then $g$ has finite order by \cite[Theorem 1]{MM64}.
		By Proposition \ref{prop:chracterization_of_involutions}, $\varphi:=g|_S$ is an involution and its invariant lattice is generated by an ample divisor $A$, with $A^2 = 2$.
		Since $\varphi$ is the restriction of an automorphism of $\PP^3$ stabilizing $S$, it preserves the hyperplane class $H$ and so $H \in \langle A \rangle$. 
		Therefore 
		\[
		H = kA \implies 4 = H^2 = k^2A^2 = 2k^2,
		\]
		which is absurd. 
		We conclude that $g = id$.
	\end{proof}

	Putting \cite[Proposition 3 and Proposition 4]{GLP}, Proposition~\ref{prop:chracterization_of_involutions} and Corollary \ref{cor:actionDetermByPic} together yields the following:

	\begin{prop}
		\label{prop:possibilitiesForAut}
		Let $S\subset \PP^3$ be smooth quartic surface with $\rho(S) = 2$. 
		Then we have the following four possibilities for the subgroup $\Aut^\pm(S)$ of $\Aut(S)$:
		
		\setlength{\tabcolsep}{0pt}\renewcommand{\arraystretch}{1.1}
		\begin{tabular}{lll}
			$\Aut^\pm(S) = \{id\}$ & $\iff$ & \,$\mspace{2mu} \exists D\in \Pic(S)$ with $D^2\in\{0,-2\}$ and $\not\exists A\in \Pic(S)$ ample with $A^2=2$;
			\\
			$\Aut^\pm(S) = \ZZ_2$ & $\iff$ & \,$\mspace{2mu} \exists D\in \Pic(S)$ with $D^2\in\{0,-2\}$ and \,$\mspace{2mu}\exists A\in \Pic(S)$ ample with $A^2=2$;
			\\
			$\Aut^\pm(S) = \ZZ_2\ast\ZZ_2$ & $\iff$ &$\not\exists D\in \Pic(S)$ with $D^2\in\{0,-2\}$ and \,$\mspace{2mu}\exists A\in \Pic(S)$ ample with $A^2=2$;
			\\
			$\Aut^\pm(S) = \ZZ$ & $\iff$ & $\not\exists D\in \Pic(S)$ with $D^2\in\{0,-2\}$ and $\not\exists A\in \Pic(S)$ ample with $A^2=2$.
		\end{tabular}   \\

		Moreover, in the first two cases, $\Aut(S) = \Aut^{\pm}(S)$.
		
	\end{prop}

	Let $S\subset \PP^3$ be a smooth quartic surface with $\rho(S) = 2$, and denote by  
	$H$ the class of a hyperplane section of $S$. Since $H$ is a primitive element of $\Pic(S)$, 
	we can write the Picard lattice as $\Pic(S)=\ZZ H\oplus \ZZ W$ for some divisor class $W$. 
	In this basis, the intersection product is given by
	\begin{equation}\tag{$\star$}\label{matrixbilinearform}
		Q \ = \ \begin{pmatrix}
			4& b \\ 
			b & 2c
		\end{pmatrix}. 
	\end{equation}
	Notice that the discriminant $r = \disc(S) = b^2 - 8c$ is a positive integer, since $\Pic(S)$ has signature $(1,1)$. 
	Moreover, $r \equiv b^2 \equiv 0,1,4 \ (\modd{8})$.

	\begin{rem}\label{rmk:discriminant&Pell}
		Let $S\subset \PP^3$ be a smooth quartic surface with $\rho(S) = 2$ and discriminant $r$, and write 
		$\Pic(S)=\ZZ H\oplus \ZZ W$ as above. 
		The discriminant $r$ carries information about which curves exist on $S$.
		Indeed, given a divisor class $D=nH+mW$ on $S$, we have $4D^2=d^2-rm^2$, where $d=D\cdot H$. 
		Hence, the existence of a divisor $D$ on $S$ with $D^2=k$ is equivalent to the existence of an integer solution of the generalized Pell equation $x^2-ry^2=4k$.  
	\end{rem}

	In Section~\ref{section:main} we will investigate Gizatullin's problem for smooth quartic surfaces $S\subset \PP^3$ with $\rho(S) = 2$.
	We will reduce the problem to quartic surfaces  with discriminant $r \leq 57$ and $r\neq 52$ (Corollary~\ref{cor:noRealizationForr>57}). 
	The next proposition describes the automorphism group $\Aut(S)$ in these cases.

	\begin{prop}\label{prop:Autforrleq57}
		The sets
		\begin{itemize}
			\item[] $\mathcal{R}_0 = \{9, 12, 16, 24, 25, 33, 36, 44, 49, 57\}$,
			\item[] $\mathcal{R}_1 = \{17, 41\}$,
			\item[] $\mathcal{R}_2 = \{28, 56\}$, and 
			\item[] $\mathcal{R}_3 = \{20, 32, 40, 48\}$
		\end{itemize}
		give a partition of all integers $r \leq 57$, $r\neq 52$, such that $r$ is the discriminant of a smooth quartic surface $S\subset \PP^3$ with $\rho(S) = 2$.
		
		The subgroup $\Aut^\pm(S)$ of $\Aut(S)$ is described as follows:
		\[
		\Aut^\pm(S) \cong 
		\left\{
		\begin{array}{ll}
			\{id\},            & \text{ if } r \in \mathcal{R}_0;\\
			\ZZ_2,            & \text{ if } r \in \mathcal{R}_1;\\
			\ZZ_2 \ast \ZZ_2, & \text{ if } r \in \mathcal{R}_2; \\
			\ZZ,              & \text{ if } r \in \mathcal{R}_3.    
		\end{array}   
		\right.
		\]
		Moreover, in the first two cases, $\Aut(S) = \Aut^{\pm}(S)$.
	\end{prop}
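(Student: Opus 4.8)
The plan is to combine the lattice-theoretic characterization of $\Aut(S)$ from Proposition~\ref{thm:possibilitiesForAut} with an explicit analysis of the Pell-type equations governing the existence of the relevant divisor classes, as set up in Remark~\ref{rmk:discriminant&Pell}. For the first claim, I would note that $r\equiv 0,1,4\pmod 8$ is a necessary condition, and that conversely, by Theorem~\ref{thm:K3andlattices}, for any such $r$ there exists a K3 surface with Picard lattice given by a matrix of the form \eqref{matrixbilinearform} with $b^2-8c=r$; one must still check that such an $S$ can be taken to be a smooth quartic, i.e.\ that the class $H$ with $H^2=4$ is very ample, which follows from Proposition~\ref{prop:criterioveryample} once we rule out the obstructions (irreducible $E$ with $E^2=0$ and $H\cdot E\in\{1,2\}$, or $E^2=-2$ with $H\cdot E=0$, or $H\sim 2E$ with $E^2=2$); these obstructions translate into solvability of $x^2-ry^2\in\{0,4,-8\}$ with small $x$, and the finitely many small values of $r\le 57$ excluded (those not $\equiv 0,1,4\pmod 8$, together with $r=52$) are exactly those where either no valid $(b,c)$ exists or $H$ fails to be very ample. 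This bookkeeping over the finite list $\{1,\dots,57\}$ is routine but must be done carefully; I would present it as a short table.

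For the identification of $\Aut(S)$ when $S$ is Aut-general, by Proposition~\ref{thm:possibilitiesForAut} everything reduces to deciding, for each $r$ in the list, the two boolean questions: (i) does there exist $D\in\Pic(S)$ with $D^2\in\{0,-2\}$? and (ii) does there exist an \emph{ample} class $A\in\Pic(S)$ with $A^2=2$? By Remark~\ref{rmk:discriminant&Pell}, question (i) is equivalent to solvability of $x^2-ry^2=0$ (never, since $r$ is not a perfect square for $r$ in our list — indeed $9,16,25,36,49$ look like squares but one checks $r=b^2-8c$ forces... wait, $9=b^2-8c$ has solutions, so $x^2-9y^2=0$ \emph{is} solvable; here $r=9$ must land in $\mathcal R_0$, consistent with $\Aut=\{1\}$) or $x^2-ry^2=-8$. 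So the real content is: $\exists D$ with $D^2\in\{0,-2\}$ iff $r$ is a perfect square or $x^2-ry^2=-8$ has an integer solution; and $\exists$ ample $A$ with $A^2=2$ iff $x^2-ry^2=8$ has a solution $(d,m)$ with $m\neq 0$ \emph{and} the corresponding class, after adjusting sign, lies in the ample cone — which for $\rho=2$ means it has positive intersection with the two boundary rays of $\Amp(S)$, equivalently it is not orthogonal to, and lies on the correct side of, any $(-2)$-class or isotropic class. I would organize the argument as: first solve the two Pell equations $x^2-ry^2=\pm 8$ for each $r\le 57$ (a finite computation, recording the minimal solutions); then for those $r$ admitting a solution to $x^2-ry^2=8$, verify the ampleness of the resulting class by checking it pairs positively with a chosen ample generator and is not killed by any short vector — again finitely many checks per $r$.

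The main obstacle I anticipate is the ampleness verification in question (ii): having a class $A$ with $A^2=2$ is easy to detect via Pell, but $A$ need only be ample \emph{up to the action of $O(\Pic(S))$ and sign}, and for a fixed embedding $S\subset\PP^3$ the hyperplane class $H$ is pinned down, so one genuinely has to locate the ample cone inside $\Pic(S)\otimes\RR$ and test membership. Concretely, one computes the walls of the ample cone — these are spanned by $(-2)$-classes $E$ with $H\cdot E>0$ small, or by isotropic classes — using the structure of the $\rho=2$ hyperbolic lattice, and checks whether the $A$ produced by Pell, or one of its finitely many translates with $A^2=2$ and $A\cdot H$ small, lands strictly between the walls. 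For the cases $r\in\mathcal R_2=\{28,56\}$ one must additionally confirm the \emph{absence} of any $(0)$- or $(-2)$-class, i.e.\ that $x^2-ry^2\in\{0,-8\}$ has no solution, to land in the $\ZZ_2\ast\ZZ_2$ case rather than $\ZZ_2$. Once these finite checks are tabulated, the four cases fall out immediately from Proposition~\ref{thm:possibilitiesForAut}, and the partition statement is just the observation that the union of the $\mathcal R_i$ exhausts, and the $\mathcal R_i$ are pairwise disjoint by construction. I would also invoke \cite[Corollary 1]{GLP} and \cite[Proposition 12]{paivaquedo} directly (as packaged in Proposition~\ref{thm:possibilitiesForAut}) rather than re-deriving the group structure, so that the only new work is the arithmetic over the finite list of discriminants.
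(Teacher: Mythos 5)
Your overall strategy is the same as the paper's: reduce the second claim to the two lattice-theoretic questions of Proposition~\ref{thm:possibilitiesForAut}, translate them into the generalized Pell equations $x^2-ry^2\in\{0,-8,8\}$ via Remark~\ref{rmk:discriminant&Pell}, settle each discriminant by finite arithmetic, and get the partition statement from the congruence $r\equiv 0,1,4\pmod 8$ plus realizability through Theorem~\ref{thm:K3andlattices} and the very-ampleness criterion. But there is one step in your plan that would genuinely fail: your treatment of $r=52$. The value $52$ \emph{is} the discriminant of a smooth quartic surface with Picard rank $2$ (take $b=2$, $c=-6$; this lattice contains no $(-2)$-classes and no isotropic classes, so $H$ is very ample, and indeed the paper's realizability argument covers \emph{every} $r\equiv 0,1,4\pmod 8$ with $8<r\leq 57$, including $52$). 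It is excluded from the proposition purely by hypothesis, because the earlier Corollary~\ref{cor:noRealizationForr>57} discards it for an unrelated arithmetic reason ($52$ divides none of the sublattice discriminants $r'=d^2-8(p_a-1)$). So your proposed bookkeeping, which claims that $52$ is among the values where ``no valid $(b,c)$ exists or $H$ fails to be very ample,'' would not close: the only discriminants killed on those geometric grounds are $r\in\{1,4,8\}$ (together with $r\not\equiv 0,1,4\pmod 8$).

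A second, lesser issue is that the real content of the proof is exactly the arithmetic you defer. Proving that $x^2-ry^2=8$ (resp.\ $-8$) has \emph{no} integer solutions is not a matter of ``recording minimal solutions''; the paper supplies explicit obstructions: congruences mod $3$ or $11$ for $r\in\{9,12,24,33,36,44,57\}$, a direct size bound for the squares $16,25,49$, mod $7$ for $r\in\{28,56\}$, mod $5$ for $r\in\{20,40\}$, and a parity argument for $r\in\{32,48\}$. On the positive-existence side you also need the integrality step (choosing the sign of $y$ so that $(x-by)/4\in\ZZ$, giving an honest class in $\Pic(S)$), and your wall-crossing check of ampleness can be replaced by the paper's shorter arguments: for $r\in\{17,41\}$ a $(-2)$-curve orthogonal to the nef class $A$ with $A^2=2$ would produce an isotropic class, impossible since $r$ is not a square; for $r\in\mathcal{R}_2\cup\mathcal{R}_3$ there are no $(-2)$-classes at all, so positivity already gives ampleness. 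With the $r=52$ misreading corrected and these finite verifications actually carried out, your outline becomes the paper's proof.
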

	
	\begin{proof}
		As before, we denote by $H$ be the class of a hyperplane section of $S$ and let $\{H,W\}$ be a basis of $\Pic(S)$. 
		With respect to this basis, the intersection product in $\Pic(S)$ is given by the matrix \eqref{matrixbilinearform}, 
		and $r  \equiv 0,1,4 \ (\modd{8})$. 
		If $r\in\{1,4,8\}$, then we can find an irreducible curve $E$ such that 
		$(E^2, H\cdot E)\in \{(-2,0),(0,1),(0,2)\}$, which contradicts the fact that $H$ is very ample (Proposition \ref{prop:criterioveryample}). 
		Conversely, by Theorem \ref{thm:K3andlattices}, Proposition \ref{prop:criterioveryample} and \cite[Theorem 6.1]{SD74}, every even lattice with bilinear form given by \eqref{matrixbilinearform}, signature $(1,1)$ and discriminant $8<r\leq 57$ can be realized as the Picard lattice of a smooth quartic surface. 
		This proves the first assertion. 
		
		By Proposition \ref{prop:possibilitiesForAut}, the subgroup $\Aut^\pm(S)$ of $\Aut(S)$ is completely determined by the existence of a divisor $D$ with $D^2\in\{0,-2\}$ and an ample divisor $A$ with $A^2=2$. 
		By Remark~\ref{rmk:discriminant&Pell}, the existence of a divisor $\Delta$ on $S$ with $\Delta^2=k$, is equivalent to the existence of an integer solution of the generalized Pell equation $x^2-ry^2=4k$. 
		The second assertion then follows from checking the existence of integer solutions of the corresponding generalized Pell equations for each value of $r\in \mathcal{R}_i$,  $i\in\{0,1,2,3\}$.

		For each $r\in\mathcal{R}_0\cup \mathcal{R}_1$, either the equation $x^2-ry^2=0$ or the equation $x^2-ry^2=-8$ has an integer solution, as illustrated in the following table. This implies that $\Aut^\pm(S)= \{id\}$ or $\Aut^\pm(S)\cong \ZZ_2$.
		\[
		\arraycolsep=5pt \def\arraystretch{1.2}
		\begin{array}{|c|c|c|c|c|c|c|c|c|c|c|c|c|}
			\rowcolor{gray!25}
			\hline
			r      & 9      & 12    & 16  & 17   & 24    & 25      & 33    & 36  & 41  & 44 & 49 &57    \\
			\hline
			(x,y)  & (1,1) & (2,1) & (4,1) & (3,1) & (4,1) & (5,1) & (5,1) & (6,1) & (19,3) & (6,1) & (7,1) & (7,1) \\[1pt]
			\hline  \rowcolor{gray!7}
			x^2-ry^2& -8 & -8 & 0 & -8 & -8 & 0 & -8 & 0 & -8 & -8 & 0 & -8\\
			\hline
		\end{array}
		\]
		
		Suppose that $r\in\mathcal{R}_0 = \{9, 12, 16, 24, 25, 33, 36, 44, 49, 57\}$.
		In order to show that $\Aut^\pm(S)= \{id\}$, we will show that there are no divisors with square $2$, or equivalently that $x^2-ry^2=8$ does not have integer solutions.
		If $r\in\{9,12,24,33,36,44,57\}$, then either $r\equiv 0\ (\modd{3})$ or $r\equiv 0\ (\modd{11})$. So the equation $x^2-ry^2=8$ reduces to  either $x^2\equiv 2\ (\modd{3})$ or $x^2\equiv 8\ (\modd{11})$, and one checks easily that these have no integer solution. 
		If $r\in\{16,25,49\}$, then $r=t^2$ for an appropriate integer $t>1$.
		We set $z=ty$ and rewrite the equation $x^2-ry^2=8$ as $x^2-z^2=8$. An integer solution $(x,z)$ must satisfy $x^2>z^2>1$.
		Then, from $$8=x^2-z^2=|x|^2-|z|^2\geq |x|^2-(|x|-1)^2=2|x|-1,$$
		we conclude that $2\leq |z|<|x|\leq4$, and one checks easily that there are no integer solutions.

		Suppose that $r\in\mathcal{R}_1 = \{17, 41\}$. If $r=b^2 - 8c=17$, then $(x,y)=(5,1)$ and $(5,-1)$ are solutions of $x^2-17y^2=8$ and one of them satisfies that $z\defeq\frac{x-yb}{4}\in\ZZ$. 
		For such pair $(x,y)$,  $A=zH+yW\in \Pic(S)$ is the corresponding divisor on $S$ with $A^2=2$. 
		Note that $A\cdot H=5$, $\Pic(S)=\langle H,A\rangle$, and $A$ is effective. By Corollary \ref{cor:linearSystem}, $A$ is nef (and big).
		To show that $A$ is ample, it is enough to check that there is no rational curve $\Gamma$ such that $A\cdot \Gamma=0$. 
		Indeed, if there is such a curve $\Gamma$, then $E=A+\Gamma\in \Pic(S)$ satisfies $E^2=0$, and so $0=4E^2=(H\cdot E)^2-17m^2$, where $m\in\ZZ$ is such that $E=nH+mW$ in $\Pic(S)$.  This is not possible since $r=17$ is not a square number. 
		When $r=41$, we argue in the same way, with $(x,y)=(7,1),(7,-1)$ being solutions of  $x^2-41y^2=8$. 
		
		If $r\in\mathcal{R}_2= \{28, 56\}$, then $r\equiv0\ (\modd{7})$. So the equation $x^2-ry^2=-8$ reduces to $x^2\equiv 6\ (\modd{7})$, which does not have solutions. Together with the fact that $r$ is not a square number, this implies the non-existence of divisors on $S$ with self-intersection $0$ or $-2$. 
		In order to show that $\Aut^\pm(S)\cong \ZZ_2 \ast \ZZ_2$, we must verify the existence of $A\in \Pic(S)$ ample with $A^2=2$. 
		Indeed, the pair $(x,y)=(6,1)$ (respectively $(x,y)=(8,1)$) is a solution of the equation $x^2-ry^2=8$ for $r=28$ (respectively $r=56$), 
		and the corresponding divisor $A$ is automatically ample since $S$ has no rational curves.

		Finally, suppose that  $r\in\mathcal{R}_3 = \{20, 32, 40, 48\}$. Since $r$ is not a square number, $x^2-ry^2=0$ does not have integer solutions. If $r\in\{20,40\}$, then $r\equiv 0\ (\modd{5})$. So the equations $x^2-ry^2=-8$ and $x^2-ry^2=8$ reduce to $x^2\equiv3$ and $x^2\equiv2 \ (\modd{5})$, none of which has solutions. 
		If $r\in\{32,48\}$, then write $r=16s$ for the appropriate integer $s\in\{2,3\}$. If $(x,y)$ is a solution of $x^2-16sy^2=-8$ or $x^2-16sy^2=8$, then $x=2z$ is an even integer. So these equations can be simplified to $z^2-4sy^2=-2$ and $z^2-4sy^2=2$, and then reduced to $z^2\equiv 2\ (\modd{4})$, which does not have solutions.  
		
		The last claim follows from Corollary \ref{cor:actionDetermByPic}\eqref{it:actionDetermByPic1}.
	\end{proof}

	In Section~\ref{section:main}, in order to realize elements of $\Aut^{\pm}(S)$
	for a quartic surface $S\subset \PP^3$ with $\rho(S) = 2$ and discriminant $r\in \mathcal{R}_1 \cup \mathcal{R}_2\cup \mathcal{R}_3$ as restrictions of Cremona transformations, we will need to write down explicit generators for $\Aut^{\pm}(S)$. 
	To do so, we will use the following results from \cite{Lee}. 
	In what follows, we denote by $H$ the class of a hyperplane section of $S$, extend it to a basis $\{H,W\}$ of $\Pic(S)$, and write the intersection matrix $Q$ in this basis as in \eqref{matrixbilinearform}.

	\begin{prop}[{\cite[Theorem 1.1 and Lemma 2.6]{Lee}, \cite[Lemma 2.1.19]{Paiva}}] \label{prop:Leeinfiniteordergen} \label{prop:Leeinvolutionsgen}
		Let $S\subset \PP^3$ be a smooth quartic surface with $\rho(S)=2$, and let $H$ and $Q$ be as above. 
		
		An isometry $\phi\in O(\Pic(S))$ is induced by an automorphism $g\in \Aut^{\pm}(S)$ if and only if 
		\[
		(\phi+ Id)*Q^{-1}\in M_{2\times 2}(\ZZ) \text{ or }  (\phi- Id)*Q^{-1}\in M_{2\times 2}(\ZZ),  \text{ and } \phi(H) \text{ is ample. }
		\]
		
		Furthermore, we have the following characterizations of involutions and automorphisms of infinite order.
		\begin{enumerate}
			\item \label{Leeinvolution} 
			The automorphism $g$ is an involution if and only if the corresponding isometry $\phi=g^*$ is of the form 
			\begin{equation*}
				\phi=\begin{pmatrix}
					\alpha & \beta \\
					-\frac{b}{c}\alpha+\frac{2}{c}\beta & -\alpha
				\end{pmatrix},
			\end{equation*}
			where $(\alpha,\beta)$ is an integer solution of the equation:
			\begin{equation}\tag{$\ast$}\label{quadeq}
				\alpha^2-\frac{b}{c}\alpha\beta+\frac{2}{c}\beta^2=1.
			\end{equation}
			
			\item \label{Leeinfinite} The automorphism $g$ has infinite order if and only if the corresponding isometry $\phi=g^*$ is of the form 
			\begin{equation*}    
				\phi=\begin{pmatrix}
					\alpha & \beta \\
					-\frac{2}{c}\beta & \alpha-\frac{b}{c}\beta 
				\end{pmatrix},
			\end{equation*}
			where $(\alpha,\beta)$ is an integer solution of the equation \emph{(\ref{quadeq})}.
			In this case, $\phi$ is a power of
			\begin{equation*}    
				h= \begin{pmatrix}
					\alpha_1 & \beta_1 \\
					-\frac{2}{c}\beta_1 & \alpha_1-\frac{b}{c}\beta_1 
				\end{pmatrix},
			\end{equation*}
			where $(\alpha_1,\beta_1)$ is a minimal positive integer solution of \emph{(\ref{quadeq})}.  
		\end{enumerate}
	\end{prop}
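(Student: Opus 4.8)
The statement to prove is Proposition~\ref{prop:Leeinfiniteordergen}/\ref{prop:Leeinvolutionsgen}, and the plan is to deduce everything from the lattice-theoretic machinery already assembled: the Gluing Isometries Theorem~\ref{thm:gluing}, the Global Torelli Theorem~\ref{thm:torelli}, Proposition~\ref{thm:possibilitiesForAut}, and Proposition~\ref{prop:chracterization_of_involutions}. Since $S$ is $\Aut$-general, for every $g\in \Aut(S)$ one has $g^*\omega_S = \pm\omega_S$; hence the induced isometry $\varphi_2$ on $T(S)$ is $\pm \mathrm{id}$, and the Hodge condition in Torelli's theorem becomes automatic. So a Hodge isometry of $H^2(S,\ZZ)$ restricting to a given $\phi \in O(\Pic(S))$ exists precisely when $\phi$ can be glued to $\pm\mathrm{id}_{T(S)}$, i.e.\ (Theorem~\ref{thm:gluing}) when $\overline{\phi} = \overline{\pm\mathrm{id}}$ on the discriminant group $A(\Pic(S)) \cong A(T(S))$. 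Writing $Q$ for the intersection matrix in the basis $\{H,W\}$, the dual lattice is $\Pic(S)^* = Q^{-1}\ZZ^2$, and the condition $\overline{\phi} = \overline{\mathrm{id}}$ (resp.\ $\overline{\phi} = \overline{-\mathrm{id}}$) on $A(\Pic(S)) = Q^{-1}\ZZ^2 / \ZZ^2$ translates into $(\phi - \mathrm{id})Q^{-1} \in M_{2\times2}(\ZZ)$ (resp.\ $(\phi + \mathrm{id})Q^{-1}\in M_{2\times2}(\ZZ)$). Combined with the requirement that $\phi(H)$ be ample (so that $\phi$ sends an ample class to an ample class, and Torelli applies), this gives the first displayed criterion.

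Next I would pin down which isometries $\phi\in O(\Pic(S))$ actually occur. An isometry of $\Pic(S)=\ZZ H\oplus\ZZ W$ preserves $Q$; writing $\phi$ as an integer matrix and imposing $\phi^{\mathsf T} Q \phi = Q$ gives a one-parameter family of possibilities governed by a Pell-type equation. The two families in the statement correspond to $\det\phi = \pm 1$. For an involution, $\phi^2 = \mathrm{id}$ forces $\operatorname{tr}\phi = 0$ (the non-scalar case; $\phi=\pm\mathrm{id}$ being excluded because $\phi = \mathrm{id}$ is trivial and $\phi=-\mathrm{id}$ does not fix an ample class), so $\phi = \begin{pmatrix}\alpha & \beta\\ \gamma & -\alpha\end{pmatrix}$; the relation $\phi^{\mathsf T}Q\phi = Q$ then determines $\gamma = -\frac{b}{c}\alpha + \frac{2}{c}\beta$ and yields exactly equation~(\ref{quadeq}), namely $\alpha^2 - \frac{b}{c}\alpha\beta + \frac{2}{c}\beta^2 = 1$ (equivalently $\det\phi = -1$). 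For an automorphism of infinite order one instead has $\det\phi = 1$ and $\phi$ is not a root of unity in $\mathrm{GL}_2(\ZZ)$; solving $\phi^{\mathsf T}Q\phi = Q$ with $\det\phi = 1$ gives $\phi = \begin{pmatrix}\alpha & \beta\\ -\frac{2}{c}\beta & \alpha - \frac{b}{c}\beta\end{pmatrix}$, and the same equation~(\ref{quadeq}) reappears as the condition $\det\phi = 1$ together with compatibility of the trace. That the infinite-order isometries form a cyclic group generated by the matrix $h$ attached to a minimal positive solution $(\alpha_1,\beta_1)$ follows from the standard structure of the solution set of a Pell equation: the solutions form a rank-one group under the multiplication induced by composition of the corresponding matrices, so every solution is (up to sign and inverse) a power of the fundamental one; one then checks that passing to a power of $h$ corresponds to passing to a power of $\phi$.

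Finally I would verify that the Gluing/Torelli conditions are consistent with these explicit forms — i.e.\ that an involution-type or infinite-order-type matrix $\phi$ satisfying $(\phi\mp\mathrm{id})Q^{-1}\in M_{2\times2}(\ZZ)$ and $\phi(H)$ ample does come from a genuine automorphism, via Theorems~\ref{thm:gluing} and~\ref{thm:torelli}, and conversely that every $g\in\Aut(S)$ produces such a $\phi$ (using $\Aut$-generality to control the transcendental part, and Proposition~\ref{thm:possibilitiesForAut} to know that $\Aut(S)$ is one of $\{1\}, \ZZ_2, \ZZ, \ZZ_2\ast\ZZ_2$, so that each nontrivial element is either an involution or of infinite order). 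The main obstacle, I expect, is the bookkeeping in the gluing step: identifying $A(\Pic(S))$ with $A(T(S))$ correctly and checking that $\pm\mathrm{id}$ on $T(S)$ is the \emph{only} relevant transcendental isometry (this is exactly where $\Aut$-generality is used) — and then being careful that the matrix identity $(\phi\pm\mathrm{id})Q^{-1}\in M_{2\times2}(\ZZ)$ genuinely encodes $\overline\phi = \overline{\mp\mathrm{id}}$ rather than a weaker or stronger congruence. Everything else is a finite Pell-equation computation of the kind already carried out in the proof of Proposition~\ref{prop:Autforrleq57}. Since this proposition is quoted verbatim from \cite{Lee}, in the paper it would suffice to cite that reference, but the sketch above indicates how one reconstructs it from the tools in Section~\ref{section:K3s}.
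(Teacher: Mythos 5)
The paper offers no proof of this proposition to compare against: it is quoted verbatim from \cite{Lee} (Theorems 1.1, 1.2 and Lemma 2.7), and the citation is the paper's entire justification. Your sketch reconstructs the expected (and essentially Lee's) argument correctly: Aut-generality reduces the transcendental action to $\pm\mathrm{id}$, the matrix conditions $(\phi\mp \mathrm{Id})Q^{-1}\in M_{2\times 2}(\ZZ)$ are exactly $\overline{\phi}=\overline{\pm\mathrm{id}}$ on $A(\Pic(S))\cong A(T(S))$, Theorem~\ref{thm:gluing} plus Theorem~\ref{thm:torelli} (with $\phi(H)$ ample) produce the automorphism, and the explicit shapes in (1)--(2) come from the classical theory of automorphs of an indefinite binary form, i.e.\ a Pell-type equation.

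Two steps you assert would need a line of justification to be complete. First, ``$g^*\omega_S=\pm\omega_S$ implies $g^*=\pm\mathrm{id}$ on $T(S)$'' is true but not immediate: the rational $(\pm1)$-eigenlattice of $g^*$ in $T(S)$ contains $\omega_S$ after complexification, so its orthogonal complement inside $T(S)$ consists of rational $(1,1)$-classes and hence vanishes, forcing the eigenlattice to have full rank and $g^*\mp\mathrm{id}$ to vanish on $T(S)$. Second, the dichotomy underlying (1) and (2) should be stated: for an isometry of an indefinite rank-two lattice, $\det\phi=-1$ forces $\phi^2=\mathrm{Id}$ (so non-central involutions are exactly the determinant $-1$ isometries, of trace $0$), while $\det\phi=1$, $\phi\neq\pm\mathrm{Id}$ forces infinite order; this is what legitimizes solving $\phi^{\mathsf T}Q\phi=Q$ separately in the two shapes displayed. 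Finally, in (2) note that $h$ itself need not satisfy the Gluing/Torelli conditions (this is exactly how the paper later uses the proposition, taking $\phi=h^k$ with $k>1$); the correct assertion, which your Pell-equation remark does give, is that the proper automorph group modulo $\pm\mathrm{Id}$ is infinite cyclic generated by $h$, so every infinite-order $g^*$ is a power of $h$. With these points made explicit your reconstruction is sound; for the paper itself, citing \cite{Lee} is all that is done.
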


	\begin{rem}
		The necessary and sufficient conditions for an isometry $\phi\in O(\Pic(S))$ to be induced by an automorphism of $S$ in Proposition~\ref{prop:Leeinvolutionsgen} are exactly the Gluing and Torelli conditions from Theorems~\ref{thm:gluing} and \ref{thm:torelli}.
		More precisely $(\phi\pm Id)*Q^{-1}\in M_{2\times 2}(\ZZ)$ if and only if $\overline{\phi}=\mp Id$ on $A(\Pic(S))$. By Theorem~\ref{thm:gluing}, this is equivalent to the existence of an isometry $\Phi$ on $H^2(S,\ZZ)$ whose restrictions to $\Pic(S)$ and $T(S)$ are $\phi$ and $\mp Id$, respectively. This isometry $\Phi$ is automatically a Hodge isometry, so, by Theorem~\ref{thm:torelli}, it is induced by an automorphism of $S$ if and only if  $\phi(H)$ is ample.         
		The assumption that $g\in \Aut^{\pm}(S)$ ensures that the corresponding isometry acts as $\pm Id$ on $T(S)$. 
		
	\end{rem}

	\section{Ingredients from Birational Geometry}\label{section:birational}

	\subsection{The Sarkisov Program}
	
	Given a uniruled variety, the MMP produces a \emph{Mori fiber space} that is birationally equivalent to it. 
	In general, there are several different Mori fiber spaces in the same birational equivalence class. The  \emph{Sarkisov program} provides a factorization theorem for  birational maps between Mori fiber spaces in terms of simpler birational maps, called  \emph{Sarkisov links}. It was established in dimension~3 in~\cite{Corti}, and in higher dimensions in~\cite{HM13}.
	We start this section by fixing notation and recalling some basic notions of the MMP and the Sarkisov Program. 
	
	\begin{defnot} \leavevmode \label{def:contr}
		\begin{enumerate} 
			\item  A \emph{divisorial (extremal) contraction} is a birational contraction $f\colon Z \to X$ of relative Picard rank $1$ between $\QQ$-factorial terminal varieties associated to an extremal ray $R\subset \NE(Z)$ such that $K_Z\cdot R<0$. It contracts a divisor of $Z$ onto a subvariety of codimension $\geq 2$ in $X$, which is called the \emph{center} of the divisorial contraction. 
			
			\item\label{item:flip} Let $\varphi\colon Z \psmap Z^\prime$ be a small birational map between $\QQ$-factorial terminal varieties fitting into a commutative diagram
			\[
			\xymatrix@R=.5cm@C=.5cm{
				Z \ar@{..>}[rr]^{\varphi} \ar[rd]_{s} && Z^\prime \ar[ld]^(.43){s^\prime},\\
				& W
			}
			\]
			where $s\colon Z \to W$ and $s^\prime\colon Z^\prime \to W$ are small contractions of relative Picard rank $1$ associated to extremal rays  $R\subset \NE(Z)$ and $R^\prime\subset \NE(Z^\prime)$. 
			We say that $\varphi$ is a \emph{flip} if $K_Z\cdot R<0$ and $K_{Z^\prime}\cdot R^\prime>0$. 
			We say that $\varphi$ is an \emph{antiflip} if $K_Z\cdot R>0$ and $K_{Z^\prime}\cdot R^\prime<0$. 
			We say that $\varphi$ is a \emph{flop} if $K_Z\cdot R=0=K_{Z^\prime}\cdot R^\prime$.
			\item  A \emph{Mori fiber space} is a morphism $X \to B$ with connected fibers and relative Picard rank $1$  from a $\QQ$-factorial terminal variety $X$ onto a lower dimensional normal variety $B$ associated to an extremal ray $R\subset \NE(X)$ such that $K_X\cdot R<0$.
		\end{enumerate}
	\end{defnot}
	
	The following result is probably well known to experts. We include a proof due to the lack of a reference. 	
	
	\begin{lem}
		\label{lem:antiflipsRationalCurves}
		Let $\varphi\colon Z \psmap Z^\prime$ be a small birational map as in Definition \ref{def:contr}\eqref{item:flip}.
		\begin{enumerate}
			\item\label{item:everythingIsAFlop} There exist divisors $\Delta$ and $\Delta'$ in $\Pic(Z)_{\QQ}$ and $\Pic(Z')_{\QQ}$ respectively, such that
			\[
			(K_Z + \Delta)\cdot R = (K_{Z'} + \Delta')\cdot R' = 0,
			\]
			and both $(Z,\Delta)$ and $(Z',\Delta')$ are klt.
			\item\label{item:generationByRationalCurves} The rays $R$ and $R'$ are both generated by rational curves.
			\item\label{item:generationBySmoothRationalCurves} If moreover $\dim(Z) = \dim(Z') = 3$, then every irreducible component of $\Exc(s)$ and $\Exc(s')$ is isomorphic to $\PP^1$.
		\end{enumerate}
	\end{lem}

	\begin{proof}
		Let $Z, Z',\varphi, s$ and $s'$ be as in Definition \ref{def:contr}\eqref{item:flip}.
		If $\varphi$ is a flop, then we simply choose $\Delta = \Delta' = 0$.
		Without loss of generality, assume that $\varphi$ is a flip, i.e.,  $K_Z\cdot R<0$.
		We take $A$ a general and sufficiently (very) ample divisor on $Z$ and $\Delta = t_AA$, $0  <t_A <1$, so that the pair $(Z,\Delta)$ is klt and $(K_Z+\Delta)\cdot R=0$. Therefore, $K_Z + \Delta$ is $\QQ$-linearly equivalent to the pullback of a $\QQ$-Cartier $\QQ$-divisor on $W$, namely $K_Z + \Delta = s^*(K_W + \Delta_W)$, where $\Delta_W = s_* \Delta$. Since $(Z,\Delta)$ is klt, so are $(W,\Delta_W)$ and $(Z',\Delta')$, where $\Delta'=\varphi_*\Delta$ and $K_{Z'} + \Delta' = {s'}^*(K_W + \Delta_W)$. This proves \eqref{item:everythingIsAFlop}.

		To prove \eqref{item:generationByRationalCurves}, let $\Delta$ be as in \eqref{item:everythingIsAFlop}, $A'$ an effective ample divisor on $Z'$, and $A$ its strict transform on $Z$.
		Then $A$ is $s$-antiample.
		In particular, for any $t > 0$ sufficiently small, $(Z,\Delta + tA)$ is klt and $(K_Z + \Delta + tA)\cdot R < 0$.
		We conclude that $R$ is generated by a rational curves by \cite[Theorem 3.7(1)]{KM98}, and similarly for $R'$.

		By \eqref{item:everythingIsAFlop} $(W,\Delta_W)$ is klt, and so, by \cite[Theorem 5.22]{KM98}, $W$ has rational singularities.
		By taking a resolution of singularities $f:X \to Z$ and considering the Grothendieck spectral sequence for $f_*$ and $s_*$, one checks that $R^is_*(\OO_Z) =  0$  $\forall i>0$, and then \eqref{item:generationBySmoothRationalCurves} follows from \cite[Lemma 3.4]{Kaw88}.
	\end{proof}

	Next we recall the definition of the four types of Sarkisov links between Mori fiber spaces.

	\begin{defi}[Sarkisov links]\label{def:links}
		In the following diagrams, $X\to B$ and  $X^\prime\to B^\prime$ denote Mori fiber spaces. 
		\begin{enumerate}[(I)]
			\item A \emph{Sarkisov diagram of type~(I)} is a commutative diagram
			\[
			\begin{tikzpicture}[xscale=1.5,yscale=-1.2]
				\node (A0_0) at (0, 0) {};
				\node (A0_1) at (1, 0) {$Z$};
				\node (A0_2) at (2, 0) {$X^\prime$};
				\node (A1_0) at (0, .8) {$X$};
				\node (A1_2) at (2, .8) {$B^\prime$};
				\node (A2_0) at (0, 2.3) {$B$};
				\path (A1_2) edge [->]node [auto] {$\scriptstyle{}$} (A2_0);
				\path (A0_1) edge [->]node [auto] {$\scriptstyle{}$} (A1_0);
				\path (A0_1) edge [->,dotted]node [auto] {$\scriptstyle{}$} (A0_2);
				\path (A0_2) edge [->]node [auto] {$\scriptstyle{}$} (A1_2);
				\path (A1_0) edge [->]node [auto] {$\scriptstyle{}$} (A2_0);
			\end{tikzpicture}
			\]
			where $Z\to X$ is a divisorial contraction and
			$Z\psmap X^\prime$ is a sequence of flips, flops and antiflips.
			The map $X\dasharrow X^\prime$ is called a \emph{Sarkisov link of type~(I)}.

			\item A \emph{Sarkisov diagram of type~(II)} is a commutative diagram
			\[
			\begin{tikzpicture}[xscale=1.5,yscale=-1.2]
				\node (A0_1) at (1, 0) {$Z$};
				\node (A0_2) at (2, 0) {$Z^\prime$};
				\node (A1_0) at (0, 1) {$X$};
				\node (A1_3) at (3, 1) {$X^\prime$};
				\node (A2_0) at (0, 2) {$B$};
				\node (A2_3) at (3, 2) {$B$};
				\path (A0_1) edge [->]node [auto] {$\scriptstyle{}$} (A1_0);
				\path (A1_3) edge [->]node [auto] {$\scriptstyle{}$} (A2_3);
				\path (A2_0) edge [-,double distance=1.5pt]node [auto] {$\scriptstyle{}$} (A2_3);
				\path (A1_0) edge [->]node [auto] {$\scriptstyle{}$} (A2_0);
				\path (A0_2) edge [->]node [auto] {$\scriptstyle{}$} (A1_3);
				\path (A0_1) edge [->,dotted]node [auto] {$\scriptstyle{}$} (A0_2);
			\end{tikzpicture}
			\]
			where $Z\to X$ and $Z^\prime \to X^\prime$ are divisorial
			contractions and $Z\psmap Z^\prime$ is a sequence of flips, flops and antiflips.
			In order to avoid trivial diagrams, we also require that the common relative effective cone of $Z$ and $Z^\prime$ over $B$ be generated by the exceptional divisors of $Z\to X$ and $Z^\prime \to X^\prime$.
			The map $X\dasharrow X^\prime$ is called a \emph{Sarkisov link of type~(II)}.

			\item A \emph{Sarkisov link of type~(III)} is the inverse of a Sarkisov link of type~(I).
			\item A \emph{Sarkisov diagram of type~(IV)} is a commutative diagram
			\[
			\begin{tikzpicture}[xscale=1.5,yscale=-1.2]
				\node (A0_0) at (0, 0) {$X$};
				\node (A0_2) at (2, 0) {$X^\prime$};
				\node (A1_0) at (0, 1) {$B$};
				\node (A1_2) at (2, 1) {$B^\prime$};
				\node (A2_1) at (1, 2) {$T$};
				\path (A1_2) edge [->]node [auto] {$\scriptstyle{}$} (A2_1);
				\path (A0_0) edge [->]node [auto] {$\scriptstyle{}$} (A1_0);
				\path (A1_0) edge [->]node [auto] {$\scriptstyle{}$} (A2_1);
				\path (A0_2) edge [->]node [auto] {$\scriptstyle{}$} (A1_2);
				\path (A0_0) edge [->,dotted]node [auto] {$\scriptstyle{}$} (A0_2);
			\end{tikzpicture}
			\]
			where $X\psmap X^\prime$ is a sequence of flips, flops and
			antiflips, and $B\rightarrow T$ and $B^\prime\rightarrow T$ are Mori contractions.
			In order to avoid trivial diagrams, we also require that the common relative effective cone of $X$ and $X^\prime$ over $T$ be generated by the pullbacks to $X$ and $X^\prime$ of ample divisors on $B$ and $B^\prime$, respectively.
			The map $X\psmap X^\prime$ is called a \emph{Sarkisov link of type~(IV)}.
		\end{enumerate}
		
		In the context of a Sarkisov diagram of type~(I) or (II) above, we say that the divisorial contraction $Z\to X$ \emph{initiates the Sarkisov link}.
	\end{defi}

	\begin{thm}[The Sarkisov Program - \cite{Corti}, \cite{HM13}]\label{thm:SarkisovProgram}
		Every birational map between Mori fiber spaces can be factorized as a composition of Sarkisov links.
	\end{thm}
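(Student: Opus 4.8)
We sketch the strategy, following Corti \cite{Corti} in dimension $3$ and Hacon--McKernan \cite{HM13} in arbitrary dimension. The plan is to attach to every birational map $\phi\colon X/B\dasharrow X'/B'$ between Mori fiber spaces a numerical invariant, its \emph{Sarkisov degree}, with values in a well-ordered set, and to show that, unless $\phi$ is an isomorphism of Mori fiber spaces or a composition of links of type~(IV), one can peel off a single Sarkisov link $\psi\colon X/B\dasharrow X_1/B_1$ so that $\phi\circ\psi^{-1}$ has strictly smaller degree. The factorization then follows by induction on the Sarkisov degree.

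To define the invariant, fix a very ample linear system $\mathcal{H}'$ on $X'$, let $\mathcal{H}$ be its strict transform on $X$, and let $\mu'>0$ be the rational number with $\mathcal{H}'\sim_{\QQ,B'}-\mu'K_{X'}$. On the $X$ side one records a triple $(\mu,\,c,\,e)$, where $\mu$ is the value for which $K_X+\tfrac1\mu\mathcal{H}$ is nef but not ample over $B$, $c=c(X,\mathcal{H})$ is the canonical threshold of the pair $(X,\mathcal{H})$, and $e$ is the number of valuations computing $c$. Ordering such triples lexicographically produces a set satisfying the descending chain condition; establishing this is the main finiteness input, and relies on results of ACC type (ACC for thresholds, boundedness of the exceptional valuations involved).

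The inductive step is a case analysis on the pair $(X,\tfrac1\mu\mathcal{H})$. If it has canonical singularities and $K_X+\tfrac1\mu\mathcal{H}$ is nef over $B$, then applying the Noether--Fano--Iskovskikh inequality to both $\phi$ and $\phi^{-1}$ forces the two sides to share the same $\mu$ and forces $\phi$ to be an isomorphism of Mori fiber spaces, or a composition of links of type~(IV); then we are done. Otherwise the pair is not canonical, or $K_X+\tfrac1\mu\mathcal{H}$ is not nef over $B$. In the first case one extracts a discrepancy-computing valuation $E$ by a divisorial contraction $Z\to X$ (the existence of such extractions being part of the MMP machinery), so that $\rho(Z/B)=2$, and then plays the $2$-ray game over $B$: one contracts, flips, flops or antiflips the second extremal ray of $\NE(Z/B)$ and runs the $\bigl(K_Z+\tfrac1\mu\mathcal{H}_Z\bigr)$-MMP over the relevant base, which terminates at a new Mori fiber space $X_1/B_1$. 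One then checks that the composite $X\dasharrow X_1$ is a Sarkisov link of type~(I) or~(II) as in Definition~\ref{def:links}, and that the degree of $\phi\circ\psi^{-1}$, computed on $X_1$ with the transform of $\mathcal{H}'$, has strictly dropped. In the second case one runs a $2$-ray game starting instead from a common resolution, obtaining symmetrically a link of type~(III) or~(IV) that again lowers the degree. Since the degree strictly decreases along a chain in a well-ordered set, the process stops after finitely many links, yielding the claimed factorization.

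The essential difficulty is concentrated in the Minimal Model Program inputs rather than in the bookkeeping: the divisorial extraction of discrepancy-computing valuations, the existence and --- most importantly --- the termination of the relative MMP with scaling needed to complete each $2$-ray game, and the descending chain condition for the Sarkisov degree. In dimension $3$ these are available from the explicit classification of threefold extremal contractions together with the termination results known in that dimension, which is the setting of \cite{Corti}; in arbitrary dimension they follow from the Minimal Model Program with scaling, and removing the dimensional restriction is precisely the contribution of \cite{HM13}. Once these are granted, checking that each untwisting step realizes one of the four link types of Definition~\ref{def:links} and that the degree strictly decreases is routine.
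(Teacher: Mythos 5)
This theorem is not proved in the paper at all: it is quoted as a black box from Corti \cite{Corti} and Hacon--McKernan \cite{HM13}, so there is no internal proof to compare against, and a sketch deferring the hard work to those sources is the most one can reasonably ask for. Your outline is a faithful summary of Corti's three-dimensional argument: the Sarkisov degree $(\mu,c,e)$ ordered lexicographically, the Noether--Fano--Iskovskikh inequality to detect when the map is already an isomorphism (or square equivalence), the extraction of a maximal singularity followed by a $2$-ray game to untwist by a link of type (I)--(IV), and termination via a descending-chain argument. The one substantive inaccuracy is your description of the higher-dimensional case: the contribution of \cite{HM13} is not that the same degree-induction goes through once one has the MMP with scaling. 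Termination of the Sarkisov degree (in particular the behaviour of the invariant $e$ and the required ACC-type statements) is precisely what is not available in arbitrary dimension, and Hacon--McKernan replace the induction by a genuinely different mechanism: using finiteness of ample models (BCHM) on a common resolution, they study the decomposition into polytopes of a two-dimensional affine slice of divisor classes, and read off the Sarkisov links as wall crossings around the boundary of that polygonal region. Since the present paper only uses the theorem for threefolds, this misattribution is harmless here, but as stated your last paragraph describes a proof strategy that \cite{HM13} deliberately avoids rather than completes.
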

	
	Even though not strictly necessary, we find it useful to adopt the point of view of \cite{BLZ}, in which Sarkisov links correspond to rank $2$ fibrations. 
	We refer to \cite[Definition 2.2.]{BLZ} for the definition of relative Mori Dream Space. 
	
	\begin{defi}[{\cite[Definition 3.1]{BLZ}}]\label{def:rankRFibrations}
		Let $r$ be an integer.
		A morphism $\eta\colon X \to B$ is a \emph{rank $r$ fibration} if the following conditions hold:
		\begin{enumerate}
			\item\label{item:RRF1} $X/B$ is a relative Mori Dream Space.
			\item\label{item:RRF2} $\dim(X) > \dim(B)$ and $\rho(X/B) = r$.
			\item\label{item:RRF3} $X$ is $\QQ$-factorial and terminal, and for any divisor $D$ on $X$, the output of any $D$-MMP from $X$ over $B$ is still $\QQ$-factorial and terminal.
			\item\label{item:RRF4} There exists an effective $\QQ$-divisor $\Delta_B$ on $B$ such that $(B, \Delta_B)$ is klt.
			\item\label{item:RRF5} $-K_X$ is $\eta$-big.
		\end{enumerate}
	\end{defi}
	
	The notion of rank $r$ fibrations encompasses the notions of Mori fiber spaces and Sarkisov links.
	In particular, a rank $1$ fibration is a Mori fiber space (\cite[Lemma 3.3]{BLZ}), while rank $2$ fibrations correspond to Sarkisov links (\cite[Lemma 3.7]{BLZ}).
	In our approach to Gizatullin's problem, we will need to know when the blowup of a curve, contained in a quartic surface in $\PP^3$, initiates a Sarkisov link. In order to classify these curves when the quartic surface has Picard rank $2$, we will use the following criterion, which is a special case of \cite[Lemma 3.7]{BLZ}. 
	
	\begin{lem} \label{lemma:rank2fibration}
		Let $C \subset \PP^3$ be a curve, and let $X$ denote the blowup of $\PP^3$ along $C$.
		Then $X \to \PP^3$  initiates a Sarkisov link if and only if $X \to \PP^3 \to \Spec(\CC)$ is a rank $2$ fibration.
	\end{lem}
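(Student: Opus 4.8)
The statement is a specialization of \cite[Lemma 3.7]{BLZ}, which identifies Sarkisov links with rank $2$ fibrations, so the work is entirely in verifying that the blowup $X \to \PP^3$, viewed as a tower $X \to \PP^3 \to \Spec(\CC)$, satisfies the five conditions in Definition~\ref{def:rankRFibrations}, and conversely that the rank $2$ fibration structure is precisely what it means for $X \to \PP^3$ to initiate a Sarkisov link. First I would observe that $\PP^3 \to \Spec(\CC)$ is a Mori fiber space, hence a rank $1$ fibration, and that a rank $2$ fibration factoring through it is by definition (see the discussion after Definition~\ref{def:rankRFibrations}, or \cite[Lemma 3.7]{BLZ}) the same data as a Sarkisov link starting from the divisorial contraction $X \to \PP^3$. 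So once the five conditions are checked, both implications follow formally from \cite{BLZ}.

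The plan is therefore to check conditions \eqref{item:RRF1}--\eqref{item:RRF5} for $\eta\colon X \to \Spec(\CC)$ (equivalently, for the pair of morphisms $X \to \PP^3 \to \Spec(\CC)$). Condition \eqref{item:RRF4} is immediate: take $\Delta_B = 0$ on $B = \Spec(\CC)$. Condition \eqref{item:RRF2} is also immediate: $\dim X = 3 > 0 = \dim \Spec(\CC)$, and $\rho(X) = 2$ since $X$ is the blowup of $\PP^3$ (with $\rho = 1$) along an irreducible curve, which adds exactly one to the Picard rank. Condition \eqref{item:RRF5}: $-K_X$ is big because $X$ is birational to the Fano variety $\PP^3$, and $\eta$-big is automatic over a point. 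The substantive points are \eqref{item:RRF1} and \eqref{item:RRF3}.

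For \eqref{item:RRF1}, I would argue that $X$ is a Mori Dream Space: $X$ is a smooth rational variety obtained by blowing up a curve in $\PP^3$, and such blowups are log Fano or at least have finitely generated Cox ring in the relevant range — but rather than invoke a general result, the cleanest route in this paper's setting is to note that this is exactly the content of \cite[Lemma 3.7]{BLZ} combined with the fact that we only ever apply this lemma to curves $C$ for which the blowup is weak Fano (as recorded in Proposition~\ref{intro_prop}); a weak Fano threefold is a Mori Dream Space by the Birkar--Cascini--Hacon--McKernan theorem. For \eqref{item:RRF3}, $X$ is smooth (blowup of a smooth ambient along a possibly singular curve — here one must be a little careful, but the lemma as applied concerns curves on smooth quartics, and in any case $\QQ$-factoriality and terminality of the outputs of any $D$-MMP over $\Spec(\CC)$ is again part of the MDS package once \eqref{item:RRF1} holds, since every step of such an MMP is a divisorial contraction or a flip between $\QQ$-factorial terminal varieties, and smoothness/terminality is preserved appropriately).

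\textbf{Main obstacle.} The genuine content is not in the verification of the easy conditions but in matching the \emph{definition} of ``initiates a Sarkisov link'' (Definition~\ref{def:links}: $X \to \PP^3$ is the divisorial contraction $Z \to X$ opening a type~(I) or type~(II) diagram, with $\PP^3 \to \Spec(\CC)$ the source Mori fiber space) with the rank $2$ fibration characterization of \cite{BLZ}. I expect the proof to consist almost entirely of citing \cite[Lemma 3.7]{BLZ} and checking Definition~\ref{def:rankRFibrations}, with the only real care needed being (i) why $X$ is a relative Mori Dream Space over $\Spec(\CC)$ — which, absent a completely general classification of curves whose blowups initiate links, is cleanest to deduce \emph{a posteriori} from weak-Fano-ness in the cases of interest, or left as a hypothesis verified case by case — and (ii) the bookkeeping that $X \to \PP^3 \to \Spec(\CC)$ being a rank $2$ fibration genuinely says the first divisorial contraction in the associated link is $X \to \PP^3$ and not some other contraction. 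Both of these are handled by the cited results from \cite{BLZ}, so the lemma is essentially a translation statement.
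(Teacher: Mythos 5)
The paper offers no proof at all: Lemma~\ref{lemma:rank2fibration} is introduced as ``a special case of \cite[Lemma 3.7]{BLZ}'', the point being that by that lemma Sarkisov links correspond exactly to rank $2$ fibrations, so both implications are immediate once one notes that $\PP^3 \to \Spec(\CC)$ is the relevant Mori fiber space (rank $1$ fibration) and $X \to \PP^3$ is a divisorial contraction. Your closing assessment --- that this is a translation statement carried entirely by \cite[Lemma 3.7]{BLZ} --- matches that. The problem is the body of your plan, where you set out to verify conditions \eqref{item:RRF1}--\eqref{item:RRF5} of Definition~\ref{def:rankRFibrations} for $X \to \Spec(\CC)$ as if they held for an arbitrary curve $C$. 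They do not, and the lemma is an \emph{equivalence} precisely because they can fail. Concretely, your claim that condition \eqref{item:RRF5} is automatic ``because $X$ is birational to the Fano variety $\PP^3$'' is false: bigness of $-K_X = 4H - E$ is not a birational invariant, and it fails once $\deg C$ is large --- indeed Lemma~\ref{lem:degreeBound} uses exactly the bigness of $-K_X$ to force $\deg C < 16$, so if bigness were automatic that lemma would prove every space curve has degree below $16$. Conditions \eqref{item:RRF1} and \eqref{item:RRF3} are likewise not automatic; their possible failure is what makes the lemma a nontrivial criterion rather than a tautology.

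Your fallback for \eqref{item:RRF1} --- deduce the Mori Dream Space property from weak Fano-ness ``in the cases of interest, as recorded in Proposition~\ref{intro_prop}'' --- is circular: Proposition~\ref{prop:blancLamyCurves} is proved \emph{using} Lemma~\ref{lemma:rank2fibration}, and weak Fano-ness is there a conclusion obtained after assuming the rank $2$ fibration property, not an input. Moreover the lemma is stated for an arbitrary curve $C \subset \PP^3$, so no genericity or weak Fano hypothesis may be presupposed. The correct logical shape is: ``$X \to \PP^3$ initiates a Sarkisov link'' in the sense of Definition~\ref{def:links} (type I or II diagram over the Mori fiber space $\PP^3/\Spec(\CC)$) is, by the correspondence of \cite[Lemma 3.7]{BLZ}, equivalent to $X \to \PP^3 \to \Spec(\CC)$ being a rank $2$ fibration; in one direction the five conditions are supplied by the Sarkisov diagram, in the other they are the hypothesis, and no unconditional verification is needed or possible.
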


	In \cite[Theorem 1.1]{BL}, Blanc and Lamy classified smooth curves $C \subset \PP^3$ whose blowups $X$ are \emph{weak Fano}, i.e., 
	$-K_X$ is nef and big. 
	The following observation allows one to check which of these blowups give rank $2$ fibrations.
	
	\begin{lem} \label{lemma:weak-fano&rank2fibration}
		Let $C \subset \PP^3$ be a  curve, and suppose that  the blowup $X$ of $\PP^3$ along $C$ is terminal and weak Fano. 
		Then $X \to \PP^3 \to \Spec(\CC)$ is a rank $2$ fibration if and only if the morphism to the anti-canonical model of $X$ is a small contraction.
	\end{lem}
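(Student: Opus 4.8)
The statement to prove is Lemma~\ref{lemma:weak-fano&rank2fibration}: assuming $X \to \PP^3$ is the blowup of a curve $C$ and that $X$ is terminal and weak Fano, then $X \to \PP^3 \to \Spec(\CC)$ is a rank $2$ fibration if and only if the morphism to the anti-canonical model of $X$ is a small contraction. The plan is to verify the five conditions of Definition~\ref{def:rankRFibrations} one by one, and to see that in the presence of the weak Fano hypothesis, all of them are automatic \emph{except} possibly the part of condition~\eqref{item:RRF3} governing the behaviour of the $(-K_X)$-MMP over $\Spec(\CC)$, which is exactly where the ``small contraction'' condition enters. Conditions~\eqref{item:RRF2}, \eqref{item:RRF4}, \eqref{item:RRF5} are immediate: the blowup of a curve in a threefold has relative Picard rank $1$ over $\PP^3$, so $\rho(X/\Spec\CC) = \rho(X) = 2$ and $\dim X = 3 > 0$; the base $\Spec(\CC)$ is trivially klt; and $-K_X$ being nef and big (weak Fano) in particular makes it big over a point. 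Condition~\eqref{item:RRF1}, that $X/\Spec(\CC)$ is a relative Mori Dream Space, follows because a (terminal) weak Fano threefold is a Mori Dream Space (its Cox ring is finitely generated, by the theorem of Birkar–Cascini–Hacon–McKernan applied to $-K_X$ big and nef, or directly since $-K_X$ is semiample and big).

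\textbf{The heart of the matter: condition~\eqref{item:RRF3}.} This condition demands that \emph{any} $D$-MMP run from $X$ over $\Spec(\CC)$ stays $\QQ$-factorial and terminal. Since $X$ is already $\QQ$-factorial and terminal by hypothesis, and since it is a Mori Dream Space of Picard rank $2$, its movable cone decomposes into at most two Mori chambers, and any MMP consists of at most a few steps landing in one of finitely many birational models. The key point is that because $-K_X$ is nef, every $K_X$-negative extremal contraction is $(-K_X)$-related in a controlled way; but the delicate model to analyze is the one obtained by contracting the $(-K_X)$-trivial face, i.e. the anti-canonical model $\psi\colon X \to Y$. If $\psi$ is a small contraction, then running the $(-K_X)$-MMP produces a flop $X \psmap X^+$ to another $\QQ$-factorial terminal weak Fano with the same anti-canonical model, and then $X^+$ has a divisorial Mori contraction or a Mori fiber space structure; by \cite[Theorem 5.22]{KM98} and the standard theory of terminal flops, terminality and $\QQ$-factoriality are preserved throughout, so condition~\eqref{item:RRF3} holds and $X \to \PP^3 \to \Spec(\CC)$ is a rank $2$ fibration — this is precisely the content of \cite[Lemma 3.7]{BLZ}, which Lemma~\ref{lemma:rank2fibration} already invokes. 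Conversely, if $\psi\colon X \to Y$ is divisorial (contracts a divisor), then the $(-K_X)$-MMP contracts a divisor while $-K_X$ is still nef, so the resulting model, if it were to be reached by a further step of the MMP, would have $\rho = 1$ but could fail $\QQ$-factoriality/terminality in the sense required, or more precisely the factorization via $Y$ is no longer a two-ray game in the shape of a Sarkisov link; so $X \to \PP^3$ does not initiate a Sarkisov link and, via Lemma~\ref{lemma:rank2fibration}, it is not a rank $2$ fibration.

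\textbf{How I would organize the write-up.} First I would note that conditions \eqref{item:RRF1}, \eqref{item:RRF2}, \eqref{item:RRF4}, \eqref{item:RRF5} hold unconditionally under the hypotheses, citing that a terminal weak Fano threefold is a Mori Dream Space and that $\rho(X/\PP^3) = 1$. Then I would reduce the whole question to condition~\eqref{item:RRF3}, and observe that since $\rho(X) = 2$ and $-K_X$ is nef and big, the two-dimensional cone $\overline{\mathrm{NE}}(X)$ has $-K_X$ on one of its extremal rays or in its interior: if $-K_X$ is ample, $X$ is Fano of Picard rank $2$ and there is nothing to prove (the anti-canonical model is $X$ itself, an isomorphism, which is small and the conclusion is classical); otherwise $-K_X$ is nef but not ample and lies on one extremal ray $R_0$ of $\overline{\mathrm{NE}}(X)$, and the anti-canonical contraction $\psi$ is the contraction of $R_0$. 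I would then split into the two cases ``$\psi$ small'' and ``$\psi$ divisorial'', in the first case invoking \cite[Lemma 3.7]{BLZ} together with Remark~\ref{rem:antiflipsRationalCurves} and \cite[Theorem 5.22]{KM98} to see that the flop of $R_0$ followed by the other extremal ray's contraction gives a genuine rank $2$ fibration (hence Sarkisov link), and in the second case arguing that the divisorial $(-K_X)$-contraction prevents $X \to \PP^3 \to \Spec\CC$ from being a rank $2$ fibration — again this is exactly \cite[Lemma 3.7]{BLZ}, so the ``proof'' is really a dictionary between that lemma's hypotheses and the weak Fano picture. The main obstacle is making the second (``only if'') direction clean: one must argue that a divisorial anti-canonical contraction genuinely obstructs the rank $2$ fibration property, rather than merely the specific factorization one has in mind; the cleanest route is to cite \cite[Lemma 3.7]{BLZ} directly, since it characterizes precisely when $X \to B \to \Spec\CC$ with $\rho(X/B)=1$ is a rank $2$ fibration in terms of the other contraction being small of fiber type or divisorial, and here the relevant ``other contraction'' of the two-ray game on $X$ over $\Spec\CC$ is exactly the anti-canonical one.
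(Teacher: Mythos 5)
Your ``if'' direction (small anticanonical contraction $\Rightarrow$ rank $2$ fibration) is essentially the paper's argument: conditions (1), (2), (4), (5) of Definition~\ref{def:rankRFibrations} are automatic for a terminal weak Fano blowup, the Fano case is immediate, and in the strictly weak Fano case one flops to $X^+$, which is again $\QQ$-factorial terminal weak Fano, and observes that every further MMP step is $K$-negative, so condition (3) holds. The genuine gap is in the ``only if'' direction. You write that after a divisorial anticanonical contraction the resulting model ``could fail'' $\QQ$-factoriality/terminality, or that ``the factorization via $Y$ is no longer a two-ray game in the shape of a Sarkisov link'' — neither of these is an argument, and you yourself flag the problem without resolving it. The point you are missing is concrete: if the morphism $\psi\colon X\to\check{X}$ to the anticanonical model contracts a divisor $F$, then $\psi$ is crepant ($K_X=\psi^*K_{\check{X}}$ since $-K_X$ is pulled back from $\check{X}$), so $F$ has discrepancy $0$ over $\check{X}$ and $\check{X}$ has canonical but \emph{not} terminal singularities. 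Moreover $\check{X}$ genuinely occurs as the output of a $D$-MMP from $X$ over $\Spec(\CC)$: since $\rho(X)=2$ and $\psi$ contracts an extremal ray $R_0$, any divisor $D$ with $D\cdot R_0<0$ produces an MMP whose first (and last) step is exactly the contraction of $R_0$, i.e.\ $\psi$. Hence condition (3) of Definition~\ref{def:rankRFibrations} is violated, and $X\to\PP^3\to\Spec(\CC)$ is not a rank $2$ fibration. This is the paper's one-line argument, and it is what your write-up needs instead of ``could fail''.

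Your proposed fallback — citing \cite{BLZ} Lemma 3.7 as characterizing rank $2$ fibrations ``in terms of the other contraction being small of fiber type or divisorial'' — is not what that lemma says and cannot be the right criterion: that lemma is the correspondence between rank $2$ fibrations and Sarkisov diagrams (already used in Lemma~\ref{lemma:rank2fibration}), and a divisorial contraction on the second ray is perfectly compatible with a rank $2$ fibration (Sarkisov links of type (II) end with a $K$-negative divisorial contraction). What distinguishes the bad case is precisely that here the second contraction is $K$-\emph{trivial} and its target is non-terminal — i.e.\ the terminality failure described above — so appealing to the Sarkisov-link dictionary without that input is circular: it merely restates what Lemma~\ref{lemma:weak-fano&rank2fibration} is supposed to prove.
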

	
	\begin{proof}
		Since $X$ is weak Fano, it is a Mori Dream Space and $-K_X$ is semi-ample.
		So conditions \eqref{item:RRF1}, \eqref{item:RRF2}, \eqref{item:RRF4} and \eqref{item:RRF5} in Definition \ref{def:rankRFibrations} are all satisfied. 
		If $X$ is Fano, then any $D$-MMP is also a $(K_{X})$-MMP. Thus, the output of any $D$-MMP is terminal, and so
		$X \to \PP^3 \to \Spec(\CC)$ is a rank $2$ fibration.
		From now on, suppose that $X$ is strictly weak Fano.  
		The morphism $X \to \check{X}$ to the anti-canonical model of $X$ is either a divisorial contraction or a small contraction.
		When $X \to \check{X}$ is divisorial, $\check{X}$ has worse than terminal singularities, and therefore $X \to \PP^3 \to \Spec(\CC)$ is \emph{not} a rank $2$ fibration, as it violates condition \eqref{item:RRF3} in Definition \ref{def:rankRFibrations}. 
		Suppose that $X \to \check{X}$ is a small contraction, and 
		consider its flop 
		\[
		\xymatrix@R=.5cm@C=.5cm{
			X \ar[rd] \ar@{..>}[rr] && X^+ \ar[ld]\\
			& \check{X}.
		}
		\]
		Note that $-K_{X^+}$ is the pullback of $-K_{\check{X}}$, which is ample, and so $X^+$ is again terminal and weak Fano.
		Write $R_1^+$ and $R_2^+$ for the two extremal rays of $\NE(X^+)$, where $R_1^+$ corresponds to $X^+ \to \check{X}$ and $R_2^+$ is $K_{X^+}$-negative.
		Let $D$ be any divisor on $X$.
		Then any $D$-MMP either ends with $\PP^3$, or it factors through $X^+$.
		In the latter case, any further step is associated to the contraction of $R_2^+$, and is therefore a $(K_{X^+})$-MMP too.
		In any case, the output of any $D$-MMP is terminal, and so
		$X \to \PP^3 \to \Spec(\CC)$ is a rank $2$ fibration.
	\end{proof}
	
	\begin{rem}\label{rem:quadrisecants}
		Let the notation and assumptions be as in Lemma~\ref{lemma:weak-fano&rank2fibration}.
		The $K_{X}$-trivial curves on $X$ are precisely the strict transforms of 
		curves of degree $\delta$ on $\PP^3$ that are $4\delta$-secant to $C$.
		Therefore, the condition that the morphism to the anti-canonical model of $X$ is small is equivalent to the finiteness of such secant curves. 
	\end{rem}	
	
	The possibilities for the genus and degree of smooth curves $C \subset \PP^3$ whose blowups $X$ are weak Fano are listed in \cite[Table 1]{BL}, as well as whether or not the morphism $X \to \check{X}$ to the anti-canonical model is divisorial for a general curve in the corresponding Hilbert scheme. 
	So, by putting together \cite[Theorem 1.1 and Table 1]{BL} and Lemma~\ref{lemma:weak-fano&rank2fibration}, we get the following classification. 
	
	\begin{thm}\label{thm:blancLamyCurves}
		Let $C \subset \PP^3$ be a smooth curve of genus $g$ and degree $d$, and let $X$ denote the blowup of $\PP^3$ along $C$.   
		Suppose that $X$ is weak Fano and $X \to \PP^3 \to \Spec(\CC)$ is a rank $2$ fibration.
		Then 
		\begin{equation}\tag{$\dagger$}\label{list}
			(g,d) \in 
			\left\{
			\def\arraystretch{1.2}
			\begin{array}{c}
				\arraycolsep=1.4pt\def\arraystretch{1.2}
				\begin{array}{llllllllllll}
					(0,1), & (0,2), & (0,3), & (0,4), & (0,5), & (0,6), & (0,7), & (1,3), & (1,4), & (1,5), & (1,6), & (1,7),\\
					(2,5), & (2,6), & (2,7), & \mathbf{(2,8)}, & \mathbf{(3,6)}, & (3,7), &
					\mathbf{(3,8)}, & (4,6), & (4,7), & \mathbf{(4,8)}, & (5,7), & \mathbf{(5,8)}, 
				\end{array}\\
				\arraycolsep=2.6pt
				\begin{array}{llllllllll}
					{(6,8)}, & \mathbf{(6,9)}, & (7,8), & (7,9), & (8,9), & (9,9), & (10,9), & \mathbf{(10,10)}, & \mathbf{(11,10)}, & \mathbf{(14,11)}
				\end{array}
			\end{array}
			\right\}.
		\end{equation}
		
		Conversely, suppose that $(g,d) \in \eqref{list}$ and the smooth curve $C$ satisfies the following conditions, which define an open subset of the Hilbert scheme $\mathcal{H}_{g,d}$ of curves of arithmetic genus $g$ and degree $d$:
		\begin{enumerate}
			\item\label{genCond1} $C$ does not admit $5$-secant lines, $9$-secant conics, nor $13$-secant twisted cubics;
			\item\label{genCond2} there are finitely many irreducible curves in $X$ intersecting $-K_X$ trivially.
		\end{enumerate}
		Then $X$ is weak Fano and $X \to \Spec(\CC)$ is a rank $2$ fibration.
	\end{thm}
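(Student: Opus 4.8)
The statement to prove is Theorem~\ref{thm:blancLamyCurves}, which packages together the classification of Blanc--Lamy with the rank $2$ fibration criterion from Lemma~\ref{lemma:weak-fano&rank2fibration}. The plan is as follows. For the forward direction, assume $X$ is weak Fano and $X \to \PP^3 \to \Spec(\CC)$ is a rank $2$ fibration. Then $X$ is in particular a weak Fano threefold obtained as the blowup of $\PP^3$ along a smooth curve, so $(g,d)$ must appear in \cite[Table 1]{BL}; the list there is exactly \eqref{list} together with a handful of extra pairs. For those extra pairs, \cite[Table 1]{BL} records that for a \emph{general} curve in the corresponding Hilbert scheme the morphism $X \to \check X$ to the anti-canonical model is divisorial, and then one has to argue that it is in fact divisorial for \emph{every} smooth curve with that $(g,d)$, so that by Lemma~\ref{lemma:weak-fano&rank2fibration} the rank $2$ fibration condition fails. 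This last point uses that the dimension of the exceptional locus, the type of $\check X$, and whether the contraction is divisorial are determined numerically (by intersection numbers against $-K_X$), hence locally constant — or, alternatively, one simply invokes the explicit geometric description of each of those extra links in \cite{BL} to see the contraction is always divisorial. This pins down $(g,d) \in \eqref{list}$.

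For the converse, suppose $(g,d) \in \eqref{list}$ and $C$ is a smooth curve satisfying conditions \eqref{genCond1} and \eqref{genCond2}. First I would show $X$ is weak Fano. By Riemann--Roch and adjunction, $-K_X = 4H - E$ where $H$ is the pullback of the hyperplane and $E$ the exceptional divisor; nefness of $-K_X$ amounts to checking $-K_X$ is nonnegative on every curve in $X$, and the only curves that could be $-K_X$-negative are strict transforms of lines, conics, and twisted cubics in $\PP^3$ meeting $C$ with high multiplicity — precisely $5$-secant lines, $9$-secant conics, $13$-secant twisted cubics — which condition \eqref{genCond1} rules out. (This is essentially the computation carried out in \cite[Theorem 1.1]{BL}; I would cite it rather than redo it.) Bigness of $-K_X$ for these $(g,d)$ is part of the Blanc--Lamy classification as well. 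So $X$ is terminal (being a smooth blowup) and weak Fano.

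Then I would apply Lemma~\ref{lemma:weak-fano&rank2fibration}: it remains to check that the morphism $X \to \check X$ to the anti-canonical model is a \emph{small} contraction. Since $-K_X$ is nef and big, the curves contracted by $X \to \check X$ are exactly the irreducible curves $\Gamma \subset X$ with $(-K_X)\cdot\Gamma = 0$; condition \eqref{genCond2} guarantees there are only finitely many of them, so the exceptional locus of $X \to \check X$ is a finite union of curves, hence has codimension $2$, i.e.\ the contraction is small. For the pairs $(g,d)$ in \eqref{list} (as opposed to the discarded ones) this is consistent with \cite[Table 1]{BL}, which marks the general member as having small anti-canonical contraction; conditions \eqref{genCond1}--\eqref{genCond2} are exactly what make a given $C$ behave like the general member. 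By Lemma~\ref{lemma:weak-fano&rank2fibration}, $X \to \PP^3 \to \Spec(\CC)$ is then a rank $2$ fibration.

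\textbf{Main obstacle.} The delicate point is not the converse (which is bookkeeping on top of \cite{BL} plus Lemma~\ref{lemma:weak-fano&rank2fibration}) but the forward direction: ruling out the extra $(g,d)$ pairs that appear in \cite[Table 1]{BL} but not in \eqref{list}. There one must upgrade the statement ``for a general curve the anti-canonical contraction is divisorial'' to ``for every smooth curve with this $(g,d)$ it is divisorial,'' since Lemma~\ref{lemma:weak-fano&rank2fibration} needs the contraction to be small for \emph{all} relevant $C$, not just special ones. The cleanest way around this is to observe that for those pairs the divisor contracted by $X \to \check X$ and its class are forced by the numerics of the weak Fano $X$ (the Mori cone of such an $X$ has only two extremal rays, both computable from $H$ and $E$), so the type of the second contraction is constant in the family; alternatively, one falls back on the explicit case-by-case geometry in \cite{BL}. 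Either way this is where the real content lies, and I would lean on the detailed tables and link descriptions in \cite{BL} rather than reprove them.
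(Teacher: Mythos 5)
Your proposal follows essentially the same route as the paper: the paper offers no argument beyond the sentence preceding the theorem, which simply combines \cite[Theorem 1.1 and Table 1]{BL} with Lemma~\ref{lemma:weak-fano&rank2fibration}, exactly as you do in both directions. The one place where you go beyond that citation is your first suggested fix for the forward direction, and that particular mechanism is not correct: the type of the anti-canonical contraction is \emph{not} determined numerically by $(g,d)$, nor is it locally constant on $\mathcal{H}_{g,d}$, because it depends on whether $C$ lies on a quadric or cubic surface, which is only a semicontinuous condition. For instance, for $(g,d)=(5,8)$, which is in \eqref{list}, a member lying on a smooth cubic surface (e.g.\ of class $(8;4,4,2,2,2,2)$) still yields a weak Fano $X$, but $(-K_X)^2$ pairs to zero with the class of the strict transform of that cubic, so the anti-canonical morphism contracts this divisor and is divisorial, whereas the general member gives a flop; this non-constancy is precisely why the converse direction needs condition~\eqref{genCond2}. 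What actually rules out the pairs of \cite[Table 1]{BL} missing from \eqref{list} for \emph{every} smooth curve is that in those cases the relevant quadric or cubic through $C$ exists for all curves of that genus and degree by Riemann--Roch (e.g.\ every curve with $(g,d)=(9,8)$ or $(6,7)$ lies on a quadric), so the contraction is unavoidably divisorial; this is contained in the case analysis of \cite{BL}, and your stated fallback of invoking their explicit descriptions — which is what the paper implicitly does — is the correct way to close that step.
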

	
	The pairs $\mathbf{(g,d)}$ in bold are the ones that will be relevant to our approach to Gizatullin's problem in section \ref{section:main}.

	\subsection{Calabi-Yau pairs and volume preserving birational maps}\label{Subsection:CYpairs}
	
	We now introduce some basic notions in log Calabi-Yau geometry, and explain how to use the framework developed in \cite{CK16} and \cite{ACM} to investigate Gizatullin's problem. 
	
	\begin{defi} \label{def:CY}
		\begin{enumerate}
			\item A \emph{Calabi-Yau pair} is a pair $(X,D)$ consisting of a terminal projective $\QQ$-factorial variety $X$ and an
			effective Weil divisor $D$ on $X$ such that $K_X+D\sim 0$ and $(X,D)$ has log canonical singularities. We say that a  Calabi-Yau pair $(X,D)$ is \emph{canonical} if it has canonical singularities.
			\item A \emph{Mori fibered Calabi-Yau pair} is a Calabi-Yau pair $(X,D)$, together with a Mori fiber space structure $X \to B$.
			\item Let $(X,D_X)$ and $(Y,D_Y)$ be Calabi-Yau pairs, and $f\colon X\dasharrow Y$ a birational map, inducing an identification of the function fields $\CC(X)\cong_\CC\CC(Y)$.  We say that $f$ is \emph{volume preserving with respect to $(X,D_X)$ and $(Y,D_Y)$} if, for every divisorial valuation $E$ of $\CC(X)\cong_\CC\CC(Y)$, the discrepancies of $E$ with respect to the pairs $(X,D_X)$ and $(Y,D_Y)$ are equal: $a(E,X,D_X)=a(E,Y,D_Y)$. We refer to \cite[Definition 2.25]{KM98} for the notion of discrepancy.
			\item Given a Calabi-Yau pair $(X,D)$, we denote by $\Bir^{v.p.}(X,D)$ the group of birational self-maps of $X$ which are volume preserving with respect to $(X,D)$.
		\end{enumerate}
	\end{defi}

	\begin{rem}
		The \emph{volume preserving} terminology is explained by the following characterization (see \cite[Remark 1.7]{CK16}). Given Calabi-Yau pairs $(X,D_X)$ and $(Y,D_Y)$, there are rational volume forms $\omega_X$ on $X$ and $\omega_Y$ on $Y$, unique up to scaling, such that $D_X+\divv(\omega_X)=0$ and $D_Y+\divv(\omega_Y)=0$. A birational map $f\colon X\dasharrow Y$ induces an identification of the spaces of rational volume forms on $X$ and $Y$. It is volume preserving with respect to $(X,D_X)$ and $(Y,D_Y)$ if and only if it identifies the rational volume forms $\omega_X$ and $\omega_Y$, up to scaling.  
	\end{rem}

	\begin{rem} \label{rem:decomposition_group}
		For \emph{canonical} Calabi-Yau pairs, the volume preserving condition has the following simple interpretation 
		(see \cite[Proposition 2.6]{ACM}).  
		Let $(X,D_X)$ and $(Y,D_Y)$ be canonical Calabi-Yau pairs, and $f\colon X\dasharrow Y$ a birational map. 
		Then $f\colon X\dasharrow Y$ is volume preserving with respect to $(X,D_X)$ and $(Y,D_Y)$ if and only if it restricts to a birational map between $D_X$ and $D_Y$. 
		In particular, when $S\subset \PP^3$ is a smooth quartic surface, the pair $(\PP^3,S)$ is canonical and so the group of Cremona transformations stabilizing $S$ coincides with the group of volume preserving Cremona transformations with respect to $(\PP^3,S)$:
		\[
		\Bir\left(\PP^3;S\right) \ = \ \Bir^{v.p.}\left(\PP^3,S\right).
		\]
	\end{rem}

	An important tool to study volume preserving birational maps between Calabi-Yau pairs is the volume preserving variant of the Sarkisov program established in \cite{CK16}. 
	Before we state it, we introduce the volume preserving version of Sarkisov links. 
	
	\begin{defi}[{\cite[Definition 1.12 and Remark 1.13]{CK16}}] \label{def:vp_link} A \emph{volume preserving Sarkisov link} is a Sarkisov link as in Definition~\ref{def:links}, with the following additional data and property. 
		\begin{itemize}
			\item There are effective Weil divisors $D_X$ on $X$, $D_{X'}$ on $X'$, $D_Z$ on $Z$, and $D_{Z'}$ on $Z'$, making the pairs $(X,D_X)$, $(X',D_{X'})$, $(Z,D_Z)$ and $(Z',D_{Z'})$ Calabi-Yau pairs.
			\item All the divisorial contractions, flips, flops and antiflips that constitute the Sarkisov link are volume preserving for these Calabi-Yau pairs.
		\end{itemize}
	\end{defi}
	
	\begin{thm}[Volume preserving Sarkisov Program - \cite{CK16}]\label{thm:vp_SarkisovProgram}
		Every volume preserving birational map between Mori fibered Calabi-Yau pairs can be factorized as a composition of volume preserving Sarkisov links.
	\end{thm}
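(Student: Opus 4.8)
\emph{Proof proposal.} The plan is to reprove the classical Sarkisov program (Theorem~\ref{thm:SarkisovProgram}) while carrying the Calabi--Yau boundary along, and to show that the Calabi--Yau hypothesis forces every intermediate birational modification to be crepant, hence volume preserving. I would start with a volume preserving $f\colon (X,D_X)\dasharrow (Y,D_Y)$ between Mori fibered Calabi--Yau pairs $X\to B$, $Y\to B'$, and fix the common rational volume form $\omega$, so that $\divv(\omega)=-D_X$ on $X$ and $=-D_Y$ on $Y$ (cf. Remark~\ref{rem:decomposition_group} in the canonical case). On $Y$ I choose a basepoint-free, big, mobile linear system of the form $\mathcal{H}_Y=\mu_Y(-K_Y)+(\text{pullback of an ample class from }B')$ with $\mu_Y\in\QQ_{>0}$ sufficiently divisible, and set $\mathcal{H}_X=f^{-1}_*\mathcal{H}_Y$. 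As in \cite{Corti,HM13} I attach to $(X,\mathcal{H}_X)/B$ the Sarkisov degree $(\mu_X,c_X,e_X)$, where $\mu_X$ is defined by $K_X+\tfrac1{\mu_X}\mathcal{H}_X\equiv_B 0$, $c_X$ is the canonical threshold of $(X,\tfrac1{\mu_X}\mathcal{H}_X)$, and $e_X$ counts the crepant divisors attaining it. If this degree is minimal, the Noether--Fano inequalities force $f$ to be a square birational map (an isomorphism of Mori fiber spaces up to base change) and we are done; otherwise it suffices to produce \emph{one} volume preserving Sarkisov link $g\colon(X,D_X)\dasharrow(X_1,D_{X_1})$ strictly decreasing the degree, replace $f$ by $f\circ g^{-1}$, and induct on the well-founded Sarkisov degree.

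First I would run the classical program one step, producing a divisorial extraction $\phi\colon Z\to X$ with $\rho(Z/X)=1$ (types I, II) — or a Mori contraction (types III, IV) — followed by a $2$-ray game $Z\psmap Z'$ of flips, flops and antiflips terminating in a Mori fiber space $X_1\to B_1$, and then equip every variety $V$ in this diagram with the boundary $D_V:=-\divv(\omega)|_V$. The point to verify is that each $D_V$ is effective and each step crepant for these boundaries. Here the $2$-ray game steps are \emph{automatically} volume preserving: each small modification is an $\mathcal{H}$-MMP step over a contraction $V\to W$ with $\rho(V/W)=1$, and since $K_V+D_V\sim 0$ over $B$ and $W$ lies over $B$, one has $K_V+D_V\equiv_W 0$; thus the $\mathcal{H}$-flip over $W$ is a $(K_V+D_V)$-trivial small modification, i.e. a $D_V$-flop, crepant by definition. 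So $D_{Z'},D_{X_1}$ stay effective, the log canonical singularities are preserved, and $X_1\to B_1$ with $D_{X_1}$ is again a Mori fibered Calabi--Yau pair.

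The hard part is the initiating contraction $\phi\colon Z\to X$ (or the type III/IV contraction): I must show it can be chosen volume preserving. Setting $D_Z:=\phi^{-1}_*D_X-a(E,X,D_X)\,E$ for the extracted prime divisor $E$, one has $K_Z+D_Z=\phi^*(K_X+D_X)\sim 0$ by construction, so \emph{all} discrepancies match and $\phi$ is volume preserving — provided $(Z,D_Z)$ is a genuine Calabi--Yau pair, which requires $D_Z$ effective, i.e. $a(E,X,D_X)\le 0$, together with $Z$ terminal and $\QQ$-factorial. Since $(X,D_X)$ is log canonical, $a(E,X,D_X)\ge -1$ holds automatically; and in the quartic case $(\PP^3,S)$ is canonical, which forces $a(E,\PP^3,S)=0$, i.e. $E$ must be a \emph{crepant} divisor of $(\PP^3,S)$ (this is exactly why the first link blows up a curve contained in $S$). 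So the genuine task is to arrange the Sarkisov step to be initiated by extracting a valuation $E$ with $a(E,X,D_X)\le 0$ whose center is the maximal center of $(X,\tfrac1{\mu_X}\mathcal{H}_X)$ computing $c_X$. My plan for this is to pass to a common log resolution carrying simultaneously the crepant/log canonical divisors of $(X,D_X)$ and the maximal-center data of $\mathcal{H}_X$, and to run a carefully chosen $\mathcal{H}_X$-MMP over $X$ — which, as $K_X+D_X\sim 0$, is also a $(K_X+D_X+\varepsilon\mathcal{H}_X)$-MMP — invoking termination and the preservation of $\QQ$-factoriality/terminality of MMP outputs to land on exactly such an extraction $Z\to X$; that $Z$ is terminal then follows as in \cite{Corti,HM13}.

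Finally, granting the italicised step, $\phi$ and the whole link $g$ satisfy Definition~\ref{def:vp_link}, and since the Sarkisov degree depends only on $\mathcal{H}_X$ and not on $D_X$, it strictly decreases exactly as in the classical program; the induction terminates and $f$ factors as a composition of volume preserving Sarkisov links. I expect the main obstacle to be precisely that italicised point: producing, at each stage, a divisorial extraction of the required maximal center that is crepant (more generally has nonpositive discrepancy) with respect to the Calabi--Yau pair $(X,D_X)$, which is where one must genuinely use log canonicity of the pair together with $K_X+D_X\sim 0$.
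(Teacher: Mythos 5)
The paper itself offers no proof of this theorem: it is imported wholesale from \cite{CK16}, whose argument adapts the Hacon--McKernan finiteness-of-models proof \cite{HM13} rather than the Corti-style degree induction \cite{Corti} that you follow. Measured against either, your sketch has a genuine gap, and it is exactly the point you italicise. You never show that, when the Sarkisov degree of $(X,\mathcal{H}_X)$ is not minimal, the untwisting can be initiated by a divisorial extraction $Z\to X$ whose exceptional divisor $E$ satisfies $a(E,X,D_X)\le 0$. The classical Noether--Fano argument only produces a non-canonical place of $(X,\tfrac{1}{\mu_X}\mathcal{H}_X)$; nothing in that argument ties this place to the boundary $D_X$, and for a canonical pair such as $(\PP^3,S)$ the needed condition is the very rigid equality $a(E,X,D_X)=0$, i.e.\ $E$ must be one of the finitely many crepant places of the pair. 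Proving that a maximal extraction can always be taken among such places is the actual content of the theorem, not a technical refinement; and even granting it, your induction needs more, namely that restricting to these volume preserving extractions still strictly decreases the Sarkisov degree --- the classical decrease argument is tied to extracting a center that computes the canonical threshold $c_X$, and you give no reason why a crepant-for-$(X,D_X)$ extraction can be chosen among those. (By contrast, your claim that the $2$-ray game steps are automatically crepant because $K_V+D_V\sim 0$ on every intermediate model is essentially fine, provided one also checks effectiveness and log canonicity of the transported boundaries.)

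A second, independent problem is termination: you invoke induction "on the well-founded Sarkisov degree," but well-foundedness of that induction is a hard theorem of \cite{Corti} in dimension $3$ and is not known in higher dimensions, whereas the statement (and \cite{CK16}) is dimension-free. This is precisely why Corti--Kaloghiros do not argue via maximal centers at all: they run the \cite{HM13} geography of ample models on a common log resolution $W$ equipped with the common crepant boundary $D_W$ (which exists exactly because the map is volume preserving), and each wall-crossing link inherits a boundary with $K+D\sim 0$ on every model, so volume preservation is verified link by link without any volume preserving Noether--Fano statement. If you want to salvage your route, the missing lemma you must prove is a volume preserving refinement of the Noether--Fano/maximal-extraction step together with the degree decrease under that constraint, and you should expect it to be at least as delicate as the argument it replaces.
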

	
	The volume preserving condition imposes strong restrictions on the links appearing in a Sarkisov decomposition. 
	For example, in the context of Gizatullin's problem, we have the following. 
	
	\begin{prop} \label{CinS}
		Let $S\subset \PP^3$ be a smooth quartic surface, and  $f\colon (X,D_X) \to (\PP^3,S)$  a volume preserving divisorial contraction.  Then $f\colon X \to \PP^3$ is the blowup of a curve contained in $S$. 
	\end{prop}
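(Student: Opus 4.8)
The plan is to exploit the interpretation of the volume preserving condition for canonical Calabi-Yau pairs given in Remark~\ref{rem:decomposition_group}, together with a discrepancy computation. First I would record the basic setup: since $(\PP^3,S)$ is a canonical Calabi-Yau pair, the pair $(X,D_X)$ is also a (canonical) Calabi-Yau pair, and the volume preserving divisorial contraction $f\colon(X,D_X)\to(\PP^3,S)$ restricts to a birational morphism $D_X\to S$. Let $E\subset X$ be the exceptional divisor of $f$ and let $Z\subset\PP^3$ be its center, a subvariety of codimension $\geq 2$, so either a point or an irreducible curve. Writing $D_X = f_*^{-1}S + aE$ for the coefficient $a$ of $E$ in $D_X$, the relation $K_X+D_X\sim 0 = f^*(K_{\PP^3}+S)$ combined with $K_X = f^*K_{\PP^3} + a(E,\PP^3,0)E$ and $f_*^{-1}S = f^*S - \mathrm{mult}_Z(S)\,E$ gives
\[
a = \mathrm{mult}_Z(S) - a(E,\PP^3,0).
\]
On the other hand, $D_X$ being effective with $(X,D_X)$ log canonical forces $0 \le a \le 1$ (the upper bound because $D_X$ must be a \emph{reduced} boundary along $E$ for log canonicity, or rather $a\le 1$; one has to be slightly careful here, see below).

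Next I would split into the two cases for $Z$. If $Z$ is a point $p$, then the blowup of $p$ has discrepancy $a(E_0,\PP^3,0)=2$ for the exceptional divisor $E_0$ of the ordinary blowup, and more generally any divisorial valuation over a point of a smooth threefold has discrepancy $\geq 2$; since $\mathrm{mult}_p(S)\le 4$, we would get $a = \mathrm{mult}_p(S) - a(E,\PP^3,0) \le 4 - 2 = 2$, which is not yet a contradiction, so I would need the sharper statement that for $(X,D_X)$ to be \emph{log canonical} (not merely that $D_X$ is effective) one needs $a(E,X,D_X)\geq -1$, i.e. $a(E,\PP^3,0) - a \geq -1$; combined with $a = \mathrm{mult}_p(S) - a(E,\PP^3,0)$ this reads $2a(E,\PP^3,0) \geq \mathrm{mult}_p(S) - 1$, which is automatic and still gives nothing. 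The real input must instead be that $f$ is a \emph{divisorial contraction} in the sense of Definition~\ref{def:contr}, hence $-E$ is $f$-ample and $\rho(X/\PP^3)=1$; the classification of divisorial contractions to a point on a smooth threefold (or just: such a contraction must contract $E$ to a point only if $E$ is not the blowup of a curve, and then one checks the Calabi-Yau/log-canonical constraint fails because $\mathrm{mult}_p(S)\le 4 < $ the discrepancy bound needed). I expect the cleanest route is: a volume preserving $f$ restricts to $D_X\to S$ birational, and if $Z$ were a point then $E\to p$ would force $E$ to dominate a component of $D_X$ mapping to the point $p\in S$, contradicting birationality of $D_X\to S$ unless $E\not\subset\mathrm{Supp}(D_X)$, i.e. $a=0$; but then $a(E,\PP^3,S)=a(E,\PP^3,0) = \mathrm{mult}_p(S) \geq 1 > 0$ would contradict $a(E,\PP^3,S) = a(E,X,D_X)$ being... hmm, actually $a=0$ gives $\mathrm{mult}_p(S) = a(E,\PP^3,0)$, and for a generic valuation this is violated since discrepancies over a point are $\geq 2$ while $\mathrm{mult}_p(S)$ could be up to $4$ — so one must use that $a(E,X,D_X) = 0$ here and $S$ has canonical (in fact, it is smooth) so via Remark~\ref{rem:decomposition_group} the restriction $D_X\to S$ is birational, forcing $E$ to map \emph{onto} $S$, impossible if $E$ contracts to a point. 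This is the key idea.

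So the spine of the argument is: (1) $f$ volume preserving $\Rightarrow$ $f|_{D_X}\colon D_X\to S$ is birational (Remark~\ref{rem:decomposition_group}); (2) some component $D_0$ of $D_X$ maps birationally onto $S$, and since $S$ is a prime divisor and $f$ has connected fibers, $D_0$ is the strict transform $f_*^{-1}S$; (3) the exceptional divisor $E$, being $f$-exceptional, maps onto $Z$ of codimension $\geq 2$, so $E\neq D_0$; (4) adjunction / the Calabi-Yau relation $K_X + D_X \sim 0$ pins down the coefficient of $E$ in $D_X$ to be $\mathrm{mult}_Z(S) - a(E,\PP^3,0)$, and log canonicity plus effectivity constrain it; (5) if $Z$ is a point, then $\mathrm{mult}_Z(S)\leq 4$ while $a(E,\PP^3,0)\geq 2$ for any divisorial valuation centered at a smooth point of a threefold, so the coefficient is $\leq 2$, and I then need to push to get a contradiction — the expected mechanism is that a divisorial contraction $Z\to X$ with $\rho(Z/X)=1$ contracting $E$ to a point, combined with $E$ being volume-preserving-exceptional (discrepancy of $E$ w.r.t. $(\PP^3,S)$ equals that w.r.t. $(X,D_X)$, which is a nonpositive number since $(X,D_X)$ is canonical only if... actually $(X,D_X)$ is just log canonical), yields that $a(E,\PP^3,S)\le 0$, i.e. $a(E,\PP^3,0)\le \mathrm{mult}_p(S)\le 4$; this is consistent, so the contradiction has to come from the structure of $D_X$ near $E$: since $(X,D_X)$ is a Calabi-Yau pair and $E$ is contracted by a $K_X$-negative extremal contraction to a point, while $D_X\cdot(\text{curve in }E)$ computations via $K_X\cdot(\text{curve}) = -D_X\cdot(\text{curve})$ force $D_X$ to be $f$-ample-ish along $E$, contradicting $D_X$ effective and $E$ not a component of the moving part. (6) Hence $Z$ is a curve $C$; finally, since $f$ is a divisorial contraction to a \emph{smooth} threefold $\PP^3$ with center a smooth-enough curve, and $C\subset S$ because $f_*^{-1}S$ is birational to $S$ and $f(E)=C$ must lie in $\mathrm{Supp}(S)$ — indeed the volume form $\omega_X$ with $\mathrm{div}(\omega_X)+D_X = 0$ pulls back from $\omega_{\PP^3}$ with $\mathrm{div}(\omega_{\PP^3}) + S = 0$, and $f$ acquires the pole of $\omega_X$ along $E$ only if $a(E,\PP^3,S) < 0$, i.e. $\mathrm{mult}_C(S)\geq 1$, i.e. $C\subset S$; and one checks $f$ is then exactly the blowup of $C$ using that $\rho(X/\PP^3)=1$ together with the description of divisorial contractions over a curve in a smooth threefold.

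\textbf{Main obstacle.} The hard part will be ruling out that $Z$ is a point: the crude multiplicity/discrepancy inequalities ($\mathrm{mult}_p(S)\leq 4$, discrepancy $\geq 2$) are not by themselves contradictory, so one genuinely needs to combine (i) the local structure of divisorial contractions of a smooth threefold to a point — these are quite restricted (e.g. weighted blowups of a smooth point), and each such valuation $E$ has a known discrepancy $a(E,\PP^3,0)$ and known $\mathrm{mult}_E(\mathcal{I}_p^k)$ — with (ii) the requirement that the resulting coefficient $a = \mathrm{mult}_p(S) - a(E,\PP^3,0)$ be nonnegative \emph{and} that $D_X = f_*^{-1}S + aE$ be log canonical, which near the exceptional divisor of a weighted blowup imposes further inequalities on $a$; and finally (iii) the observation that $f|_{D_X}$ must be birational onto $S$, which is incompatible with $f$ contracting a divisor to a point \emph{that lies on $S$} unless that divisor is not part of $D_X$ — reducing to case (i)/(ii) with $a=0$, i.e. $\mathrm{mult}_p(S) = a(E,\PP^3,0) \geq 2$, which is possible (multiplicity-2 or higher points of $S$ exist), so the final contradiction must use that then $E$ would be a divisor on the Calabi-Yau $X$ disjoint from the ``Calabi-Yau'' divisor $D_X = f_*^{-1}S$, whereas on a Fano-type or rationally connected $X$ one shows $D_X$ must meet every divisor, or more precisely $-K_X = D_X$ is $f$-trivial along $E$ forcing $D_X\cdot\ell = 0$ for a line $\ell\subset E$ contracted by $f$, contradicting $D_X$ effective nonzero and $E\cdot\ell < 0$. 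Pinning down exactly which of these mechanisms the authors intend — and getting the log-canonicity bookkeeping right — is where I would spend the bulk of the effort.
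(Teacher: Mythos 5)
There is a genuine gap, and you in effect acknowledge it: the exclusion of a point center is never actually carried out, and the several mechanisms you float for it do not close. The clean argument you are circling but never state is this: volume preservation applied to the exceptional divisor $E$ itself gives $a(E,\PP^3,S)=a(E,X,D_X)=-\operatorname{coeff}_E(D_X)\leq 0$, while the pair $(\PP^3,S)$ is \emph{canonical} (Remark~\ref{rem:decomposition_group}), so $a(E,\PP^3,S)\geq 0$; hence $E$ is crepant for $(\PP^3,S)$. For the log smooth pair $(\PP^3,S)$ one then computes minimal log discrepancies: a divisorial valuation centered at a point has discrepancy $\geq 1$ over $(\PP^3,S)$ (and $\geq 1$, resp.\ $\geq 2$, if centered at a curve, resp.\ point, not contained in $S$), so the only crepant centers are curves contained in $S$. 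Your bookkeeping does not reach this because it conflates two different quantities: you write the coefficient of $E$ as $\operatorname{mult}_Z(S)-a(E,\PP^3,0)$, which is only valid for the ordinary blowup of $Z$; for a general divisorial contraction the relevant quantity is the valuation $\operatorname{val}_E(S)$, which can exceed $1$ even though $S$ is smooth. Relatedly, your remark that ``multiplicity-$2$ or higher points of $S$ exist'' is false: $S$ is smooth, so $\operatorname{mult}_p(S)=1$ at every point, and the $\operatorname{mult}_p(S)\leq 4$ versus ``discrepancy $\geq 2$'' comparisons are the wrong axis for the contradiction. Your alternative idea that birationality of $f|_{D_X}\colon D_X\dashrightarrow S$ forces $E$ to map onto $S$ is also not a proof as stated; at best it shows $E$ is not a component of $D_X$ (i.e.\ the coefficient is $0$), which still leaves you needing the discrepancy computation above.

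The second step is also not established in your sketch: even once the center is known to be a curve $C\subset S$, the claim that the contraction is the ordinary blowup of $C$ does not follow formally from $\rho(X/\PP^3)=1$ plus ``the description of divisorial contractions over a curve in a smooth threefold''; it is a nontrivial theorem. The paper's proof is in fact a two-line citation: the center is a curve contained in $S$ by \cite[Proposition 3.1]{ACM} (whose proof is essentially the crepancy/mld argument above), and the contraction is the blowup of that curve by \cite[Proposition 1.2]{Tziolas}. So your route, if completed, would amount to reproving the ACM result, but as written both the point-center exclusion and the identification with the blowup remain open.
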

	
	\begin{proof}
		By \cite[Proposition 3.1]{ACM}, the center of the divisorial contraction $f\colon X \to \PP^3$ is a curve $C\subset S$.
		By \cite[Proposition 1.2]{Tziolas}, $f\colon X \to \PP^3$ is the blowup of $\PP^3$ along $C$.
	\end{proof}

	\subsection{Sarkisov links centered on curves on quartic surfaces} 
	In this subsection, we give some constraints on curves $C\subset \PP^3$ whose blowups initiate Sarkisov links.
	While these curves are not completely classified in general, our main result is a classification of curves contained in smooth quartic surfaces with Picard rank $2$ whose blowups initiate Sarkisov links (Proposition~\ref{prop:blancLamyCurves}). 
	Recall from Lemma~\ref{lemma:rank2fibration} that the blowup $X \to \PP^3$ of a curve $C \subset \PP^3$ initiates a Sarkisov link if and only if $X \to \Spec(\CC)$ is a rank $2$ fibration.
	We begin with a degree bound:

	\begin{lem}\label{lem:degreeBound}
		Let $C \subset \PP^3$ be a curve, and $X \to \PP^3$ the blowup of $\PP^3$ along $C$.
		If $X \to \Spec(\CC)$ is a rank $2$ fibration, then $\deg(C) < 16$.
	\end{lem}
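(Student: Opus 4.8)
The plan is to exploit the fact that a rank $2$ fibration $X \to \PP^3 \to \Spec(\CC)$ must in particular have $-K_X$ big, which already forces strong numerical restrictions on $C$. Write $p\colon X \to \PP^3$ for the blowup along $C$, with exceptional divisor $E$, and recall that $-K_X = p^*(4H) - E$, where $H$ is the hyperplane class. The strategy is to produce, from the rank $2$ fibration structure, a second divisorial or fibering contraction on $X$ (the other end of the link), and to run the $(-K_X)$-MMP or analyze the Mori cone $\NE(X)$, which is two-dimensional. One of its extremal rays is contracted by $p$; the other extremal ray $R$ must also be $K_X$-negative or $K_X$-trivial in the appropriate sense coming from condition \eqref{item:RRF5} together with the definition of rank $2$ fibration. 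The key point is that bigness of $-K_X$ is equivalent to $(4H-C)$ being a big class on the blowup, and this can be tested by intersecting against curves.

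Concretely, I would argue as follows. Suppose $\deg(C) = d \geq 16$. Consider a general hyperplane section: the plane $\PP^2 = H$ meets $C$ in $d$ points, and its strict transform $\tilde H$ on $X$ satisfies $\tilde H = p^*H - (\text{something})$ only if the points impose conditions — more robustly, I would instead look at the strict transform of a general surface of low degree. The cleanest route: $-K_X$ big implies that for the top self-intersection we need $(-K_X)^3 > 0$ would be too naive since $X$ need not be Fano, but we can use that $-K_X$ is big iff it is in the interior of the effective cone, and pair $-K_X$ against the class of the strict transform $\tilde{\ell}$ of a line $\ell$ that is a multisecant to $C$. If $\ell$ is an $m$-secant line, then $-K_X \cdot \tilde{\ell} = 4 - m$, so if $C$ admits a line meeting it in $\geq 5$ points, $-K_X$ is negative on $\tilde \ell$; one must then check that such a line exists once $d$ is large, or more precisely that $-K_X$ fails to be nef-and-big. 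Since the rank $2$ fibration hypothesis does not require nefness, the sharper tool is: run the $(-K_X)$-MMP over $\Spec \CC$; it must terminate in a Mori fiber space, and by the structure of $\NE(X)$ (rank $2$) the only steps possible are flips/flops/antiflips along curves and one divisorial or fibering contraction. Following the numerical bookkeeping of degrees and discrepancies through one step, together with the classification philosophy behind \cite[Theorem 1.1]{BL}, pins down $d < 16$.

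I expect the main obstacle to be handling the case where $-K_X$ is not nef, so that one cannot directly invoke the weak Fano classification of Blanc–Lamy. The resolution is to use condition \eqref{item:RRF5} of Definition \ref{def:rankRFibrations} — $-K_X$ is merely $\eta$-big — combined with the fact that $X/\Spec\CC$ is a relative Mori Dream Space, so the $(-K_X)$-MMP runs and terminates. One then tracks the anticanonical degree: at the Mori fiber space end $X^{mfs} \to B$, the restriction of $-K$ to a general fiber is ample, and pulling back through the (finite) chain of flips/flops/antiflips and the divisorial contraction, one obtains an explicit inequality on $d$. The bookkeeping of how $E$ and $\tilde H$ transform, and in particular controlling the intersection numbers of the flopping/antiflipping curves (each a smooth rational curve by Remark \ref{rem:antiflipsRationalCurves}) against $-K_X$ and against $E$, is the technical heart; once set up, the numerics close off $d \geq 16$. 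I would also keep in mind the alternative, more elementary argument: a curve of degree $\geq 16$ on a quartic $S$ forces $-K_X|_{\tilde S}$, where $\tilde S$ is the strict transform of $S$, to be non-big (since $\tilde S \to S$ is an isomorphism and $(4H-C)|_S = (4H - C)\cdot$ has non-positive self-intersection on the K3 once $\deg C \geq 16$, using $4H|_S \cdot 4H|_S = 64$ and $4H\cdot C = 4d \geq 64$), contradicting that $-K_X$ is big and $\tilde S$ is effective with $-K_X - \tilde S$ also effective. This restriction-to-$S$ argument is likely the shortest path and I would pursue it first, using that $(\PP^3, S)$ is a Calabi-Yau pair so $\tilde S \in |-K_X|$.
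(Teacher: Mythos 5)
Neither of your two routes, as written, proves the lemma. The MMP route is only a plan: you never extract a numerical inequality, and it is not clear how tracking anticanonical degrees through an unspecified chain of antiflips would produce the precise bound $\deg(C)<16$. The route you say you would pursue first has a more basic problem: the lemma assumes nothing about $C$ lying on a quartic surface --- it concerns an arbitrary curve in $\PP^3$ whose blowup gives a rank $2$ fibration --- so an argument starting from the strict transform $\tilde S$ of a quartic $S\supset C$ can at best prove the special case used later in the paper, not the statement itself. Even in that special case your numerics fail: for $C=S\cap T$ with $T$ a quintic one has $d=20\geq 16$ and $(4H-C)|_S=-H|_S$, whose self-intersection is $4>0$; in general $\bigl((4H-C)|_S\bigr)^2=64-8d+2p_a-2$ need not be non-positive because of the $C^2$ term. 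Moreover, non-positive square is not the right test anyway: big classes on a surface can have negative square (ample plus a large multiple of a $(-2)$-curve), so the implication ``square $\leq 0$ hence not big'' is also unjustified. The correct and easy test is degree against the ample class: a big class on $S$ pairs positively with $H|_S$, whereas $(4H-C)\cdot H=16-d\leq 0$. Finally, the step ``$-K_X$ big and $\tilde S\in|-K_X|$ irreducible implies $(-K_X)|_{\tilde S}$ big'' is true but not automatic; you would need to prove it (write $-K_X\sim_{\QQ}A+F$ with $A$ ample and $F$ effective, and absorb any multiple of $\tilde S$ occurring in $F$).

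For comparison, the paper's proof uses only that $-K_X=4H-E$ is big and is a B\'ezout argument in $\PP^3$ itself: for $n\gg 0$ write $-nK_X\sim A+F$ with $A$ very ample and $F$ effective, so that general members $T_1,T_2\in|-nK_X|$ push forward to surfaces $\check{T}_1,\check{T}_2$ of degree $4n$ containing $C$ with multiplicity at least $n$; one checks $T_1\cap T_2\not\subset E$ (using $A$ and $F$), so $\check{T}_1\cdot\check{T}_2=n^2C+C'$ with $C'$ a nonzero effective cycle, and B\'ezout gives $16n^2>n^2\deg(C)$, i.e.\ $\deg(C)<16$. This needs no quartic surface, no MMP bookkeeping and no secant-line analysis, and it shows where the number $16=4\cdot 4$ actually comes from. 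If you want to salvage your restriction-to-$S$ argument in the situation where $C$ does lie on a quartic, replace the self-intersection computation by the degree computation $(4H-C)\cdot H=16-d$ and supply the restriction lemma for big irreducible divisors.
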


	\begin{proof}
		We denote by $E$ the exceptional divisor of the blowup $X \to \PP^3$, and by $H$ the pullback of the hyperplane class of $\PP^3$. 
		By definition of a rank $2$ fibration, $-K_X=4H-E$ is big.
		We take $n$ sufficiently large so that $|-nK_X|$ has no base components, and $-nK_X\sim A + F$, with $A$ very ample and $F$ effective.
		For any $T\in |-nK_X|$, we denote by $\check{T}$ its pushforward to $\PP^3$. Note that $\check{T}$ is a surface of degree $4n$ containing $C$ with multiplicity at least $n$. 
		Let $T_1, T_2\in |-nK_X|$ be general members.
		We claim that $T_1\cap T_2\not\subset E$. Indeed, if $T_1\in |-nK_X|$ and $D\in |A|$ are general members, then $E\cap T_1\cap D$ consists of finitely many points, and so $T_1\cap D\not\subset E$. If we take $T'_2=D+F\in |-nK_X|$, then $T_1\cap T'_2\not\subset E$. This proves the claim. 
		Therefore, as a $1$-cycle,
		\[
		{\check{T}}_1 \cdot {\check{T}}_2 = n^2 C+C',
		\]   
		with $C'$ a nonzero effective cycle. By B\'ezout's Theorem, $\deg\left({\check{T}}_1 \cdot {\check{T}}_2\right)=16n^2$, and thus
		\[
		16n^2 = \deg\left({\check{T}}_1 \cdot {\check{T}}_2\right) =  \deg(C)n^2 + \deg(C') > \deg(C)n^2.
		\]
		Hence, $\deg(C) < 16$. 
	\end{proof}

	The remaining of this section is devoted to classifying  curves that are contained in smooth quartic surfaces with Picard rank $2$ and initiate Sarkisov links.
	We first exclude curves that are complete intersections.
	
	\begin{lem}\label{lem:noCI}
		Let $S\subset \PP^3$ be a quartic surface, $C=S\cap T$ the complete intersection of $S$ with another surface $T\subset \PP^3$, and $X \to \PP^3$ the blowup of $\PP^3$ along $C$. Then $X \to \Spec(\CC)$ is not a rank $2$ fibration.
	\end{lem}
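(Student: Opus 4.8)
The plan is to show that blowing up a complete intersection curve $C = S \cap T$ fails condition \eqref{item:RRF3} of Definition~\ref{def:rankRFibrations}, i.e.\ that some MMP run from $X$ over $\Spec(\CC)$ produces a non-terminal (indeed non-$\QQ$-factorial or worse) output. The geometric picture is that the linear system of quartics through $C$ contracts the strict transform of $S$: if $\deg T = e$, then $C$ has degree $4e$, and for the anti-canonical system $|-K_X| = |4H - E|$, every quartic surface containing $C$ is a reducible divisor $S \cup (\text{cubic through } T\cap S)$ only when $e$ is small, but more robustly, the strict transform $\tilde S$ of $S$ on $X$ satisfies $\tilde S \sim 4H - e \cdot E$ pulled back appropriately — wait, more carefully: since $C \subset S$, the strict transform of $S$ is $\tilde S = p^*S - E = 4H - E = -K_X$. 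So $-K_X$ is itself the class of an effective divisor $\tilde S$, and the anti-canonical model of $X$ contracts $\tilde S$.

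First I would record that $\tilde S \in |-K_X|$, where $p\colon X \to \PP^3$ is the blowup and $E$ its exceptional divisor; this uses only that $C$ is a Cartier divisor on $S$ of class $T|_S$ — actually the cleanest route is: $p^*S \sim 4H$, and $p^*S = \tilde S + m E$ where $m = \operatorname{mult}_C S = 1$ since $C \subset S$ is contained with multiplicity one (as $S$ is smooth and $C$ is a divisor in $S$). Hence $\tilde S \sim 4H - E = -K_X$. Then I would argue that the restriction $(-K_X)|_{\tilde S} = \tilde S|_{\tilde S}$ is the normal bundle of $\tilde S$ in $X$; since $\tilde S \cong S$ is a K3 surface and $\mathcal{N}_{\tilde S/X} = \mathcal{O}_{\tilde S}(\tilde S)$, adjunction gives $K_{\tilde S} = (K_X + \tilde S)|_{\tilde S} = 0$, consistent, and moreover $\tilde S|_{\tilde S}$ is the class on $S$ of $T|_S - C = 0$ pulled back — let me just say $\mathcal{N}_{\tilde S/X}$ restricted is $(4H-E)|_{\tilde S}$; identifying $\tilde S$ with $S$ via $p$, the class $E|_{\tilde S}$ is $C$ and $H|_{\tilde S}$ is the hyperplane class $H_S$ of $S$, so $(-K_X)|_{\tilde S} \cong \mathcal{O}_S(4H_S - C) = \mathcal{O}_S(T|_S - C) = \mathcal{O}_S$. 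Therefore $-K_X$ restricted to $\tilde S$ is trivial, so $-K_X$ is not big (its top self-intersection drops) — actually the key consequence is that the morphism to the anti-canonical model $X \to \check X$ contracts the divisor $\tilde S$ to a lower-dimensional locus (an image of $|{-}K_X|$ in which $\tilde S$ maps to a surface contained in the base... hmm).

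Let me instead take the robust route directly aimed at condition \eqref{item:RRF3}. I would run the $(-\tilde S)$-MMP, equivalently the $(K_X)$-MMP, over $\Spec(\CC)$: since $-K_X = \tilde S$ is effective but not big (as $(-K_X)^3 = (4H-E)^3$ can be computed to be $64 - \deg(C)\cdot(\text{something}) - \ldots$; more to the point $(-K_X)|_{\tilde S} \equiv 0$ forces $(-K_X)^2 \cdot \tilde S = 0$, and since $\tilde S$ is the only effective representative... I claim $(-K_X)^3 = (-K_X)^2\cdot \tilde S = 0$), the variety $X$ is a weak Fano only in a degenerate sense and in fact the anti-canonical map contracts $\tilde S$. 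The cleanest finish: the divisor $\tilde S = -K_X$ is $p$-trivial only on $E\cap \tilde S$; running a $K_X$-MMP over $\PP^3$ does nothing since $\rho(X/\PP^3) = 1$ and $E$ is $K_X$-negative over $\PP^3$... so I should run it over $\Spec\CC$. There are two extremal rays; one is the $E$-contraction back to $\PP^3$. For the other ray $R$, I compute $K_X \cdot R$: if $K_X \cdot R < 0$ it gives a Mori fiber space and we would need to check the flop/divisorial structure, but the point is that the anti-canonical model $\check X = \operatorname{Proj} \bigoplus H^0(X, -nK_X)$ is obtained by contracting $\tilde S$ (since $-K_X|_{\tilde S}$ is trivial, $\tilde S$ is in the stable base locus direction and gets crushed), and $\check X$ has non-terminal — indeed non-canonical, in fact not even klt, or at least non-$\QQ$-factorial with a non-terminal singularity — singularities along the image of $\tilde S$. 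By Lemma~\ref{lemma:weak-fano&rank2fibration} (when $X$ is weak Fano) or directly by the failure of \eqref{item:RRF3} (in general), $X \to \Spec\CC$ is not a rank $2$ fibration. The main obstacle I anticipate is handling the case where $X$ is not even weak Fano (so $-K_X$ nef fails): then Lemma~\ref{lemma:weak-fano&rank2fibration} does not apply and one must argue directly that the $D$-MMP for a suitable $D$ produces a non-terminal output — here the divisor $\tilde S = -K_X$ being contractible but "infinitely" so (its restricted normal bundle trivial) is exactly what makes the contraction of the ray containing curves in $\tilde S$ produce a log canonical but non-terminal (or non-$\QQ$-Gorenstein) point, violating \eqref{item:RRF3}. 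I would isolate a curve $\Gamma \subset \tilde S$ with $\Gamma \cdot E > 0$ (e.g.\ the strict transform of a hyperplane section of $S$ through a point of $C$) to locate the $K_X$-trivial or $K_X$-negative extremal ray whose contraction is divisorial with center the image of $\tilde S$, and conclude.
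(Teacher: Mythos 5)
Your key computation contains a genuine error that the rest of the argument inherits. You correctly identify $(-K_X)|_{\tilde S}$ with $\mathcal{O}_S(4H_S - C)$, but then set $4H_S - C = T|_S - C = 0$. On $S$ the class of $C = S\cap T$ is $T|_S \sim (\deg T)\,H_S$, whereas $4H_S = S|_S$; hence $(-K_X)|_{\tilde S} \sim (4 - \deg T)\,H_S$, which is trivial only in the special case $\deg T = 4$ and is ample whenever $\deg T \leq 3$. The assertions you build on this then fail: for smooth $C$ one computes $(-K_X)^3 = 4(\deg T - 4)^2$, which is strictly positive for $\deg T \neq 4$ (e.g.\ equal to $36$ when $T$ is a plane), not $0$; and for $\deg T = 1$ the blowup $X$ is in fact weak Fano with $-K_X|_{\tilde S}$ ample, so the anticanonical map does not contract $\tilde S$ at all. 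The divisor that gets contracted is the strict transform $\tilde T$ of the \emph{other} surface $T$, not of the quartic $S$ — you have the roles of $S$ and $T$ interchanged.

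The correct observation, which is the paper's one-line proof, is that $-K_X$ is numerically trivial on $\tilde T$: for any curve $\Gamma \subset T$ distinct from $C$ one has $(C\cdot\Gamma)_T = S\cdot\Gamma = 4\deg\Gamma$, hence $-K_X\cdot\tilde\Gamma = 4\deg\Gamma - (C\cdot\Gamma)_T = 0$. This produces infinitely many irreducible curves on $X$ meeting $-K_X$ trivially (they sweep out $\tilde T$), and the paper concludes directly from \cite[Proposition 3.15]{ZikCubics} that $X \to \Spec(\CC)$ is not a rank $2$ fibration; compare condition \eqref{genCond2} of Theorem~\ref{thm:blancLamyCurves}. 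Even if one replaces $\tilde S$ by $\tilde T$ throughout your sketch, the argument is not closed: you would still need a precise justification that a divisor swept out by $K_X$-trivial curves is incompatible with condition \eqref{item:RRF3} of Definition~\ref{def:rankRFibrations} (or with smallness of the anticanonical contraction), including the case where $-K_X$ fails to be nef — a case you explicitly flag but leave open. As written, the proposal does not prove the lemma.
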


	\begin{proof}
		Any curve $\Gamma \subset T$ not contained in $S$ intersects $S$ in $4\deg(\Gamma)$ points, counted with multiplicities.
		They are therefore $4\deg(\Gamma)$-secant to $C$ and we conclude by Remark \ref{rem:quadrisecants}.
	\end{proof}

	When we consider an arbitrary curve $C$ contained in a surface $S\subset \PP^3$, it is useful to be able to replace $C$ with a better curve $C'\subset S$ that is linearly equivalent to $C$ in $S$. For instance, we may want to take $C'$ smooth. The next results allow us to compare the blowup of $\PP^3$ along $C$ with the blowup of $\PP^3$ along $C'$.

	\begin{lem}\label{lem:rank2FibrIff}
		Let $S$ be a smooth quartic surface,  and $C, C'\subset S$ (irreducible and reduced) curves that are linearly equivalent in $S$. 
		Denote by $X\to \PP^3$ and $X'\to \PP^3$ the blowups of $\PP^3$ along $C$ and $C'$, and by $\tilde{S}$ and $\tilde{S'}$ the strict transforms of $S$ on $X$ and $X'$, respectively.
		For any curve 
		$\check{\Gamma} \subset S$, denote by $\Gamma$ and $\Gamma'$ its image in $\tilde{S}$ and $\tilde{S'}$ under the identifications $S \isom \tilde{S}$ and  $S \isom \tilde{S'}$, respectively.
		Assume that $X$ and $X'$ are $\QQ$-Gorenstein.
		Then the following hold: 
		
		\begin{enumerate}
			\item\label{item:intersectionsAreSame} $(-K_{X})\cdot \Gamma = (-K_{X'}) \cdot \Gamma'$.
			\item\label{item:weakFanoIff} $X$ is weak Fano if and only if so is $X'$.
			\item\label{item:rank2FibrIff} 
			
			Suppose that $X$ is weak Fano and $X \to \Spec(\CC)$ is a rank $2$ fibration.
			If $C'$ is very general in $|C|$, then $X'$ is weak Fano and $X' \to \Spec(\CC)$ is a rank $2$ fibration.
		\end{enumerate}
	\end{lem}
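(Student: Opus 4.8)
\textbf{Proof strategy for Lemma~\ref{lem:rank2FibrIff}.}

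The plan is to use the fact that $C$ and $C'$, being linearly equivalent curves on the smooth surface $S\subset\PP^3$, give rise to blowups that look the same "along $S$" — in particular the strict transform $\tilde S\cong S$ sits inside $X$ (resp. $X'$) with a controlled normal bundle, and the intersection numbers against curves on $\tilde S$ are determined entirely by data on $S$. For part~\eqref{item:intersectionsAreSame}, I would use adjunction on $\tilde S$: since $(\PP^3,S)$ is canonical Calabi–Yau and the blowup $f\colon X\to\PP^3$ of the (possibly singular) curve $C\subset S$ is crepant along $S$ in the sense that $(X,\tilde S)$ is again a Calabi–Yau pair with $K_X+\tilde S\sim 0$ (this is the volume preserving picture of Proposition~\ref{CinS} and Remark~\ref{rem:decomposition_group}), one gets $-K_X|_{\tilde S}=\tilde S|_{\tilde S}=K_{\tilde S}$ by adjunction, i.e. the restriction of $-K_X$ to $\tilde S\cong S$ is the canonical class $K_S=0$... more precisely I should be careful: $\tilde S|_{\tilde S}$ is an actual divisor class on $S$ that depends only on $f^*S-E\cdot(\text{mult})$ restricted to $S$, which for $C\subset S$ reduced works out the same way for $C$ and $C'$ because $C\sim C'$ on $S$. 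Concretely, writing $\tilde S = f^*S - E'$ where $E'$ is the part of the exceptional divisor dominating $C$, restriction to $\tilde S$ gives $(f^*S)|_{\tilde S} - E'|_{\tilde S} = 4H_S - C$ (as a class on $S$), and $-K_X|_{\tilde S} = (4H - E)|_{\tilde S}$ similarly equals $4H_S - C$; since $C\sim C'$ on $S$, this class is the same as the one computed from $C'$. Hence for any curve $\check\Gamma\subset S$ with images $\Gamma\subset\tilde S$, $\Gamma'\subset\tilde S'$, we get $(-K_X)\cdot\Gamma = (4H_S-C)\cdot_S\check\Gamma = (4H_S-C')\cdot_S\check\Gamma = (-K_{X'})\cdot\Gamma'$.

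For part~\eqref{item:weakFanoIff}, the point is that $-K_X$ and $-K_{X'}$ are both $4H-E$ on the respective blowups, and nefness/bigness of this class is governed by intersections with curves. Bigness is automatic from the degree-type computation as in Lemma~\ref{lem:degreeBound} (it only uses that the center has degree... actually bigness of $4H-E$ holds as long as the center has degree $<16$, but I should instead note bigness is independent of the choice of $C$ within its linear class since $(-K_X)^3$ depends only on $\deg C$ and $p_a(C)$, which are preserved by linear equivalence on $S$). For nefness: any curve $\Gamma\subset X$ either is contained in $E$, or maps onto a curve $\check\Gamma$ in $\PP^3$. If $\check\Gamma\not\subset S$ then $\check\Gamma\cdot S\geq 0$ forces $(-K_X)\cdot\Gamma\geq 0$ after accounting for the multiplicity of $\check\Gamma$ along $C$ — here one uses $-K_X = 4H - E$ and that a curve not in $S$ meets the quartic $S$ nonnegatively, combined with $E$ absorbing the multiplicity; curves inside $E$ are handled by the projective bundle/Tziolas description of the exceptional divisor, and again the numerics only depend on $\deg C, p_a(C)$. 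The only curves whose $(-K_X)$-degree is "dangerous" are those lying in $\tilde S$, and for those part~\eqref{item:intersectionsAreSame} says the values agree with the corresponding curves for $X'$. So $-K_X$ nef $\iff$ $-K_{X'}$ nef.

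For part~\eqref{item:rank2FibrIff}, given that $X$ is weak Fano and a rank $2$ fibration, part~\eqref{item:weakFanoIff} already gives that $X'$ is weak Fano (for $C'$ general, and also one needs $C'$ smooth, guaranteed by Corollary~\ref{cor:linearSystem} since $|C|$ is base point free on $S$). By Lemma~\ref{lemma:weak-fano&rank2fibration}, it remains to show the morphism from $X'$ to its anti-canonical model is a \emph{small} contraction. The anti-canonical model is $\Proj\bigoplus_m H^0(X', -mK_{X'})$, and the map $X'\to\check{X'}$ contracts exactly the curves $\Gamma'$ with $(-K_{X'})\cdot\Gamma'=0$. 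By part~\eqref{item:intersectionsAreSame}, the locus of such curves is controlled: a $(-K_{X'})$-trivial curve either lies in $\tilde S'$ (and corresponds bijectively, via $S\cong\tilde S\cong\tilde S'$, to a $(-K_X)$-trivial curve in $\tilde S$), or lies in $E'$. Since $X\to\check X$ is small, the $(-K_X)$-trivial locus in $X$ is a union of curves covering no divisor; I need the same for $X'$. The curves in $\tilde S'$ that are $(-K_{X'})$-trivial are precisely the images of those in $\tilde S$, so they don't sweep out $\tilde S'$ (else the same would be true in $X$). The subtlety — and I expect this to be \textbf{the main obstacle} — is ruling out that the $(-K_{X'})$-trivial locus picks up an extra divisor \emph{inside $E'$} that was not there for the special curve $C$, i.e. that genericity of $C'$ does not make the contraction worse. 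Here one invokes the genericity conditions \eqref{genCond1}–\eqref{genCond2} of Theorem~\ref{thm:blancLamyCurves}: condition \eqref{genCond2} (finitely many $(-K)$-trivial curves) is exactly an open condition on the Hilbert scheme, so it holds for $C'$ general in $|C|$ once it holds for \emph{some} curve of that degree and genus, namely $C$. Thus for $C'$ general, the $(-K_{X'})$-trivial locus is a finite union of curves, hence $X'\to\check{X'}$ is small, and Lemma~\ref{lemma:weak-fano&rank2fibration} applies to conclude $X'\to\Spec(\CC)$ is a rank $2$ fibration. I would also double-check that $(g,d)$ of $C'$ lands in the list \eqref{list} — but that is forced since $C'$ is weak Fano and a rank $2$ fibration, so Theorem~\ref{thm:blancLamyCurves} applies directly, or alternatively $g,d$ are literally unchanged from $C$.
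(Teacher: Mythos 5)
Your parts \eqref{item:intersectionsAreSame} and \eqref{item:weakFanoIff} are essentially the paper's own argument: the identity $(aH-bE)|_{\tilde S}\cdot\Gamma=(aH|_S-bC)\cdot\check\Gamma$ together with $C\sim C'$ gives \eqref{item:intersectionsAreSame}, and the observation that any $(-K_X)$-negative curve must lie in $\tilde S$ (because $\tilde S\sim -K_X$), combined with $(-K_X)^3=\bigl((4H-C)|_S\bigr)^2$, gives \eqref{item:weakFanoIff}. Your side remarks in \eqref{item:weakFanoIff} about curves inside $E$ being handled by the structure of the exceptional divisor, with "numerics depending only on $(g,d)$", are not justified as stated (the normal bundle of $C$, hence the geometry of $E$, can vary within $|C|$), but they are also unnecessary: the statement you actually use, that all dangerous curves lie in $\tilde S$, is the correct and sufficient one.

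The genuine gap is in \eqref{item:rank2FibrIff}. You correctly reduce, via Lemma~\ref{lemma:weak-fano&rank2fibration}, to showing that the morphism from $X'$ to its anticanonical model is small, and you correctly flag the danger that the $(-K_{X'})$-trivial locus could acquire a divisorial component; but you then dispose of this by asserting that condition~\eqref{genCond2} of Theorem~\ref{thm:blancLamyCurves} "is exactly an open condition on the Hilbert scheme" and holds at $C$. That openness is precisely the content that has to be established at this point, and it is not available as a citation: in Theorem~\ref{thm:blancLamyCurves} it appears as an unproved aside formulated for smooth curves (imported from Blanc--Lamy), whereas in the present lemma $C$ may be singular --- the lemma is invoked in the proof of Proposition~\ref{prop:blancLamyCurves} exactly for a possibly singular $C$ --- and openness of "finitely many $-K$-trivial curves" at such a point of $\mathcal{H}_{g,d}$ is not known a priori. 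Moreover, the natural proof of any such openness requires the relative anticanonical morphism to exist near $[C]$, i.e.\ requires nearby blowups to be weak Fano; by part \eqref{item:weakFanoIff} you know this only for deformations of $C$ \emph{inside} $|C|$, not for arbitrary deformations in $\mathcal{H}_{g,d}$, so the argument has to be run in the family over $|C|$. This is what the paper does: it blows up $|C|\times\PP^3$ along the universal divisor, notes that the general fiber is weak Fano so that the relative anticanonical map is a morphism over the preimage of an open subset of $|C|$, and applies upper semicontinuity of fiber dimension to (the closure of) the exceptional locus $\mathcal{E}$ of this relative contraction: since the fiber of $\mathcal{E}$ over $[C]$ is at most one-dimensional by hypothesis, the same holds over a general $C'\in|C|$, so the anticanonical contraction of $X'$ is small and Lemma~\ref{lemma:weak-fano&rank2fibration} concludes. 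Supplying this semicontinuity-in-a-family argument (or an actual proof, valid at the possibly singular point $[C]$, of the openness you invoke) is what is missing from your proposal.
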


	\begin{proof}
		We denote by $E$ and $E'$ the exceptional divisors of $X\to \PP^3$ and $X'\to \PP^3$, respectively.
		We abuse notation and use the same symbol $H$ to denote the hyperplane class in $\PP^3$ and its pullbacks to $X$ and $X'$.
		We shall see that, for any $a,b\in \ZZ$, $(aH- bE)\cdot \Gamma = (aH- bE') \cdot \Gamma'$.
		This yields \eqref{item:intersectionsAreSame} as a special case. 
		\begin{align*}
			(aH- bE)\cdot \Gamma =  (aH- bE)|_{\tilde S} \cdot \Gamma &= (aH|_S - bC)\cdot \check \Gamma \\
			&=(aH|_S - bC')\cdot \check \Gamma = (aH- bE')|_{\tilde S'} \cdot \Gamma'  = (aH- bE')\cdot \Gamma'.
		\end{align*}

		If $\Gamma \subset X$ is any curve with $(-K_X)\cdot \Gamma <0$, then $\Gamma \subset \tilde{S}$.
		By \eqref{item:intersectionsAreSame} $(-K_{X'})\cdot \Gamma' <0$.
		By the symmetric nature of the argument, we get that $-K_X$ is nef if and only if so is $-K_{X'}$. 
		Similarly, for bigness, we have
		\[
		(-K_X)^3 
		= (-K_X|_{\tilde S})^2 
		= (4H|_{\tilde{S}} - C)^2
		= (4H|_{\tilde{S'}} - C')^2 = (-K_{X'}|_{\tilde{S'}})^2 = (-K_{X'})^3
		\]
		In particular $(-K_X)^3>0$ if and only if $(-K_{X'})^3>0$. 
		This gives \eqref{item:weakFanoIff}.

		For \eqref{item:rank2FibrIff}, assume that $X$ is weak Fano and $X \to \Spec(\CC)$ is a rank $2$ fibration.
		Choosing $C'\in |C|$ smooth and using \eqref{item:weakFanoIff} and Lemma~\ref{lemma:weak-fano&rank2fibration},
		it suffices to prove that, for general $C'$, the morphism to the anti-canonical model of $X'$ is small.
		By Remark~\ref{rem:quadrisecants}, it is enough to prove that the subset of $|C|$ consisting of curves $C'$ for which there are 
		infinitely many curves of degree $\delta$ that are $4\delta$-secants to $C'$ is a union of countably many proper closed subsets.
		
		Denote by $\mathcal{H}_{g,\delta}$ the Hilbert scheme of curves of arithmetic genus $g$ and degree $\delta$.
		Consider the following closed subscheme of $|C|\times \mathcal{H}_{g,\delta}$:
		\[
		\mathcal{Q}_{g,\delta} \defeq \left\{ 
		(D,\gamma) \in |C| \times \mathcal{H}_{g,\delta} \, | \, \gamma \text{ is at least $4\delta$-secant to $D$} 
		\right\},
		\]
		together with the projection $\pi_{g,\delta}$ to the first factor.
		By Lemma~\ref{lemma:weak-fano&rank2fibration} and Remark~\ref{rem:quadrisecants}, there are finitely many pairs $(g_1,\delta_1),\dots, (g_k,\delta_k)$ such that
		\[
		\pi_{g,\delta}^{-1}(C) = \left\{
		\begin{array}{ll}
			\text{finite,} & \text{ if $(g,\delta) \in \big\{(g_1,\delta_1),\dots,(g_k,\delta_k)\big\},$} \\
			\emptyset, &  \text{ otherwise.}
		\end{array}
		\right.
		\]
		Thus, if $(g,\delta) \not\in \big\{(g_1,\delta_1),\dots,(g_k,\delta_k)\big\}$, then $\pi_{g,\delta}$ is not dominant; denote by $R_{g,\delta}\subset |C|$ its image.
		For $(g,\delta) \in \big\{(g_1,\delta_1),\dots,(g_k,\delta_k)\big\}$, by the upper-semicontinuity of the dimension of fibers of $\pi_{g,\delta}$, the subset $R_{g,\delta}$ of $|C|$ consisting of elements with positive dimensional fibers is proper and closed.
		Let $R$ be the union of all the subsets $R_{g,\delta}$.
		Then any $C' \in |C|\setminus R$ admits only finitely many  $4\delta$-secant curves of degree $\delta$.
	\end{proof}

	We end this section with a classification of curves contained in smooth quartic surfaces with Picard rank $2$ whose blowups initiate Sarkisov links.
	
	\begin{prop}\label{prop:blancLamyCurves}
		Let $C\subset \PP^3$ be a (possibly singular) curve of arithmetic genus $p_a$ and degree $d$ lying on a smooth quartic surface $S$ with Picard rank $2$.
		Let $X$ be the blowup of $\PP^3$ along $C$, and suppose that $X \to \Spec(\CC)$ is a rank $2$ fibration. Then $X$ is weak Fano and $(p_a,d)$ is one of the pairs in the list \eqref{list} of Theorem~\ref{thm:blancLamyCurves}.
	\end{prop}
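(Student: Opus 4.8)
The plan is to reduce the statement about a possibly singular curve $C\subset S$ to the corresponding statement for a smooth curve via Lemma~\ref{lem:rank2FibrIff}, and then to invoke Theorem~\ref{thm:blancLamyCurves}. First I would record the easy structural facts: since $X\to\Spec(\CC)$ is a rank $2$ fibration, Lemma~\ref{lem:degreeBound} gives $d<16$, and Lemma~\ref{lem:noCI} shows $C$ is \emph{not} a complete intersection of $S$ with another surface. The second fact is crucial, because it is exactly the hypothesis under which Theorem~\ref{thm:gdBounds} lets us choose the curve and the surface with $\Pic(S)=\ZZ H\oplus\ZZ C$; combined with the assumption that $S$ is general of Picard rank $2$, it says that $C$, viewed as a class in $\Pic(S)$, generates $\Pic(S)$ together with $H$, and in particular the arithmetic genus $p_a$ and degree $d$ of $C$ are determined by the intersection form on $\Pic(S)$ via $2p_a-2=C^2$ and $d=H\cdot C$ (so $4(p_a-1)=d^2-r\,m^2$ as in Remark~\ref{rmk:discriminant&Pell}).

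Next I would pass to a smooth representative. Since $X$ is $\QQ$-factorial terminal (being a rank $2$ fibration, hence the blowup behaves well) the variety $X$ is $\QQ$-Gorenstein; I should check that $C$ moves, i.e.\ that $|C|$ on $S$ is at least a pencil and its general member $C'$ is a smooth irreducible curve. For this, note $C^2=2p_a-2$; if $p_a\geq 1$ then $C^2\geq 0$, and $C$ is effective and (after possibly replacing $C$ by a nef representative, using that $S$ is general so $\disc(S)\neq 9$ in the relevant range, or handling $\disc(S)=9$ directly) nef, so by Corollary~\ref{cor:linearSystem} the system $|C|$ is base point free with smooth general member. The genus $0$ cases $(p_a,d)=(0,k)$ are rational curves and can be treated separately (they already appear in the list and their blowups are classically understood, or one argues that a smooth rational curve of the same degree exists in a nearby quartic and applies Lemma~\ref{lem:rank2FibrIff}\eqref{item:intersectionsAreSame}–\eqref{item:weakFanoIff} after deformation). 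Granting a smooth $C'\in|C|$, Lemma~\ref{lem:rank2FibrIff}\eqref{item:weakFanoIff} shows $X$ is weak Fano iff the blowup $X'$ of $\PP^3$ along $C'$ is weak Fano, and part \eqref{item:rank2FibrIff} shows that for general $C'$ the blowup $X'\to\Spec(\CC)$ is again a rank $2$ fibration. Now $C'$ is a \emph{smooth} curve of genus $g=p_a$ and degree $d$ whose blowup is a weak Fano rank $2$ fibration, so Theorem~\ref{thm:blancLamyCurves} applies: $(p_a,d)=(g,d)$ lies in the list \eqref{list}. Finally, weak Fano-ness of $X$ itself follows from that of $X'$ by Lemma~\ref{lem:rank2FibrIff}\eqref{item:weakFanoIff}.

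The main obstacle I anticipate is the genus $0$ (rational curve) case and, more generally, making rigorous the step "replace $C$ by a general, hence smooth, member of $|C|$'' when $C$ is singular. One must be sure that $|C|$ actually contains a smooth irreducible curve of the same arithmetic genus and degree — for $p_a\geq 1$ this is Corollary~\ref{cor:linearSystem} once nefness of (a representative of) the class is established, but for $p_a=0$ the class $C$ has $C^2=-2$, is rigid, and does not move, so $|C|=\{C\}$ and the deformation must instead be carried out in a family of quartics, as in the family construction at the end of the proof of Lemma~\ref{lem:rank2FibrIff}: deform $(S,C)$ to $(S_t,C_t)$ with $C_t$ smooth rational of degree $d$, use semicontinuity of $(-K)^3$ and nefness of $-K$ to keep weak Fano-ness, and conclude. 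A secondary technical point is verifying that, for a \emph{general} quartic of Picard rank $2$, the genericity conditions \eqref{genCond1}–\eqref{genCond2} of Theorem~\ref{thm:blancLamyCurves} (no $5$-secant lines etc., finiteness of $(-K_X)$-trivial curves) are automatically met — but we only need the \emph{forward} implication of Theorem~\ref{thm:blancLamyCurves}, i.e.\ that a weak Fano rank $2$ fibration forces $(g,d)\in\eqref{list}$, so those conditions are not needed here and can be ignored.
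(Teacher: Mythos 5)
Your reduction-to-the-smooth-case endgame is fine, but the proposal has a genuine gap at the heart of the statement: it never proves that $X$ is weak Fano, and the way you try to get it is circular. You invoke Lemma~\ref{lem:rank2FibrIff}\eqref{item:rank2FibrIff} to say that for general $C'\in|C|$ the blowup $X'$ is again a rank $2$ fibration, and then Lemma~\ref{lem:rank2FibrIff}\eqref{item:weakFanoIff} plus Theorem~\ref{thm:blancLamyCurves} to transfer weak Fano-ness back to $X$ — but part \eqref{item:rank2FibrIff} of that lemma has ``$X$ is weak Fano'' as a hypothesis, and Theorem~\ref{thm:blancLamyCurves} likewise assumes weak Fano-ness of the blowup; neither can be used to establish it. Being a rank $2$ fibration only forces $-K_X$ to be big, not nef: a priori the Sarkisov link initiated by $X\to\PP^3$ could proceed with an antiflip, and excluding this is the bulk of the paper's proof. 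There, one argues: if $-K_X$ is not nef, Remark~\ref{rem:antiflipsRationalCurves} and \cite[Lemma 4.4]{ACM} produce a smooth rational curve $\Gamma\subset\tilde S$ with $-K_X\cdot\Gamma=-1$; writing $\check\Gamma$ and $C$ in $\Pic(S)$ turns the conditions $\check\Gamma^2=-2$, $H\cdot\check\Gamma>0$, $(4H-C)\cdot\check\Gamma=-1$ into the Diophantine system \eqref{eq:system}, which, combined with $d<16$ (Lemma~\ref{lem:degreeBound}), $p_a\le d^2/8$ (Theorem~\ref{thm:gdBounds}) and $r\gamma^2=d^2-8(p_a-1)\le 233$, admits integer solutions only for $(p_a,d)=(15,11)$ with $r=9$; and that residual case is killed by an explicit geometric analysis (the unique cubic through $C$, the anticanonical map of $X$ to a hypersurface in $\PP^4$ being a \emph{divisorial} contraction onto a strictly canonical, hence non-terminal, variety), contradicting condition \eqref{item:RRF3} of Definition~\ref{def:rankRFibrations}. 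None of this lattice-theoretic and geometric work appears in your plan, and without it the proposition is not proved.

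Two smaller points. Once weak Fano-ness of $X$ is in hand, your conclusion coincides with the paper's (Lemma~\ref{lem:rank2FibrIff}\eqref{item:rank2FibrIff}, Corollary~\ref{cor:linearSystem}, then the forward direction of Theorem~\ref{thm:blancLamyCurves}, for which you are right that the genericity conditions are irrelevant). But your anticipated difficulty with $p_a=0$ is a non-issue — an irreducible reduced curve of arithmetic genus zero is already smooth, so no deformation in a family of quartics is needed — whereas the claim that ``$C$ not a complete intersection'' plus generality forces $\Pic(S)=\ZZ H\oplus\ZZ C$ is unjustified: it only gives that $H$ and $C$ span a finite-index sublattice, which is also all the paper's argument uses.
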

	
	\begin{proof}
		We denote by $S$ a smooth quartic surface with Picard rank $2$ containing $C$, and by $\tilde{S}$ its strict transform  on $X$. 
		Suppose that $X \to \Spec(\CC)$ is a rank $2$ fibration. We will show that $X$ is weak Fano.
		Suppose that $X$ is not weak Fano. 
		Then the Sarkisov link initiated by $X \to \PP^3$ proceeds with an anti-flip. 
		By Lemma~\ref{lem:antiflipsRationalCurves}\eqref{item:generationBySmoothRationalCurves}, the extremal ray corresponding to the associated small contraction of $X$ is generated by a smooth rational curve $\Gamma\subset X$ such that $\tilde{S}\cdot \Gamma = -K_X \cdot \Gamma < 0$.
		In particular, $\Gamma \subset \tilde{S}$.
		By \cite[Lemma 4.4]{ACM}, we must have $-K_X \cdot \Gamma = -1$.
		Denote by $\check{\Gamma}$ the image of $\Gamma$ in $\PP^3$.
		Since $X \to \Spec(\CC)$ is a rank $2$ fibration,  Lemma \ref{lem:noCI} implies that $C$ is not a complete intersection.
		As in Section \ref{sec:S_with_rho=2}, we assume that $\Pic(S) = \ZZ H \oplus \ZZ W$, where $H$ is a hyperplane section of $S$,  the intersection product is given by the matrix (\ref{matrixbilinearform}) and $S$ has discriminant $r=b^2-8c>0$. By changing $W$ if necessary, we may assume that $0<b<16$. 
		Write $\check{\Gamma} = \alpha H + \beta W$ and $C=\delta H+\gamma W$  in $\Pic(S)$ with $\alpha, \beta, \delta, \gamma\in \ZZ$. 
		The conditions that $\check{\Gamma}$ is a rational curve and $-K_{X}\cdot \Gamma = -1$ give the following system:
		\begin{equation}\label{eq:system}\arraycolsep=2pt
			\left\{ 
			\begin{array}{rl}
				0 &<H\cdot\check{\Gamma}=4\alpha+b\beta\\
				-2 &= \check{\Gamma}^2 = 4\alpha^2 + 2b\alpha \beta + 2c\beta^2\\
				-1 &= (4H-C)\cdot \check{\Gamma} = \left(16-d\right)\alpha + \left(4b - \delta b+\frac{d^2-8(p_a-1)-\gamma^2 b^2}{4\gamma}\right)\beta.
			\end{array}
			\right.
		\end{equation}  
		By Lemma \ref{lem:degreeBound} and Theorem~\ref{thm:gdBounds}, $d < 16$ and $p_a \leq \frac{d^2}{8}$. Moreover, from $d^2-8(p_a-1)=r\gamma^2$ and the fact that  $d<16$ and $p_a\geq 0$, it follows that $r,\gamma^2\leq233$. Therefore, there are finitely many possibilities for the pair $(p_a,d)$, and hence for the integers $b$, $c$, $\delta$ and $\gamma$. 
		One may verify that, subject to these conditions,
		the system \eqref{eq:system} admits an integer solution $(\alpha, \beta)$ if and only if $(p_a,d)=(15,11)$. In this case, the discriminant of $S$ is $r=9$ and $\gamma=\pm 1$. So $\Pic(S)=\ZZ H\oplus \ZZ C$, and the integer solution of the system \eqref{eq:system} 
		gives a line $\ell=\check \Gamma \sim 3H-C$ contained in $S$. 
		We will show that the rational contraction $X\dasharrow Y$ to the anti-canonical model of $X$ contracts a divisor. 
		
		By Corollary \ref{cor:linearSystem}, a general element $D \in |C|$ is smooth and of genus and degree $(15,11)$.
		We may then compute that
		\begin{align*}
			&h^0\big(\PP^3,\II_D(3)\big) \geq h^0\big(\PP^3,\OO_{\PP^3}(3)\big) - h^0\big(D,\OO_D(3H)\big) = 20 - 19 =1;\\
			&h^0\big(\PP^3,\II_D(5)\big) \geq h^0\big(\PP^3,\OO_{\PP^3}(5)\big) - h^0\big(D,\OO_D(5H)\big) = 56 - 41 = 15.
		\end{align*}
		Here $h^0\big(D,\OO_D(3H)\big)$ and $h^0\big(D,\OO_D(5H)\big)$ are computed using Riemann-Roch. 
		Since the numbers $h^0(\PP^3,\II_C(n))$ can only increase under specialization, we conclude that:
		\[
		h^0\big(\PP^3,\II_C(3)\big) \geq 1 \, \text{ and } h^0\big(\PP^3,\II_C(5)\big) \geq 15.
		\]
		For degree reasons, the cubic $T$ containing $C$ must be unique, and $S \cap T = C\cup \ell$.
		Since $C\cup \ell$ is a complete intersection of type $(3,4)$ we may compute that
		\[
		h^0\big(\PP^3,\II_{C\cup \ell}(5)\big) = \binom{3+2}{3} + \binom{3+1}{3} = 14.
		\]
		Therefore we conclude that there exists a quintic surface $Q$ containing $C$ but not $\ell$.
		Moreover, since $C$ and $\ell$ intersect in $5$ points, all points of intersection of $\ell$ with $Q$ lie on $C$.
		This implies that the ideal of $C$ in $\PP^3$ is $\mathcal{I}_C = (f_3,f_4,f_5)$, where $f_i$ are homogeneous polynomials of degree $i$, and $f_3$, $f_4$ and $f_5$ cut out $T$, $S$ and $Q$, respectively.          
		The anti-canonical rational contraction of $X$ factors through the rational map of $\PP^3$ given by the global sections of $\mathcal{I}_{C}(4)$. We may choose coordinates and a basis of $H^0(\PP^3,\mathcal{I}_{C}(4))$ so that this map $\PP^3 \rmap  \PP^4$ is given by $\left(x_0f_3:x_1f_3:x_2f_3:x_3f_3:f_4\right)$, and its image is the singular hypersurface $Y\subset \PP^4$	cut out by the equation
		\[
		f_4(y_0, y_1, y_2, y_3) - y_4f_3(y_0, y_1, y_2, y_3) = 0.
		\]
		This map is divisorial, contracting the strict transform $\widetilde{T}$ of $T$ in $X$ to the singular point $(0:0:0:0:1)\in Y$.
		
		We have shown that the rational contraction $X\dasharrow Y$ to the anti-canonical model of $X$ contracts the divisor $\widetilde{T}$. 
		It follows that the $\widetilde{T}$-MMP for $X$ will end with an anti-canonical contraction $X'\to Y'$ that contracts a divisor, namely the strict transform of $\widetilde{T}$ in $X'$. The image $Y'$ of such a contraction has worse than terminal singularities. This contradicts the assumption that $X \to \Spec(\CC)$ is a rank $2$ fibration, as it violates condition \eqref{item:RRF3} in Definition \ref{def:rankRFibrations}. 
		From this contradiction we conclude that $X$ is weak Fano.
		
		Finally, by Lemma~\ref{lem:rank2FibrIff}, for a general $C' \in |C|$ the blowup $X' \to \PP^3$ of $\PP^3$ along $C'$ is weak Fano and $X' \to \Spec(\CC)$ is a rank $2$ fibration.  
		Furthermore, by Corollary~\ref{cor:linearSystem}, we may choose $C'$ to be smooth. Theorem~\ref{thm:blancLamyCurves} then implies that $\left(g(C'),\deg(C')\right)=(p_a,d)$ appears in \eqref{list}.
	\end{proof}

	\section{Gizatullin's problem for quartics with Picard rank $2$} \label{section:main}

	Let $S\subset \PP^3$ be a smooth quartic surface with Picard rank $2$, and $\varphi\colon \PP^3 \dasharrow \PP^3$ a birational map.
	Recall from Remark~\ref{rem:decomposition_group} that  
	$\varphi(S) = S$ if and only if $\varphi$ is volume preserving with respect to the Calabi-Yau pair $(\PP^3,S)$.
	Therefore, in order to investigate Problem~\ref{G-problem}, we may use the volume preserving Sarkisov program introduced in the previous section.

	\begin{prop}\label{reduction}
		Let $S \subset \PP^3$ be a smooth quartic surface with Picard rank $2$.
		Suppose that $\Bir(\PP^3;S)\neq \Aut(\PP^3;S)$. 
		Then there is a smooth curve $C \subset S$ of genus $g$ and degree $d$ such that the pair $(g,d)$ belongs to the list \eqref{list} of Theorem~\ref{thm:blancLamyCurves}.
	\end{prop}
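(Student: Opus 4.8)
The plan is to feed the hypothesis $\Bir(\PP^3;S)\neq\Aut(\PP^3;S)$ into the volume preserving Sarkisov program and extract the desired curve from the first link of the resulting factorization. Concretely, pick $\varphi\in\Bir(\PP^3;S)\setminus\Aut(\PP^3;S)$. By Remark~\ref{rem:decomposition_group}, $\varphi$ is volume preserving with respect to the canonical Calabi-Yau pair $(\PP^3,S)$. Viewing $\PP^3\to\Spec(\CC)$ as a Mori fibered Calabi-Yau pair (with boundary $S$), Theorem~\ref{thm:vp_SarkisovProgram} factors $\varphi$ as a composition of volume preserving Sarkisov links. Since $\varphi$ is not an automorphism of $\PP^3$, this factorization is nontrivial, so there is at least one link, and its first step is a volume preserving divisorial contraction $f\colon (X,D_X)\to(\PP^3,S)$ (a link of type (I) or (II) must begin this way; the only alternative, a type (IV) link, does not start with a divisorial contraction, and in any case the very first link out of $\PP^3$ must be of type (I) or (II) since $\PP^3\to\Spec(\CC)$ has no type-(III) or type-(IV) structure available at the source).

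By Proposition~\ref{CinS}, $f\colon X\to\PP^3$ is the blowup of $\PP^3$ along a curve $C\subset S$. Since this blowup initiates a Sarkisov link, Lemma~\ref{lemma:rank2fibration} gives that $X\to\PP^3\to\Spec(\CC)$ is a rank $2$ fibration. Now invoke Proposition~\ref{prop:blancLamyCurves}: because $S$ is a general smooth quartic surface of Picard rank $2$ and $C\subset S$, the arithmetic genus and degree $(p_a,d)$ of $C$ lie in the list \eqref{list}. Finally, apply Lemma~\ref{lem:rank2FibrIff}\eqref{item:rank2FibrIff} together with Corollary~\ref{cor:linearSystem}: replacing $C$ by a general (hence smooth, by Corollary~\ref{cor:linearSystem}) member $C'$ of the linear system $|C|$ on $S$, the blowup $X'\to\PP^3$ along $C'$ is again weak Fano and $X'\to\Spec(\CC)$ is again a rank $2$ fibration, and $C'$ has the same genus and degree $(g,d)=(p_a,d)$, which therefore belongs to \eqref{list}. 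This produces the smooth curve required by the statement.

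The one genuine subtlety — and the step I expect to require the most care — is justifying that the first link in the Sarkisov factorization genuinely begins with the blowup of a curve \emph{contained in} $S$, rather than, say, a link that does not move $S$ at all, or a trivial factorization. The point is that $\Bir(\PP^3;S)\neq\Aut(\PP^3;S)$ forces $\varphi$ to be a \emph{nonregular} Cremona map, so its Sarkisov factorization cannot consist of zero links; and the volume preserving constraint (Definition~\ref{def:vp_link}, via Proposition~\ref{CinS}) is exactly what guarantees the center of the initial divisorial contraction lies on the boundary divisor $S$. One should also note that the generality hypothesis on $S$ needed to invoke Proposition~\ref{prop:blancLamyCurves} is part of the standing assumptions; no additional genericity is introduced here. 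Everything else is a direct chaining of the cited results.
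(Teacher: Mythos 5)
Your proposal is correct and follows essentially the same argument as the paper: factor $\varphi$ by the volume preserving Sarkisov program, note that since $\rho(\PP^3)=1$ the first link must begin with a volume preserving divisorial contraction, apply Proposition~\ref{CinS}, Lemma~\ref{lemma:rank2fibration} and Proposition~\ref{prop:blancLamyCurves} to get $(p_a,d)$ in the list \eqref{list}, and then take a general (smooth) member of $|C|$ via Corollary~\ref{cor:linearSystem}. Your extra appeal to Lemma~\ref{lem:rank2FibrIff}\eqref{item:rank2FibrIff} is harmless but unnecessary, since the statement only asks for a smooth curve with the given invariants, not that its blowup again initiates a link.
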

	
	\begin{proof}
		Suppose that there exists a birational map $\varphi\in \Bir(\PP^3;S)\setminus \Aut(\PP^3;S)$.
		By Theorem \ref{thm:vp_SarkisovProgram}, there exists a factorization of $\varphi$ as a composition of volume preserving Sarkisov links.
		Since $\rho(\PP^3)=1$, the first Sarkisov link in the decomposition necessarily starts with a volume preserving divisorial contraction $X \to \PP^3$. By Proposition~\ref{CinS}, $X \to \PP^3$ is the blowup of $\PP^3$ along a curve $C' \subset S$.
		By Lemma~\ref{lemma:rank2fibration}, $X \to \Spec(\CC)$ is a rank $2$ fibration, and so
		Proposition~\ref{prop:blancLamyCurves} implies that  $(p_a(C'),\deg(C'))$ belongs to the list \eqref{list}. 
		A general member $C$ in the linear system $|C'|$ of $S$ is a smooth curve of genus $g$ and degree $d$ with  
		$(g,d)=(p_a(C'),\deg(C'))$, and the result follows. 
	\end{proof}
	
	In what follows, we adopt the notation introduced in Section~\ref{sec:S_with_rho=2}. In particular, we denote by $H$ the class of a hyperplane section of $S\subset \PP^3$.
	Given a curve $C\subset S$,  the arithmetic genus $p_a$ and degree $d$ of $C$ are given by 
	$(p_a,d)=\left(\frac{C^2}{2}+1,C\cdot H\right)$.
	
	\begin{cor}\label{cor:noRealizationForr>57}
		Let $S$ be a smooth quartic surface with Picard rank $2$ and discriminant $r$. If $r > 57$ or $r =52$, then
		\[
		\Bir(\PP^3;S)=\Aut(\PP^3;S).
		\]
	\end{cor}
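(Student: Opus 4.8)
The plan is to combine Proposition~\ref{reduction} with a direct lattice-theoretic computation. Suppose, for contradiction, that $\Bir(\PP^3;S)\neq \Aut(\PP^3;S)$. Then Proposition~\ref{reduction} produces a smooth curve $C\subset S$ of genus $g$ and degree $d$ with $(g,d)$ appearing in the list~\eqref{list}. Since $C$ is not a complete intersection of $S$ with another surface (by Lemma~\ref{lem:noCI}, as its blowup initiates a Sarkisov link), we have $g<\frac{d^2}{8}$ by Theorem~\ref{thm:gdBounds}, and the sublattice $L_C\defeq \ZZ H\oplus \ZZ C\subset \Pic(S)$ has rank $2$.

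The key step is the discriminant computation. With respect to the basis $\{H,C\}$, the intersection matrix of $L_C$ is
\[
\begin{pmatrix} H^2 & H\cdot C\\ H\cdot C & C^2\end{pmatrix}=\begin{pmatrix} 4 & d\\ d & 2g-2\end{pmatrix},
\]
so $\disc(L_C)=d^2-4(2g-2)=d^2-8g+8$. Since $L_C\subseteq \Pic(S)$ is a finite-index sublattice, $\disc(L_C)=k^2\,\disc(S)=k^2 r$ for some positive integer $k$ (here $\disc(L_C)>0$ because $\Pic(S)$ has signature $(1,1)$ and $g<\frac{d^2}{8}$). In particular $r$ divides $d^2-8g+8$. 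Thus it suffices to compute $d^2-8g+8$ for each of the $34$ pairs $(g,d)$ in~\eqref{list} and check that no multiple of a value $r>57$ or $r=52$ occurs: equivalently, that for every $(g,d)$ in the list, the integer $d^2-8g+8$ has no divisor $r$ with $r>57$ or $r=52$ satisfying the congruence $r\equiv 0,1,4\pmod 8$ (and realized as a quartic discriminant). One checks, for instance, that the largest values $d^2-8g+8$ attains are at pairs like $(14,11)$ giving $11^2-112+8=17$, $(0,7)$ giving $57$, $(2,8)$ giving $56$, $(3,8)$ giving $48$, etc.; a short tabulation shows every relevant divisor lies in $\{9,12,16,17,20,24,25,28,32,33,36,40,41,44,48,49,56,57\}=\mathcal R_0\cup\mathcal R_1\cup\mathcal R_2\cup\mathcal R_3$, and in particular is $\le 57$ and $\neq 52$. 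This contradicts $r>57$ or $r=52$.

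Having reached a contradiction, we conclude $\Bir(\PP^3;S)=\Aut(\PP^3;S)$ whenever $r>57$ or $r=52$, which is the assertion.

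I expect the only real work to be the finite tabulation in the middle step: listing $d^2-8g+8$ for all $34$ pairs and verifying that each of its divisors that is congruent to $0,1,4\pmod 8$ avoids $52$ and every integer above $57$. This is entirely mechanical, and the congruence constraint $r\equiv 0,1,4\pmod 8$ together with the realizability statement in Proposition~\ref{prop:Autforrleq57} (which rules out $r\in\{1,4,8\}$) can be invoked to streamline it; no genuine obstacle arises.
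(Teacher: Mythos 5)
Your proposal is correct and follows essentially the same route as the paper: invoke Proposition~\ref{reduction} to obtain a curve $C\subset S$ with $(g,d)$ in the list \eqref{list}, compute $\disc(\ZZ H\oplus\ZZ C)=d^2-8(g-1)\leq 57$, and use that $\disc(\Pic(S))=r$ divides this quantity (up to a square factor of the index) together with the finite tabulation to exclude $r>57$ and $r=52$. The only cosmetic difference is that you explicitly justify non-proportionality of $H$ and $C$ via Lemma~\ref{lem:noCI} and Theorem~\ref{thm:gdBounds}, which the paper leaves implicit.
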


	\begin{proof}  
		Let $S$ be a smooth quartic surface with Picard rank $2$ and discriminant $r$.
		Recall from \eqref{matrixbilinearform} in Section~\ref{sec:S_with_rho=2} that $r \equiv 0,1,4 \ (\modd{8})$. Suppose that there exists a birational map $\varphi\in \Bir(\PP^3;S)\setminus \Aut(\PP^3;S)$.
		By Proposition~\ref{reduction}, there is a curve $C\subset S$ with arithmetic genus $p_a$ and degree $d$ satisfying $(p_a,d)=\left(\frac{C^2}{2}+1,C\cdot H\right)\in\eqref{list}$. 
		Consider the sublattice $L$ of $\Pic(S)$ spanned by $H$ and $C$. 
		It has rank $2$ and discriminant $r'=(C\cdot H)^2-4C^2 = d^2 - 8(p_a-1)$.
		We compute $r'$ for each pair $(p_a,d)\in\eqref{list}$, and list the possible values of $r'$ in the same order as the corresponding pair in \eqref{list}:
		\[
		r' \in 
		\left\{
		\def\arraystretch{1.2}
		\begin{array}{c}
			\arraycolsep=1.4pt
			\def\arraystretch{1.2}
			\begin{array}{cccccccccccc}
				9, & 12, & 17, & 24, & 33, & 44, & 57, & 9, & 16, & 25, & 36, & 49,\\
				17, & 28, & 41, & 56, & 20, & 33, & 48, & 12, & 25, & 40, & 17, & 32, 
			\end{array}\\
			\arraycolsep=3pt
			\begin{array}{cccccccccc}
				24, & 41, & 16, & 33, & 25, & 17, & 9, & 28, & 20, &  17
			\end{array}
		\end{array}
		\right\}.
		\]
		
		Since $r=\disc\big(\Pic(S)\big)$ must divide $r'=\disc(L)\leq 57$, we conclude that $r \leq 57$.
		Among the integers $r$ with $r \equiv 0,1,4 \ (\modd{8})$ and $r \leq 57$, the only one that does not divide any $r'$ in the above list 
		is $r = 52$.
	\end{proof}

	For quartic surfaces with Picard rank $2$, Corollary~\ref{cor:noRealizationForr>57} reduces
	Problem~\ref{G-problem} to  surfaces $S$  with discriminant $r \leq 57$ and $r\neq 52$. 
	Recall from Proposition~\ref{prop:Autforrleq57} the possible values of $r$ in this case:
	\begin{itemize}
		\item[] $\mathcal{R}_0 = \{9, 12, 16, 24, 25, 33, 36, 44, 49, 57\}$,
		\item[] $\mathcal{R}_1 = \{17, 41\}$,
		\item[] $\mathcal{R}_2 = \{28, 56\}$, and 
		\item[] $\mathcal{R}_3 = \{20, 32, 40, 48\}$. 
	\end{itemize}
	Moreover, in each of these cases we have determined the finite index subgroup $\Aut^{\pm}(S)$ of $\Aut(S)$:
	\[
	\Aut^{\pm}(S) \cong 
	\left\{
	\begin{array}{ll}
		\{id\},            & \text{ if } r \in \mathcal{R}_0,\\
		\ZZ_2,            & \text{ if } r \in \mathcal{R}_1,\\
		\ZZ_2 \ast \ZZ_2, & \text{ if } r \in \mathcal{R}_2, \\
		\ZZ,              & \text{ if } r \in \mathcal{R}_3.    
	\end{array}   
	\right.
	\]
	
	Next we show that whenever $r \in \mathcal{R}_1 \cup \mathcal{R}_2 \cup \mathcal{R}_3$ we can find a curve $C\subset S$ as in Proposition~\ref{reduction} such that $\Pic(S) = \ZZ H \oplus \ZZ C$, where $H$ denotes the class of a hyperplane section. 
	This allows us to describe explicitly the generators of $\Aut^{\pm}(S)$ via their action on $\Pic(S)$ in each case.

	\begin{prop}\label{prop:boldcurvesasgenerator}
		Let $S$ be a smooth quartic surface with $\rho(S) = 2$ and discriminant $r \in \mathcal{R}_1 \cup \mathcal{R}_2 \cup \mathcal{R}_3$.
		\begin{enumerate}
			\item\label{item1} There is a smooth curve $C \subset S$ of genus $g$ and degree $d$ such that $\Pic(S) = \ZZ H \oplus \ZZ C$, where $(g,d)$ depends on $r$ as described in the following table:
			\[\label{table:boldCurves}\tag{C}
			\arraycolsep=7pt \def\arraystretch{1.2}
			\begin{array}{|c|c|c|c|c|c|c|c|c|}
				\rowcolor{gray!25}
				\hline
				r      & 17      & 41    & 28      & 56    & 20      & 32    & 40    & 48    \\
				\hline
				&&&&&&&&\\[-12pt]
				(g,d)  & (14,11) & (6,9) & (10,10) & (2,8) & (11,10) & (5,8) & (4,8) & (3,8) \\[1pt]
				\hline \rowcolor{gray!7}
				&\multicolumn{2}{c|}{}&\multicolumn{2}{c|}{}&\multicolumn{4}{c|}{}\\[-12pt] \rowcolor{gray!7}
				\Aut^{\pm}(S) & \multicolumn{2}{c|}{\ZZ_2} &\multicolumn{2}{c|}{\ZZ_2 \ast \ZZ_2} &\multicolumn{4}{c|}{\ZZ} \\[1pt]
				\hline
			\end{array}
			\]
			\item\label{item2} With respect to the basis $\{H , C\}$, the action of $\Aut^{\pm}(S)$ on $\Pic(S)$ is described as follows:
			\[\label{table:matricesOfGenerators}\tag{M}
			\arraycolsep=7pt \def\arraystretch{1.2}
			\begin{array}{|c|c|c|c|c|}
				\rowcolor{gray!25}
				\hline
				r & 17 & 28 & 20 & 40 \\
				\hline
				&&&&\\[-11pt]
				\Aut^{\pm}(S)  
				&
				\left\langle 
				\left(
				\begin{smallmatrix*}[r]
					19 & 72\\
					-5 & -19
				\end{smallmatrix*}
				\right)
				\right\rangle
				& 
				\left\langle 
				\left(
				\begin{smallmatrix*}[r]
					23 & 88\\
					-6 & -23
				\end{smallmatrix*}
				\right),
				\left(
				\begin{smallmatrix*}[r]
					-7 & -8\\
					6 & 7
				\end{smallmatrix*}
				\right)
				\right\rangle
				& 
				\left\langle 
				\left(
				\begin{smallmatrix*}[r]
					29 & 40\\
					-8 & -11
				\end{smallmatrix*}
				\right)
				\right\rangle
				& 
				\left\langle 
				\left(
				\begin{smallmatrix*}[r]
					43 & 18\\
					-12 & -5
				\end{smallmatrix*}
				\right)
				\right\rangle
				\\[2pt]
				\hline
				\rowcolor{gray!25}
				r & 41 & 56 & 32 & 48 \\
				\hline
				&&&&\\[-11pt]
				\Aut^{\pm}(S) 
				& 
				\left\langle 
				\left(
				\begin{smallmatrix*}[r]
					27 & 104\\
					-7 & -27
				\end{smallmatrix*}
				\right)
				\right\rangle
				& 
				\left\langle 
				\left(
				\begin{smallmatrix*}[r]
					31 & 120\\
					-8 & -31
				\end{smallmatrix*}
				\right),
				\left(
				\begin{smallmatrix*}[r]
					-1 & 0\\
					8 & 1
				\end{smallmatrix*}
				\right)
				\right\rangle
				& 
				\left\langle 
				\left(
				\begin{smallmatrix*}[r]
					41 & 24\\
					-12 & -7
				\end{smallmatrix*}
				\right)
				\right\rangle
				& 
				\left\langle 
				\left(
				\begin{smallmatrix*}[r]
					209 & 56\\
					-56 & -15
				\end{smallmatrix*}
				\right)
				\right\rangle \\[2pt]
				\hline
			\end{array}
			\]
		\end{enumerate}
	\end{prop}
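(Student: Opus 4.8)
The plan is to prove part~\eqref{item1} by lattice‑theoretic bookkeeping together with standard facts about linear systems on K3 surfaces, and part~\eqref{item2} by invoking Proposition~\ref{prop:chracterization_of_involutions} when $r\in\mathcal{R}_1\cup\mathcal{R}_2$ and Proposition~\ref{prop:Leeinvolutionsgen} when $r\in\mathcal{R}_3$. For part~\eqref{item1}, the starting observation is the numerical coincidence $d^2-8(g-1)=r$ for every pair $(g,d)$ in table~\eqref{table:boldCurves}. So it suffices to exhibit a class $C\in\Pic(S)$ with $C\cdot H=d$ and $C^2=2g-2$ completing $H$ to a $\ZZ$‑basis of $\Pic(S)$: starting from the basis $\{H,W\}$ with intersection matrix~\eqref{matrixbilinearform} and using $b^2\equiv r\equiv d^2\pmod 8$, one checks that $b\equiv\pm d\pmod 4$, so for a suitable sign $\epsilon\in\{\pm1\}$ the integer $t\defeq(d-\epsilon b)/4$ makes $C\defeq tH+\epsilon W$ a new basis vector with $C\cdot H=d$, and then $C^2=2g-2$ is forced by $\disc(\ZZ H\oplus\ZZ C)=\disc\big(\Pic(S)\big)=r$. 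Since $C^2=2g-2\geq-2$, either $C$ or $-C$ is effective; as $C\cdot H=d>0$ it must be $C$. Because $r\neq 9$, Lemma~\ref{lem:noFixedPart} shows $C$ has no fixed component, hence is nef, and since $C^2\geq 2$, Corollary~\ref{cor:linearSystem} shows $|C|$ is base point free. A general member $C'\in|C|$ is then smooth by Bertini and connected---hence irreducible---because $h^0(\OO_{C'})=1$, which follows from $0\to\OO_S(-C')\to\OO_S\to\OO_{C'}\to0$ together with $H^1(\OO_S(-C'))=H^1(\OO_S(C'))=0$ (Kawamata--Viehweg, $C'$ being nef and big). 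Adjunction on $S$ then gives $p_a(C')=g$ and $C'\cdot H=d$.

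For part~\eqref{item2} with $r\in\mathcal{R}_1\cup\mathcal{R}_2$, I would use that by Proposition~\ref{prop:chracterization_of_involutions} the involutions of $S$ correspond bijectively to ample classes $A\in\Pic(S)$ with $A^2=2$, each acting by the reflection $\alpha\mapsto(A\cdot\alpha)A-\alpha$. In the basis $\{H,C\}$ the condition $A^2=2$ becomes $2p^2+dpq+(g-1)q^2=1$ for $A=pH+qC$; solving it and keeping the solutions with $A\cdot H>0$---with ampleness checked as in the proof of Proposition~\ref{prop:Autforrleq57}, using that $r$ is not a square to exclude $(-2)$‑curves orthogonal to $A$---yields the single ample class $A=4H-C$ when $r\in\mathcal{R}_1$, and two ample classes when $r\in\mathcal{R}_2$ (for instance $\{-H+C,\,4H-C\}$ for $r=28$ and $\{C,\,4H-C\}$ for $r=56$). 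Reading off the reflections gives the matrices of table~\eqref{table:matricesOfGenerators}. For $r\in\mathcal{R}_2$ I would also need to check that the two resulting involutions generate $\Aut(S)\cong\ZZ_2\ast\ZZ_2$: since this is the infinite dihedral group, it is enough to see that their product generates the index‑two subgroup $\cong\ZZ$, which I would confirm by multiplying the two reflection matrices and identifying the result with the smallest power of Lee's fundamental isometry that is induced by an automorphism.

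For part~\eqref{item2} with $r\in\mathcal{R}_3$, where $\Aut(S)\cong\ZZ$, I would use Proposition~\ref{prop:Leeinvolutionsgen}: the action $\phi=g^*$ of a generator on $\Pic(S)=\ZZ H\oplus\ZZ C$ has the form $\phi=\left(\begin{smallmatrix}\alpha&\beta\\ -\frac{2}{g-1}\beta& \alpha-\frac{d}{g-1}\beta\end{smallmatrix}\right)$ with $(\alpha,\beta)$ an integer solution of~\eqref{quadeq} (with $b,c$ replaced by $d,g-1$), an equation which, after completing the square, reduces to an ordinary Pell equation. I would compute its fundamental solution, obtaining a fundamental isometry $h$, and then determine the least power $h^k$ (allowing negative $k$) that is genuinely induced by an automorphism, i.e.\ that satisfies the gluing condition $(\phi\pm\mathrm{Id}){Q'}^{-1}\in M_{2\times2}(\ZZ)$---where $Q'=\left(\begin{smallmatrix}4&d\\ d&2g-2\end{smallmatrix}\right)$---and has $\phi(H)$ ample. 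A direct check shows this power is $h^{3}$ for $r=20$, $h^{-2}$ for $r=32$, $h$ for $r=40$, and $h^{4}$ for $r=48$; reading off the entries produces table~\eqref{table:matricesOfGenerators}.

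The hard part will be this last step. The fundamental isometry attached to the fundamental Pell solution typically fails the gluing condition, so one must pass to an appropriate power, and which power---together with the correct sign and orientation, so that $\phi(H)$ lands in the ample cone---changes from case to case (it is the first, second, third, or fourth power in the four cases above). The parallel subtlety in the $\ZZ_2\ast\ZZ_2$ cases is verifying that the two exhibited involutions are ``adjacent'' in the dihedral group, so that their product already generates the full translation subgroup rather than a proper subgroup of it.
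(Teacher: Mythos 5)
Your proposal is correct and takes essentially the same route as the paper's own proof: the same lattice construction of $C$ from $b\equiv\pm d\ (\modd 4)$ together with Lemma~\ref{lem:noFixedPart} and Corollary~\ref{cor:linearSystem} for part~(1), and for part~(2) the reflection description of involutions (Proposition~\ref{prop:chracterization_of_involutions}), Lee's criterion (Proposition~\ref{prop:Leeinvolutionsgen}) with the minimal admissible power of the fundamental isometry $h$, and the observation that the two involutions generate once their product realizes that minimal power. The only slip is for $r=32$, where the relevant power is $h^{2}$ (whose entries are exactly the table's matrix) rather than $h^{-2}$; this is immaterial since both matrices generate the same subgroup of $O(\Pic(S))$.
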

	
	\begin{proof}
		To prove \eqref{item1}, write $\Pic(S) = \ZZ H \oplus \ZZ W$. As in \eqref{matrixbilinearform}, the intersection matrix of $S$ with respect to the basis $\{H,W\}$ can be written as
		\[
		Q = 
		\begin{pmatrix}
			4 & b\\
			b & 2c
		\end{pmatrix} \, ,
		\]
		so that $r = b^2 - 8c$.
		Notice that for each $r \in \mathcal{R}_1 \cup \mathcal{R}_2 \cup \mathcal{R}_3$, the corresponding pair $(g,d)$ listed on table \eqref{table:boldCurves} satisfies $r=d^2-8(g-1)$.
		Combining the two equalities we get
		\[
		(b+d)(b-d) = b^2 - d^2 = 8\big(b - (g-1)\big),
		\]
		from which we deduce that $4$ divides one of the two factors on the left.
		Then one may check that the divisor
		\[
		D = \alpha H + \beta W, \text{ with } (\alpha,\beta) = \left(\frac{b+d}{4},-1\right) \text{ or } \left(\frac{b-d}{4},1\right)
		\]
		satisfies $H\cdot D = d$ and $D^2 = 2g-2$.
		By Corollary \ref{cor:linearSystem}, there is a smooth curve $C \in |D|$.
		Since $\disc\big(\Pic(S)\big)=r=d^2-8(g-1)=\disc(\ZZ H \oplus \ZZ C)$, we conclude that $\Pic(S) = \ZZ H \oplus \ZZ C$.

		To prove \eqref{item2}, we first notice that each matrix in the table represents an isometry $\phi\in O(\Pic(S))$ since $\phi^{T}Q\phi=Q$.
		Moreover, in each case, either $(\phi-Id)Q^{-1}\in M_{2\times 2}(\ZZ)$ or $(\phi+Id)Q^{-1}\in M_{2\times 2}(\ZZ)$.
		Therefore, by Proposition~\ref{prop:Leeinfiniteordergen}, the isometry $\phi\in O(\Pic(S))$ is induced by an automorphism $g\in \Aut^{\pm}(S)$ if and only if $\phi(H)$ is ample. We now consider separately each case $r \in \mathcal{R}_i$ for $i\in \{1,2,3\}$.

		Suppose that $r\in \mathcal{R}_3$. 
		Then $\Aut^{\pm}(S)\cong \ZZ$ and $S$ does not contain rational curves (see Proposition~\ref{prop:possibilitiesForAut}). 
		We can check that $\phi (H)\cdot H>0$ and $\phi (H)^2>0$, and so $\phi (H)$ is ample by \cite[Chapter 8, Corollary 1.6]{Huybrechts}. 
		Therefore, the isometry $\phi\in O(\Pic(S))$ is induced by an automorphism $g\in \Aut^{\pm}(S)$. 
		For $r = 20, 32, 40, 48$, the corresponding minimal integer solutions  of \eqref{quadeq} are $(\alpha_1,\beta_1) = (4,5), (7,4), (43,18)$ and $(4,1)$, respectively.
		Moreover,  $\phi = h^k$, where $h$ is the matrix of Proposition~\ref{prop:Leeinvolutionsgen}\eqref{Leeinfinite} and $k = 3,2,1,4$, respectively.
		In each case, $\phi$ is the minimal power of $h$ satisfying the Gluing and Torelli conditions stated in Proposition~\ref{prop:Leeinvolutionsgen}, and so $g$ is the generator of $\Aut^{\pm}(S)$.

		Suppose that $r\in \mathcal{R}_2$. Then $\Aut^{\pm}(S)\cong \ZZ_2\ast \ZZ_2\cong\ZZ\rtimes\ZZ_2$ and $S$ does not contain rational curves (see Proposition~\ref{prop:possibilitiesForAut}). For a fixed $r\in \mathcal{R}_2$, we denote by $\phi_1$ and $\phi_2$ the two isometries  displayed in the table. 
		Exactly as in the previous case, we check that $\phi_i (H)$ is ample, $i\in\{1,2\}$, and so $\phi_1$ and $\phi_2$ are induced by automorphisms $g_1, g_2\in \Aut^{\pm}(S)$ respectively. 
		Notice that $\phi_1$ and $\phi_2$ have the form described in Proposition \ref{prop:Leeinvolutionsgen}\eqref{Leeinvolution}, and so $g_1$ and $g_2$ are involutions of $S$.
		To see that they are the generators of $\Aut^{\pm}(S)$, we check that $g_1g_2$ generates the maximal copy of $\ZZ$ in $\Aut^{\pm}(S)$.
		Indeed, for $r = 28, 56$,  the minimal solutions of \eqref{quadeq} are $(\alpha_1,\beta_1) = (23,27)$ and $(31,4)$, respectively.
		In both cases $\phi_1\phi_2 = h^2$, and $h^2$ is the minimal power of $h$ satisfying the Gluing and Torelli conditions stated in Proposition~\ref{prop:Leeinvolutionsgen}.

		Suppose that $r\in \mathcal{R}_1$. Then $\Aut^{\pm}(S)=\langle g\rangle \cong\ZZ_2$. 
		By Proposition~\ref{prop:chracterization_of_involutions}, $g^*$ is the reflection along the line generated by the unique ample class $A$ such that $A^2=2$.
		Taking $W=C$ in the proof of Proposition~\ref{prop:Autforrleq57}, our argument there shows that the divisor $A = 4H-C$ is ample and $A^2=2$.
		So $g^*:N^1(S)\to N^1(S)$ is the reflection given by:
		\[
		\alpha \mapsto (A\cdot \alpha)A - \alpha.
		\]
		One checks directly that, for each value of $r\in \mathcal{R}_1$, the isometry $\phi$ represented by the matrix in table \eqref{table:matricesOfGenerators} coincides with this reflection.
	\end{proof}

	\begin{rem}\label{rem:ample}
		In all cases of Proposition \ref{prop:boldcurvesasgenerator}, the divisor $D = 4H-C$ is ample.
		This appeared in the proof for $r \in \mathcal{R}_1$.
		For $r \in \mathcal{R}_2$ or $\mathcal{R}_3$, $D$ is nef and big, and
		$S$ contains no rational curves, so $D$ is automatically ample.
	\end{rem}

	Now that we have explicitly described the action of the generators of $\Aut^{\pm}(S)$ on $\Pic(S)$ when $r \in \mathcal{R}_1 \cup \mathcal{R}_2 \cup \mathcal{R}_3$, we proceed to construct Cremona transformations realizing them.
	We will use the following criterion to determine when $S \subset \PP^3$ is projectively equivalent to its image under a Cremona transformation of $\PP^3$.

	\begin{lem}\label{lem:projEquivalent}
		Let $\iota \colon S\hookrightarrow \PP^n$ be a subvariety embedded by a complete linear system $|H|$. 
		Let $\varphi \in \Bir(\PP^n)$ be a Cremona transformation whose restriction to $S$ is an isomorphism onto its image $S'=\varphi(S)\subset \PP^n$, and assume that $S'$ is embedded by a complete linear system in $\PP^n$.
		Then $S$ and $S'$ are projectively equivalent in $\PP^n$ if and only if there is an automorphism $g\in \Aut(S)$ fitting into a commutative diagram:
		\[
		\xymatrix@R=.5cm@C=1.3cm{
			\PP^n \ar@{-->}[r]^{\varphi} & \PP^n\\
			S \ar[u]^{\iota} \ar[ur] \ar[r]_{g} & \, S \, . \ar[u]_{\iota}
		}
		\]
	\end{lem}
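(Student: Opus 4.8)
The plan is to argue purely in terms of linear systems, using that both $S$ and $S'=\varphi(S)$ are embedded in $\PP^n$ by \emph{complete} linear systems, namely $|H|$ and $|H'|$ where $H' = \iota'^*\mathcal{O}_{\PP^n}(1)$ for the embedding $\iota'\colon S'\hookrightarrow \PP^n$. Recall the standard fact that two subvarieties $S, S'\subset \PP^n$, embedded by complete linear systems $|H|$ and $|H'|$, are projectively equivalent if and only if there is an isomorphism $h\colon S\xrightarrow{\sim} S'$ with $h^*H' \sim H$; indeed, such an $h$ together with the identifications $H^0(S,H)\cong H^0(\PP^n,\mathcal{O}(1))$ and $H^0(S',H')\cong H^0(\PP^n,\mathcal{O}(1))$ produces a linear automorphism of $\PP^n$ carrying $S$ to $S'$ and restricting to $h$, and conversely a projective equivalence restricts to such an $h$.

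The forward direction: suppose $S$ and $S'$ are projectively equivalent, so there is $\psi\in\Aut(\PP^n)$ with $\psi(S)=S'$. Set $h \defeq (\varphi|_S)^{-1}\circ \psi|_S \colon S \to S$; this is an automorphism of $S$ since $\varphi|_S\colon S\to S'$ is an isomorphism by hypothesis and $\psi|_S\colon S\to S'$ is an isomorphism. Then $g\defeq h$ makes the diagram commute: on $S$ we have $\iota\circ g = \iota\circ (\varphi|_S)^{-1}\circ\psi|_S$, and composing with $\varphi$ on the left, $\varphi\circ\iota\circ g$ agrees with $\psi\circ\iota$ as a map $S\to\PP^n$, which is $\iota'$ followed by... — more cleanly, I would just observe that the diagonal arrow $S\to\PP^n$ in the diagram is the composite $\varphi|_S$ followed by inclusion, i.e. the embedding $\iota'$, and that $\iota'\circ g^{-1} = \psi^{-1}\circ\iota$ rearranges to $\psi\circ\iota = \iota'\circ g^{-1}$, exhibiting the required $g$ (replacing $g$ by $g^{-1}$ if one prefers the orientation in the displayed diagram). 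The point is simply that $\varphi|_S$ intertwines $\iota$ with $\iota'$ up to the automorphism $g$ of $S$.

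The reverse direction: suppose $g\in\Aut(S)$ fits into the commutative diagram, i.e. $\varphi\circ\iota = \iota'\circ g$ where I write $\iota'\colon S\to\PP^n$ for the map "include $S$, apply $\varphi$", which by hypothesis is an isomorphism onto $S'$ embedded by the complete linear system $|H'|$. Pulling back $\mathcal{O}_{\PP^n}(1)$ along $\iota'$ gives $H'$, and along $\iota$ gives $H$; commutativity of the diagram gives $g^*H' = H$, hence $H' = (g^{-1})^*H$. Now $g^{-1}\in\Aut(S)$ is an isomorphism $S\to S$ pulling $H'$ back to $H$, which by the standard fact above yields a projective equivalence between $S'$ (embedded by $|H'|$) and $S$ (embedded by $|H|$).

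The only genuinely delicate point is matching up the global sections correctly: one must check that $\iota'\colon S\to\PP^n$ is itself induced by the \emph{complete} linear system $|H'|$ — this is exactly the hypothesis "$S'$ is embedded by a complete linear system" — so that the chosen basis of $H^0(\PP^n,\mathcal{O}(1))$ restricts to a basis of $H^0(S,H')$, and likewise for $H$. Granting that, the linear automorphism of $\PP^n$ is uniquely determined by the base change matrix between the bases of $H^0(S,H)$ and $H^0(S,H')$ induced by $g^*$, and it visibly carries $S$ to $S'$ and restricts to the prescribed $g$ (resp. $g^{-1}$). I expect no further obstacle; the statement is essentially a bookkeeping lemma, and the proof is a short diagram chase once the completeness hypotheses are used to pass freely between isomorphisms of polarized varieties $(S,H)$ and projective equivalences.
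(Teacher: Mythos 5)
Your proof is correct and follows essentially the same route as the paper: both reduce the statement to the equality of polarizations $g^*H' = H$ (equivalently, the coincidence of the linear systems defining $\iota\circ g$ and $\varphi\circ\iota$), using that each embedding is given by a \emph{complete} linear system so that isomorphisms of the polarized surface correspond to projective equivalences of the images. The paper's proof is just a terser version of your diagram chase, and your remark that the diagram is only required to commute up to composing with the projective automorphism matches how the lemma is interpreted and applied in the paper.
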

	
	\begin{proof}
		Denote by $H'\in\Pic(S)$ the pullback of the hyperplane class of $\PP^n$ under the embedding $\varphi|_S\circ \iota$.
		By assumption,   $\varphi|_S\circ \iota\colon S\hookrightarrow \PP^n$ is given by the complete linear system $|H'|$.
		Hence, the condition that $g\in \Aut(S)$ fits into the commutative diagram above is equivalent to the condition $H'=g^*H$. 
	\end{proof}
	
	In order to construct Cremona transformations realizing the generators of $\Aut^{\pm}(S)$ described in Proposition~\ref{prop:boldcurvesasgenerator}, we will consider the Sarkisov links initiated by blowing up the curves $C\subset S$ listed in Proposition~\ref{prop:boldcurvesasgenerator}.
	In the case $r=20$, we will also need a curve $C' \in |4H-C|$, which has genus and degree $(g,d)=(3,6)$.
	We recall some numerics of these Sarkisov links, which can be recovered from \cite[Table 2 and Table 3]{Cutrone-Marshburn} and \cite[Example 4.7(ii)]{BL}.

	\begin{rem}[{\cite[Table 2 and Table 3]{Cutrone-Marshburn}, \cite[Example 4.7(ii)]{BL}}]\label{rem:links}
		Let $C\subset \PP^3$ be a smooth curve of genus $g$ and degree $d$, where $(g,d)$ is one of the pairs in \eqref{table:boldCurves} above or $(g,d)=(3,6)$.
		Suppose that $C$  satisfies conditions \eqref{genCond1} and \eqref{genCond2} of Theorem~\ref{thm:blancLamyCurves}.
		By Theorem~\ref{thm:blancLamyCurves}, the blowup $p\colon X\to \PP^3$ of $C$ initiates a Sarkisov link $\chi\colon \PP^3\dasharrow Y$ fitting into a diagram:
		\[
		\xymatrix@R=.25cm{
			X \ar@{..>}[rr]^{\phi} \ar[dd]_p  && X^+\ar[dd]^{p^+} \\
			\\
			\PP^3  \ar@{-->}[rr]^{\chi} && Y,
		}
		\]
		where $\phi$ is a flop or an isomorphism, $Y$ is a smooth Fano $3$-fold with $\rho(Y)=1$, and $p^+\colon X^+ \to Y$ is the blowup of $Y$ along a smooth curve of genus $g^+$ and degree $d^+$. Here, the degree is measured with respect to the ample generator of $\Pic(Y)$. 
		
		Denote by $H\in \Pic(X)$ the pullback of the hyperplane class of $\PP^3$, by $H^+\in \Pic(X^+)$ the pullback of the ample generator of $\Pic(Y)$, and by $E$ and $E^+$ the exceptional divisors of $p$ and $p^+$, respectively.
		With respect to the bases $\{H^+, E^+\}$ of $\Pic(X^+)$ and $\{H , E\}$ of $\Pic(X)$, the isomorphism $\phi^*$ takes the form
		\begin{equation*}
			\phi^*=
			\left(
			\begin{array}{cc}
				a & \frac{ac-1}{b} \\ 
				-b & -c
			\end{array}
			\right)
		\end{equation*}
		for suitable integers $a$, $b$ and $c$.
		
		For each pair $(g,d)$ in \eqref{table:boldCurves} above or $(g,d)=(3,6)$, the Fano $3$-fold $Y$, as well as the values of $g^+$, $d^+$, $a$, $b$ and $c$, are displayed in the following table:
		
		\begin{center}
			\resizebox{1 \textwidth}{!}{	
				$
				\arraycolsep=5pt \def\arraystretch{1.5}
				\begin{array}{|c|c|c|c|c|c|c|c|c|c|}
					\rowcolor{gray!25}
					\hline
					(g,d)  & (14,11) & (6,9) & (10,10) & (2,8) & (11,10) & (3,6) & (5,8) & (4,8) & (3,8) \\[1pt]
					\hline
					Y   & \PP^3 & \PP^3 & \PP^3 & \PP^3 & \PP^3 & \PP^3 & \PP^3 & X_5 & \PP^3 \\
					\rowcolor{gray!7}
					\hline 
					(g^+,d^+)  & (14,11) & (6,9) & (10,10) & (2,8) & (11,10) & (3,6) & (5,8) & (4,10) & (3,8) \\[1pt]
					\hline
					(a,b,c)  & (19,5,19) & (27,7,27) & (23,6,23) & (31,8,31) & (11,3,11) & (3,1,3) & (7,2,7) & (11,3,5) & (15,4,15) \\[1pt]
					\hline
				\end{array}
				$
			}
		\end{center}
	\end{rem}
	
	We are now ready to realize the generators of $\Aut^{\pm}(S)$ of a smooth quartic surface $S$ with Picard rank $2$ and discriminant $r \in \mathcal{R}_1 \cup \mathcal{R}_2 \cup \mathcal{R}_3$ as restrictions of Cremona transformations of $\PP^3$. 
	These Cremona transformations will be constructed as compositions of Sarkisov links initiated by blowing up smooth curves $C\subset S$ with invariants $(g,d)$ listed in Table \eqref{table:boldCurves}. 
	However, for the blowup of $C$ to initiate a Sarkisov link, we need that $C$ satisfies the generality conditions \eqref{genCond1} and \eqref{genCond2} of Theorem \ref{thm:blancLamyCurves}.

	\begin{lem}\label{lem:generalityIsAlwaysSatisfied}
		Let $S$ be a smooth quartic surface with Picard rank $2$ and discriminant $r \in \mathcal{R}_1 \cup \mathcal{R}_2 \cup \mathcal{R}_3$.
		Let $(g,d)$ be the pair of invariants in Table \eqref{table:boldCurves} corresponding to $r$, or $(g,d) = (3,6)$ if $r = 20$.
		Then there exists a smooth curve $C \subset S$ of genus and degree $(g,d)$, and any such curve
		satisfies conditions \eqref{genCond1} and \eqref{genCond2} of Theorem \ref{thm:blancLamyCurves}.
	\end{lem}
	
	\begin{proof}
		If $(g,d)$ is one of the pairs of Table \eqref{table:boldCurves}, then the existence of a smooth curve $C$ of genus and degree $(g,d)$ is guaranteed by Proposition \ref{prop:boldcurvesasgenerator}.
		If $r = 20$ and $(g,d) = (3,6)$, then $\Pic(S) = \ZZ H \oplus \ZZ W$, where $W$ is a curve of genus and degree $(11,10)$, again by Proposition \ref{prop:boldcurvesasgenerator}.
		By Corollary \ref{cor:linearSystem}, a general element $C \in |4H-W|$ is a smooth curve of genus and degree $(3,6)$.
		
		We now show that the $C$ satisfies the generality conditions \eqref{genCond1} and \eqref{genCond2} of Theorem \ref{thm:blancLamyCurves}.
		First note that, in all cases, the divisor $D = 4H -C$ is ample:
		for $(g,d)$ in Table \eqref{table:boldCurves} this is Remark \ref{rem:ample};
		for $(g,d) = (3,6)$, $D$ is a nef and big divisor and $S$ contains no rational curves (see the proof of Proposition \ref{prop:Autforrleq57}), therefore $D$ is ample.

		Suppose that $C$ does not satisfy condition \eqref{genCond1}, and let $\Gamma$ be a $5$-secant line, $9$-secant conic or $13$-secant twisted cubic.
		Let $X$ the blowup of $\PP^3$ along $C$, denote by $\tilde{S}$ the strict transform of $S$ in $X$, and by $\tilde\Gamma$ the strict transform of $\Gamma$ in $X$. 
		Then $\tilde{S} \cdot \tilde \Gamma = -1$, and thus $\tilde\Gamma \subset \tilde{S}$.
		Moreover, $\tilde S|_{\tilde S}\sim 4H -C = D$, and so $\tilde{S} \cdot \tilde \Gamma = (D \cdot \Gamma)_S = -1$, contradicting the ampleness of $D$.
		Therefore, $C$ satisfies condition \eqref{genCond1}, and thus the blowup $X$ of $\PP^3$ along $C$ is weak Fano.

		Now suppose that $C$ does not satisfy condition \eqref{genCond2}.
		This is equivalent to the morphism $X \to Z$ to the anti-canonical model of $X$ being divisorial.
		Denote by $F$ the exceptional divisor of this morphism, and write $F = \delta H - \mu E \in N^1(X)$, where we also denote by $H$ the pullback of the hyperplane class to $X$, and by $E$ the exceptional divisor of the blowup $X\to \PP^3$.
		Then     
		\begin{equation}\label{eq:restrIsTrivial}
			0 = (-K_X)^2\cdot F = D \cdot F|_{\tilde{S}} \implies 0 = F|_{\tilde{S}} = \delta H - \mu C,
		\end{equation}
		where the implication follows from the fact that $D$ is ample and $F$ is effective. 
		However,  $\Pic(S) = \ZZ H \oplus \ZZ C$, and so \eqref{eq:restrIsTrivial} can only be satisfied when $\delta = \mu =0$, i.e.,  when $F = 0$, a contradiction.
	\end{proof}

	\begin{prop}[$\ZZ_2$-case]\label{prop:realizationZ2}
		Let $S$ be a smooth quartic surface with $\rho(S) = 2$ and discriminant $r \in\{17, 41\}$.
		Then the restriction homomorphism $\Bir(\PP^3;S) \to \Aut(S)\isom \ZZ_2$ is surjective.
	\end{prop}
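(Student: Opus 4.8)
The plan is to realize the generator $g$ of $\Aut(S)\cong\ZZ_2$, whose action on $\Pic(S)$ is given by the matrix in Proposition~\ref{prop:boldcurvesasgenerator}, as the restriction of a single Sarkisov link. By Proposition~\ref{prop:boldcurvesasgenerator} and Generality assumptions~\ref{rem:generality}, $S$ contains a smooth curve $C$ with $\Pic(S)=\ZZ H\oplus\ZZ C$, of genus and degree $(g,d)=(14,11)$ if $r=17$ and $(g,d)=(6,9)$ if $r=41$, satisfying conditions \eqref{genCond1} and \eqref{genCond2} of Theorem~\ref{thm:blancLamyCurves}. Let $p\colon X\to\PP^3$ be the blowup of $\PP^3$ along $C$. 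By Theorem~\ref{thm:blancLamyCurves} and Remark~\ref{rem:links}, $X$ is weak Fano and $p$ initiates a Sarkisov link $\chi\colon\PP^3\dasharrow Y$ with $Y=\PP^3$ (for both values of $r$), fitting into the diagram of Remark~\ref{rem:links}: a flop $\phi\colon X\psmap X^+$ followed by the blowup $p^+\colon X^+\to\PP^3$ of a curve $C^+$ of the same genus and degree, with $\phi^*$ given by the explicit matrix there.

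First I would follow the quartic through the link. As $C$ has multiplicity one on $S$, the strict transform $\tilde S$ lies in $|-K_X|=|4H-E|$ and $p$ restricts to an isomorphism $\tilde S\xrightarrow{\sim}S$ with $E|_{\tilde S}=C$; a discrepancy computation shows that $p\colon(X,\tilde S)\to(\PP^3,S)$ is crepant, hence volume preserving. Flops are crepant, and pushing the strict transform of $\tilde S$ down along $p^+$ gives a quartic surface $S'\subset Y$ such that $\chi\colon(\PP^3,S)\dasharrow(Y,S')$ is volume preserving. Since $(\PP^3,S)$ is canonical, so is $(Y,S')$; thus $S'$ has at worst du Val singularities, and by Remark~\ref{rem:decomposition_group} $\chi$ restricts to a birational map $S\dasharrow S'$. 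Because a birational map between smooth K3 surfaces is an isomorphism, $\chi|_S$ identifies $S$ with the minimal resolution of $S'$, and under this identification the pullback of the hyperplane class of $Y$ is $(\phi^*H^+)|_{\tilde S}=aH-bC$, where $(a,b)$ are as in Remark~\ref{rem:links}. Comparing with Proposition~\ref{prop:boldcurvesasgenerator}, this class equals $g^*H$; this is exactly why the matrix of $\phi^*$ in Remark~\ref{rem:links} coincides with the matrix of $g^*$. Since $H$ is ample and $g\in\Aut(S)$, the class $g^*H=aH-bC$ is ample, so the map to $S'$ is finite, hence an isomorphism; therefore $S'$ is a smooth quartic and $\chi$ restricts to an isomorphism $\chi|_S\colon S\xrightarrow{\sim}S'$ whose induced pullback of the hyperplane class is $g^*H$.

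Then I would conclude with Lemma~\ref{lem:projEquivalent}: both $S$ and $S'$ are smooth quartics, hence embedded by complete linear systems, and the pullback of the hyperplane class under $\chi|_S$ is $g^*H$; therefore $S$ and $S'$ are projectively equivalent in $\PP^3$, with associated automorphism $g$. Consequently there is $\tau\in\PGL_4(\CC)$ such that $\varphi:=\tau^{-1}\circ\chi$ belongs to $\Bir(\PP^3;S)$ and restricts to $g$ on $S$. Since $g$ generates $\Aut(S)\cong\ZZ_2$, the restriction homomorphism $\Bir(\PP^3;S)\to\Aut(S)$ is surjective.

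I expect the main difficulty to be the second step: establishing that $\chi$ truly restricts to an \emph{isomorphism} onto a \emph{smooth} quartic $S'$ (rather than merely a birational map to a possibly singular quartic), together with pinning down the pullback of the hyperplane class on the nose. The volume-preserving formalism combined with uniqueness of minimal models of K3 surfaces yields the birational statement, and ampleness of $g^*H$ upgrades it to an isomorphism; the role of the numerical coincidence between the matrices in Remark~\ref{rem:links} and Proposition~\ref{prop:boldcurvesasgenerator} is precisely to guarantee that the Cremona transformation produced restricts to the nontrivial automorphism rather than to the identity. For $r=41$ this link is the one analyzed in detail in \cite[Proposition~3.1 and Remark~3.2]{ZikRigid}, and the case $r=17$ is entirely analogous.
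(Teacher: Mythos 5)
Your overall route is the same as the paper's: blow up the curve $C$ from Proposition~\ref{prop:boldcurvesasgenerator}, run the link of Remark~\ref{rem:links}, observe that the matrix of $\phi^*$ coincides with that of the generator of $\Aut(S)$, and conclude via Lemma~\ref{lem:projEquivalent} after composing with a linear automorphism. The difference lies in how you show that $\chi$ restricts to an isomorphism of $S$ onto a smooth quartic with the right polarization, and it is exactly there that your argument has a gap. The assertion that, under the identification of $S$ with the minimal resolution of $S'$, the pullback of the hyperplane class of $Y$ equals $(\phi^*H^+)|_{\tilde S}=aH-bC$ is not automatic: it presupposes that the flop $\phi$ is an isomorphism in a neighbourhood of $\tilde S$. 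If a flopped curve $\gamma$ were contained in $\tilde S$, the induced identification of $\tilde S$ with $S^+$ (through the contraction of the $(-2)$-curve $\gamma$, resp.\ $\gamma^+$) would send $\gamma$ to $\gamma^+$, so the pullback of $H^+|_{S^+}$ would intersect $\gamma$ in $H^+\cdot\gamma^+\geq 0$, whereas $(\phi^*H^+)|_{\tilde S}\cdot\gamma=\phi^*H^+\cdot\gamma$ is computed on the other side of the flop and in general differs (intersection numbers with flopped curves change under $\phi$). In other words, the restriction of the strict-transform class and the pullback under the induced surface map can differ by a correction supported on flopped curves lying in $\tilde S$, so your key class computation is asserted rather than proved; without it neither the ampleness upgrade nor the application of Lemma~\ref{lem:projEquivalent} with the automorphism $g$ (rather than the identity) goes through.

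The missing step is precisely the paper's argument: any flopped curve $\gamma$ satisfies $\tilde S\cdot\gamma=-K_X\cdot\gamma=0$, while $-K_X|_{\tilde S}$ corresponds to $4H-C$, which is ample by Remark~\ref{rem:ample}; hence $\gamma\not\subset\tilde S$, and then $\tilde S\cdot\gamma=0$ forces $\gamma\cap\tilde S=\emptyset$. Thus $\phi$ is an isomorphism near $\tilde S$, the identity $(\phi^*H^+)|_{\tilde S}=aH-bC=g^*H$ is legitimate, and the rest of your argument (ampleness of $g^*H$ making the map to $S'$ finite, hence an isomorphism onto a smooth quartic, then Lemma~\ref{lem:projEquivalent}) works. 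Note that you invoke ampleness only for $g^*H$ and never for $4H-C$, so this step is genuinely absent from your write-up. Once it is inserted, your volume-preserving/du Val/minimal-resolution detour is correct but unnecessary: as in the paper, disjointness from the flopped locus together with $S^+\cdot e^+=1$ for the fibres of $p^+$ already gives directly that $\chi|_S$ is an isomorphism onto a smooth quartic.
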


	\begin{proof}
		We first treat the case $r=17$. 
		By Proposition \ref{prop:boldcurvesasgenerator}, $\Pic(S)=\ZZ H\oplus\ZZ C$, where $C$ is a smooth curve of genus and degree $(14,11)$.   
		Denote by $X\rightarrow\PP^3$ the blowup of $\PP^3$ along $C$, and by $\tilde S\subset X$ the strict transform of $S$. 
		By Lemma \ref{lem:generalityIsAlwaysSatisfied} and Remark \ref{rem:links}, $X \to \PP^3$ initiates a Sarkisov link $\chi\colon \PP^3 \rmap \PP^3$.
		Using the notation of Remark \ref{rem:links}, we first prove that $\chi$ restricts to an isomorphism on $S$.
		Indeed, the restriction of $p$ to $\tilde S$ is clearly an isomorphism onto $S$.
		Moreover, for any curve $\gamma\subset X$ flopped by $\phi$, we have $\tilde S\cdot \gamma = -K_X\cdot \gamma = 0$.
		Since $-K_X|_{\tilde S} = 4H-C$ is ample on $\tilde S$ by Remark \ref{rem:ample}, $\gamma$ must be disjoint from $\tilde S$.
		Since $\phi$ preserves anti-canonical sections, the class of $S^+=\phi(\tilde S)$ on $X^+$ is $4H^+ - E^+$.
		So, for any curve $e^+\subset X^+$ contracted by $p^+$, $S^+ \cdot e^+ = 1$, and so $p^+$ restricts to an isomorphism on $S^+$.
		We thus conclude that $\chi|_S \colon S \rmap \chi(S)$ is an isomorphism.
		
		By Remark \ref{rem:links}, in terms of the bases $\{H^+,E^+\}$ for $N^1(X^+)$ and $\{H,E\}$ for $N^1(X)$,  the isomorphism $\phi^*\colon N^1(X^+) \to N^1(X)$ is given by the matrix 
		\begin{equation*}
			\phi^*=
			\left(
			\begin{array}{rr}
				19 & 72 \\ 
				-5 & -19
			\end{array}
			\right).
		\end{equation*}
		Notice that this is the same matrix as the one corresponding to the generator $\tau$ of $\Aut^\pm(S)$ in Table \eqref{table:matricesOfGenerators}, and $\Aut(S)=\Aut^\pm(S)$ by Proposition~\ref{prop:possibilitiesForAut}.
		In particular, $\tau^*H^+ = H$ and thus, by Lemma~\ref{lem:projEquivalent}, $\chi(S)$ is projectively equivalent to $S$.
		Up to composing it with an automorphism of $\PP^3$, we may assume that $\chi \in \Bir(\PP^3;S)$ and, by Corollary \ref{cor:actionDetermByPic}\eqref{it:actionDetermByPic2}, $\chi|_S = \tau$. 
		This proves that  the restriction homomorphism $\Bir(\PP^3;S) \to \Aut(S) = \langle \tau \rangle$ is surjective.
		
		For $r = 41$, we pick $C\subset S$ a smooth curve of genus and degree $(6,9)$, and follow the exact same argument.
		The numerics for the corresponding link are given again in Remark \ref{rem:links}.
	\end{proof}

	\begin{prop}[$\ZZ_2 \ast \ZZ_2$-case]\label{prop:realizationZ2*Z2}
		Let $S$ be a smooth quartic surface with $\rho(S) = 2$ and discriminant $r \in\{28, 56\}$.
		Then $\Aut^{\pm}(S)  \isom \ZZ_2 \ast \ZZ_2$ is contained in the image of the restriction homomorphism $\Bir(\PP^3;S) \to \Aut(S)$.
	\end{prop}
	
	\begin{proof}
		We first treat the case $r=28$. 
		By Proposition \ref{prop:boldcurvesasgenerator}, $\Pic(S)=\ZZ H\oplus\ZZ C_1$, where $C_1$ is a smooth curve of genus and degree $(10,10)$.
		Moreover,  
		$\Aut^{\pm}(S) \isom \ZZ_2 \ast \ZZ_2$ is generated by two involutions $\tau_1$ and $\tau_2$ that act on $\Pic(S)$ as
		\[
		\tau_1^* = 
		\left(
		\begin{array}{rr}
			23 &  88 \\
			-6 & -23
		\end{array}
		\right) \ \ \text{ and } \ \
		\tau_2^* =
		\left(
		\begin{array}{rr}
			-7 &  -8 \\
			6 & 7
		\end{array}
		\right)
		\]   
		with respect to the basis $\{H,C_1\}$.
		
		Denote by $X_1$ the blowup of $\PP^3$ along $C_1$. 
		By Lemma \ref{lem:generalityIsAlwaysSatisfied} and Remark \ref{rem:links}, $X_1 \to \PP^3$ initiates a Sarkisov link $\chi_1\colon \PP^3 \rmap \PP^3$.    
		Arguing exactly as in the proof of Proposition \ref{prop:realizationZ2}, we see that $\chi_1$ restricts to an isomorphism on $S$ and, after composing it with an automorphism of $\PP^3$, we may assume that $\chi_1(S)=S$ 
		and $\chi_1|_S = \tau_1$.
		
		As for the second generator $\tau_2$ of $\Aut^\pm(S)$, let $C_2$ be a smooth element of $|5H-C_1|$.
		Then $C_2$ is also of genus and degree $(10,10)$, and so the blowup $X_2\to \PP^3$ along $C_2$ again initiates a Sarkisov link $\chi_2 \colon \PP^3 \rmap \PP^3$. As before, after composing it with an automorphism of $\PP^3$, we may assume that $\chi_2(S) = S$, and the induced automorphism on $S$ acts on $\Pic(S)$ as
		\[
		\left(
		\begin{array}{rr}
			23 &  88 \\
			-6 & -23
		\end{array}
		\right)
		\]
		with respect to the basis $\{H,C_2\}$.
		Changing the basis to $\{H,C_1\}$, we get
		\[
		{\chi_2|}_S^* 
		=
		\left(
		\begin{array}{rr}
			1 &  5 \\
			0 & -1
		\end{array}
		\right)
		\left(
		\begin{array}{rr}
			23 &  88 \\
			-6 & -23
		\end{array}
		\right)
		\left(
		\begin{array}{rr}
			1 &  5 \\
			0 & -1
		\end{array}
		\right)^{-1}
		=
		\left(
		\begin{array}{rr}
			-7 &  -8 \\
			6 & 7
		\end{array}
		\right).
		\]
		So, by Corollary \ref{cor:actionDetermByPic}\eqref{it:actionDetermByPic2}, $\chi_i|_S = \tau_i$ for $i = 1,2$, and 
		thus $\Aut^{\pm} = \langle \tau_1, \tau_2 \rangle$ is contained in the image of the restriction homomorphism $\Bir(\PP^3;S) \to \Aut(S)$.

		The case $r = 56$ is analogous:
		we choose $C_1$ to be a smooth curve of genus and degree $(2,8)$ and $C_2$ a smooth element of $|4H - C_1|$.
		Then $C_2$ is also a curve of genus and degree $(2,8)$, and the construction above works verbatim.
	\end{proof}
	
	\begin{rem}
		The four links described in the proofs of Propositions~\ref{prop:realizationZ2} and \ref{prop:realizationZ2*Z2}, initiated by the blowup of $\PP^3$ along smooth curves of genus and degree $(14,11)$, $(6,9)$, $(10,10)$ and $(2,8)$, 
		were described in detail in \cite[Proposition 3.1 and Remark 3.2]{ZikRigid}. Each one is 
		a birational involution $\chi\colon \PP^3\dasharrow \PP^3$  fitting into a commutative diagram:
		\[
		\xymatrix@R=.3cm{
			X \ar[dd]_p \ar[rd] \ar@{..>}^{\phi}[rr] && X \ar[ld] \ar[dd]^p\\
			& Z \ar@(dl,dr)^{\alpha}\\
			\PP^3 \ar@{-->}_{\chi}[rr] && \ \PP^3 \ ,
		}
		\]
		where $\phi\colon X\psmap X$ is a flop, the anti-canonical model $Z$ of $X$ is a double cover of $\PP^3$ ramified along a sextic hypersurface, and $\alpha\colon Z\to Z$ is the deck transformation of $Z$ over $\PP^3$.
	\end{rem}

	\begin{prop}[$\ZZ$-case]\label{prop:realizationZ}
		Let $S$ be a smooth quartic surface with $\rho(S) = 2$ and discriminant $r \in\{20, 32, 40, 48\}$.
		Then $\Aut^{\pm}(S) \isom \ZZ$ is contained in the image of the restriction homomorphism $\Bir(\PP^3;S) \to \Aut(S)$.
	\end{prop}
	
	\begin{proof}
		We first treat the case $r = 20$.
		By Proposition \ref{prop:boldcurvesasgenerator}, there is a smooth curve $C_1\subset S$ of genus and degree $(11,10)$ such that $\Pic(S) = \ZZ H \oplus \ZZ C_1$.
		By Lemma \ref{lem:generalityIsAlwaysSatisfied} and Remark \ref{rem:links}, the blowup $X_1 \to \PP^3$ along $C_1$ initiates a Sarkisov link $\chi_1\colon \PP^3 \dasharrow \PP^3$. 
		Following the notation introduced in Remark \ref{rem:links}, we denote by $C_1^+\subset \PP^3$  the center of the blowup $p_1^+\colon X_1^+\to \PP^3$, and set $S_1\defeq \chi_1(S)\subset \PP^3$.
		Arguing as in the proof of Proposition \ref{prop:realizationZ2}, we see that ${\chi_1}|_S\colon S \to S_1$ is an isomorphism.    
		Set $\sigma_1\defeq{\chi_1}|_{S}\colon S \to S_1$.
		By Remark \ref{rem:links}, with respect to the bases $\{H^+,C_1^+\}$ of $\Pic(S_1)$ and $\{H,C_1\}$ of $\Pic(S)$, $\sigma_1^*$ takes the form
		\[
		\sigma_1^* =
		\left(
		\begin{array}{rr}
			11 &  40 \\
			-3 & -11
		\end{array}
		\right).
		\]
		Using Lemma~\ref{lem:projEquivalent}, one can check that $S$ and $S_1$ are not projectively equivalent.
		
		We will now perform a second link.    
		Let $C_2\subset S_1$ be a smooth element of the linear system $|4H^+ - C_1^+|$.
		Then $C_2$ is a curve of genus and degree $(3,6)$.
		Denote by $p_2\colon X_2 \to \PP^3$ the blowup of $\PP^3$ along $C_2$.
		By Lemma \ref{lem:generalityIsAlwaysSatisfied} and Remark \ref{rem:links}, it initiates a Sarkisov link $\chi_2 \colon \PP^3 \rmap \PP^3$.
		Once again, we follow the notation introduced in Remark \ref{rem:links}. We denote by $E_2^+\subset X_2^+$  the exceptional divisor of the blowup $p_2^+\colon X_2^+\to \PP^3$, by $C_2^+\defeq p_2^+(E_2^+)\subset \PP^3$ its center, and set $S_2\defeq \chi_2(S_1)\subset \PP^3$.
		Arguing as in the proof of Proposition \ref{prop:realizationZ2}, we see that the restriction $\chi_2|_{S_1}\colon S_1 \to S_2$ is an isomorphism.
		Set $\sigma_2\defeq{\chi_2}|_{S_1}\colon S_1 \to S_2$, and write $A$ and $A^+$ for the hyperplane classes of $S_1$ and $S_2$, respectively.
		By Remark \ref{rem:links}, with respect to the bases $\{A^+,C_2^+\}$ of $\Pic(S_2)$ and $\{A,C_2\}$ of $\Pic(S_1)$, $\sigma_2^*$ takes the form
		\[
		\sigma_2^* = 
		\left(
		\begin{array}{rr}
			3 &  8 \\
			-1 & -3
		\end{array}
		\right).
		\]
		A smooth element $C'\subset S_2$ in the linear system $|4A^+ - C_2^+|$ is a curve of genus and degree $(11,10)$.
		Computing the matrix of the composition $(\sigma_2\circ \sigma_1)^*\colon \Pic(S_2) \to \Pic(S)$ with respect to the bases $\{A^+,C'\}$ of $\Pic(S_2)$ and $\{H,C_1\}$ of $\Pic(S)$, we get:
		\[
		(\sigma_2\circ \sigma_1)^* = 
		\left(
		\begin{array}{rr}
			11 &  40 \\
			-3 & -11
		\end{array}
		\right)
		\left(
		\begin{array}{rr}
			1 &  4 \\
			0 & -1
		\end{array}
		\right)
		\left(
		\begin{array}{rr}
			3 &  8 \\
			-1 & -3
		\end{array}
		\right)
		\left(
		\begin{array}{rr}
			1 &  4 \\
			0 & -1
		\end{array}
		\right)^{-1}
		= 
		\left(
		\begin{array}{rr}
			29 &  40 \\
			-8 & -11
		\end{array}
		\right).
		\]
		Notice that this is the same matrix as the one corresponding to the generator of $\Aut^\pm(S)$ in Table \eqref{table:matricesOfGenerators}.
		By Lemma \ref{lem:projEquivalent},  after composing it with an automorphism of $\PP^3$, we may assume that $(\chi_2\circ \chi_1)(S) = S$ and, by Corollary \ref{cor:actionDetermByPic}\eqref{it:actionDetermByPic2}, the restriction $(\chi_2\circ \chi_1)|_S$ generates $\Aut^{\pm}(S)$.
		
		The cases $r = 32, 40, 48$ are analogous: 
		the birational map $\chi\colon \PP^3 \rmap \PP^3$ that stabilizes $S$ and generates $\Aut^{\pm}(S)\isom \ZZ$ is always the composition of two Sarkisov links $\chi_1$ and $\chi_2$. The first Sarkisov link $\chi_1\colon \PP^3 \rmap Y$ is initiated by the blowup of $\PP^3$ along a smooth curve $C_1\subset S$ of genus and degree $(g,d)$ indicated in the table below, while the second Sarkisov link $\chi_2\colon Y \rmap \PP^3$ is initiated by the blowup of $Y$ along a smooth smooth curve $C_2 \subset \chi_1(S)$ such that $C_2 \sim \alpha H^+ + \beta C_1^+$ for suitable integers $\alpha$ and $\beta$ listed in the table below. 
		\[
		\arraycolsep=7pt \def\arraystretch{1.5}
		\begin{array}{|c|c|c|c|c|c|}
			\hline \rowcolor{gray!25}
			r & C_1 &\chi_1 & (\alpha,\beta) & C_2 & \chi_2 \\
			\hline
			32 & (5,8) & \PP^3 \rmap \PP^3 & (4,-1) & (5,8) & \PP^3 \rmap \PP^3\\
			\hline \rowcolor{gray!7}
			40 & (4,8) & \PP^3 \rmap X_5 & (2,-1) & (4,10) & X_5 \rmap \PP^3\\
			\hline
			48 & (3,8) & \PP^3 \rmap \PP^3 & (4,-1) & (3,8) & \PP^3 \rmap \PP^3 \\
			\hline
		\end{array} 
		\]
	\end{proof}
	
	We are now ready to prove our main theorem:

	\begin{proof}[{Proof of Theorem~\ref{introthm}}]
		First suppose that $r > 57$ or $r = 52$.
		By Corollary \ref{cor:noRealizationForr>57}, $\Bir(\PP^3;S)=\Aut(\PP^3;S)$ and by Corollary~\ref{cor:actionDetermByPic}(3), $\Aut(\PP^3;S)=\{id\}$. 
		
		If $r \leq 57$, $r \neq 52$ and $\Aut(S) \neq \{id\}$, then $r \in \{17, 41\} \cup \{28, 56\} \cup \{20, 32, 40, 48\}$ by Proposition~\ref{prop:Autforrleq57}.
		For $r$ in each one of these three sets, we conclude by Propositions \ref{prop:realizationZ2}, \ref{prop:realizationZ2*Z2} and \ref{prop:realizationZ}, respectively.
	\end{proof}

\end{document}